\numberwithin{equation}{section}
\theoremstyle{plain}
\newtheorem{theorem}[equation]{Theorem}
\newtheorem{proposition}[equation]{Proposition}
\newtheorem{lemma}[equation]{Lemma}
\newtheorem{corollary}[equation]{Corollary}
\theoremstyle{remark}
\newtheorem{remark}[equation]{Remark}
\theoremstyle{definition}
\newtheorem{definition}[equation]{Definition}
\newtheorem{example}[equation]{Example}
\newcommand{\lra}{\longrightarrow}
\newcommand{\ra}{\to}
\newcommand{\C}{{\mathcal C}}
\newcommand{\D}{{\mathcal D}}
\newcommand{\G}{{\mathcal G}}
\newcommand{\I}{{\mathcal I}}
\newcommand{\K}{{\mathcal K}}
\renewcommand{\L}{{\mathcal L}}
\newcommand{\N}{\mathbb N}
\renewcommand{\P}{{\mathcal P}}
\newcommand{\R}{{\mathbb R}}
\newcommand{\U}{{\mathcal U}}
\newcommand{\Z}{\mathbb Z}
\newcommand{\Lip}{\operatorname{Lip}}
\newcommand{\Star}{\operatorname{St}}
\newcommand{\halfstar}{\operatorname{St}_{\frac12}}
\def\De{\Delta}
\def\ga{\gamma}
\def\Ga{\Gamma}
\def\lra{\longrightarrow}
\def\ra{\to}
\def\si{\sigma}
\def\Si{\Sigma}
\def\be{\beta}
\def\th{\theta}
\def\defeq{:=}
\newcommand{\ol}{\overline}
\newcommand{\no}{\noindent}
\def\Xint#1{\mathchoice
    {\XXint\displaystyle\textstyle{#1}}%
    {\XXint\textstyle\scriptstyle{#1}}%
    {\XXint\scriptstyle\scriptscriptstyle{#1}}%
    {\XXint\scriptscriptstyle\scriptscriptstyle{#1}}%
    \!\int}
\def\XXint#1#2#3{{\setbox0=\hbox{$#1{#2#3}{\int}$}
      \vcenter{\hbox{$#2#3$}}\kern-.5\wd0}}
\def\av{\Xint-}
\begin{document}

\begin{abstract}
We give conditions on Gromov-Hausdorff convergent inverse systems 
of metric measure graphs 
which imply that the measured Gromov-Hausdorff limit
(equivalently, the inverse limit)
% can be equipped with a natural measure with respect to which it
 is a PI space i.e., it satisfies   a doubling condition and
 a Poincar\'e  inequality in the sense of Heinonen-Koskela \cite{HeKo}. The Poincar\'e inequality
is actually of type $(1,1)$.  We also give a systematic construction of
examples for which our conditions are satisfied.
 Included are known examples of 
PI spaces, such as Laakso spaces, and a large class of new examples. 

As follows easily from \cite{ckdppi}, generically our examples have
the property that they do not bilipschitz embed in any Banach space with
Radon-Nikodym property. For Laakso spaces, this was noted in \cite{ckdppi}.
However according to \cite{ckellone} these spaces 
admit a bilipschitz embedding in $L_1$. For Laakso spaces, this 
%The fact that Laakso spaces bi-Lipschitz embed in $L_1$ 
was announced in \cite{ckbv}.
\end{abstract}

\title[Inverse limits 
satisfying a Poincar\'e inequality]{Inverse limit spaces satisfying a Poincar\'e inequality}
 
\date{\today}
\author{Jeff Cheeger and Bruce Kleiner}
\thanks{J.C. was supported by NSF Grant DMS-1005552 and by a Simons Fellowship,
and B.K. was supported by NSF Grant DMS-1105656.}

\maketitle

{\small\tableofcontents}

\section{Introduction}
\label{intro}

This paper is part of a series concerning bilipschitz embeddability and
PI spaces, i.e. metric measure spaces which
satisfy a doubling condition and a Poincar\'e inequality; 
\cite{crannouncement},
\cite{GFDA},  \cite{ckdppi}, \cite{CKN}, \cite{ckbv},
\cite{ckmetmonellone}, \cite{MR2892612}, \cite{ckellone}. 
In this paper we give a systematic construction of 
PI spaces as 
 inverse limits, or equivalently Gromov-Hausdorff limits, 
 of certain inverse systems of metric measure graphs which we term ``admissible''
 (see Section \ref{prelim} for the definition).  Included are known examples of 
PI spaces, such as Laakso spaces (\cite{laakso}) and a large class of new examples. 

Our main result is:

\begin{theorem}
\label{t:pi}
The  measured Gromov-Hausdorff
limit  of an admissible inverse system 
is a PI space satisfying 
a $(1,1)$-Poincar\'e inequality. Moreover, the doubling constant $\beta$ and the constants $\tau,\Lambda$
in the Poincar\'e inequality depend only on the constants
$2\leq m\in \N$, $\De,\th,C\in (0,\infty)$ in conditions (1)--(6)
for admissible inverse systems.
\end{theorem}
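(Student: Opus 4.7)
The plan is to verify the two pillars of being a PI space separately---doubling of the limit measure and the $(1,1)$-Poincar\'e inequality---working directly on the inverse limit $X_\infty$ via the projections $\pi_i\colon X_\infty \to X_i$. A convenient characterization of the Poincar\'e inequality is the thick-curve-family (or ``pencil of curves'') criterion of Heinonen--Koskela--Semmes--Keith: in a doubling space, it suffices to exhibit, for every pair $x,y$, a probability measure supported on rectifiable curves from $x$ to $y$ of length $\lesssim d(x,y)$, whose length-measure push-forward is controlled pointwise by a constant multiple of the ambient measure on $X_\infty$.

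For doubling, I would choose, for each scale $r>0$, a level $i = i(r)$ whose mesh is comparable to $r$. The admissibility conditions---bounded branching via the integer $m$, comparisons of diameters via $\De$ and $\th$, and a measure-compatibility constant $C$---should give uniform two-sided bounds on the number and weighted mass of preimages in $X_{i+1}$ of a vertex or edge of $X_i$. Iterating across the finitely many levels relating scales $2r$ and $r$, and passing to the inverse limit, yields a doubling constant $\beta$ depending only on $m,\De,\th,C$.

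For the Poincar\'e inequality, the key feature of admissibility should be that each $X_i$ carries many combinatorially parallel paths between its branch points; this is what distinguishes these inverse systems from the degenerate one-dimensional case (an interval, which trivially satisfies $(1,1)$-Poincar\'e but has no thick curve family at the native scale, and whose inverse-limit cousins need not inherit any nontrivial Poincar\'e inequality). Given $x,y\in X_\infty$ with $d(x,y)\approx r$, choose $i$ with mesh $\approx r$, take the family of parallel paths in $X_i$ joining $\pi_i(x)$ to $\pi_i(y)$, and lift it \emph{level by level} through the inverse system. Admissibility is tailored so that at each stage a single path in $X_j$ admits a controlled distribution of lifts to $X_{j+1}$---the number of lifts bounded in terms of $m$ and their weighted length controlled in terms of $\De$, $\th$. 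A weak-$*$ limit in the space of probability measures on $C([0,1],X_\infty)$ produces the required probability measure on curves from $x$ to $y$, and the measure-compatibility condition (the constant $C$) supplies the pointwise fattening/overlap bound. Feeding this into the thick-curve-family criterion gives the $(1,1)$-Poincar\'e inequality with $\tau,\La$ depending only on $m,\De,\th,C$.

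The hard part will be the curve-lifting and limit step. Upper gradients behave badly under the collapsing projections $\pi_i$, so one cannot simply pull the Poincar\'e inequality back from a single finite level. Instead one must set up a quantitatively consistent lifting scheme across all levels, verify that the resulting sequence of probability measures on path spaces is tight and that its weak-$*$ limit is supported on genuinely rectifiable curves of controlled length, and check that the fattening/overlap estimate survives the limit with constants independent of $i$. Once this uniform curve-lifting is in place, the remaining deductions---applying Keith's criterion to conclude $(1,1)$-Poincar\'e, tracking the dependence of $\tau$ and $\La$ on $m,\De,\th,C$, and identifying the measured Gromov--Hausdorff limit with the inverse limit---are routine.
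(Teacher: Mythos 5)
Your doubling argument matches the paper's: push the measure of balls down to the level $j$ with $m^{-j}\approx r$ using Axioms (3) and (5), and apply local doubling there; that part is fine. The Poincar\'e half, however, has a genuine gap at exactly the point you flag as ``the hard part,'' and the gap is not merely technical. Your plan is to lift a path in $X_j$ level by level with ``a controlled distribution of lifts'' and to get the pointwise fattening bound from the measure-compatibility constant $C$ of Axiom (4). But Axiom (4) only controls ratios of measures of \emph{adjacent} edges; if you build the lift distribution as a Markov chain using only Axioms (1)--(5), the push-forward of the resulting measure onto $X_i$ is comparable to $\mu_i$ only edge by edge, with constants that compound over the roughly $m^{i-j}$ edges of the subdivided path, so the overlap estimate degenerates as $i\to\infty$ and the weak-$*$ limit carries no useful bound. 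The ingredient you are missing is Axiom (6), the continuity condition: it is precisely what makes the natural measure $\Omega$ on lifts invariant under orientation reversal, and hence makes $\Phi_*(\Omega\times\mathcal L)$ an \emph{exact} constant multiple of $\mu_i$ on the preimage of each edge (Propositions \ref{meassym} and \ref{pfml}). Without that identity there is no uniform thick-curve-family bound; the paper stresses that Axiom (6) is the one axiom that enters crucially in both of its proofs, and your proposal never uses it.

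A second, structural difference: even granting the exact push-forward identity, the lifted families join fibers to fibers, not points to points, so producing a Semmes-type pencil between arbitrary $x,y\in X_\infty$ still requires a chaining construction across all scales plus a tightness and rectifiability argument for the weak-$*$ limit, none of which you supply. The paper sidesteps this entirely: it never builds curves in $X_\infty$. Instead it proves a uniform $(1,1)$-Poincar\'e inequality on each finite level $X_i$ --- either by averaging functions down the tower via the continuous fuzzy sections furnished by Axiom (6) and telescoping (Section \ref{s:piproof}), or by using $\Omega$ only to compare averages of $u$ over preimages of adjacent edges and telescoping (Section \ref{s:p2}) --- and then invokes the stability of doubling and the Poincar\'e inequality under measured Gromov--Hausdorff limits. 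If you want to salvage your route, the minimal fix is to (i) invoke Axiom (6) to obtain the exact push-forward identity for $\Omega$, and (ii) replace the point-to-point pencil by the finite-level comparison of edge-preimage averages followed by a telescoping sum.
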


The  limit spaces have analytic dimension 1,
topological dimension 1 and except in certain ``degenerate'' cases, Hausdorff dimension $>1$.
It  follows from \cite{ckellone} that the spaces we construct admit bilipschitz embeddings
in $L_1$.  For Laakso spaces, this 
%The fact that Laakso spaces bi-Lipschitz embed in $L_1$ 
was announced in \cite{ckbv}.
However, except in the degenerate cases, they do not bilipschitz
embed in any Banach space with the Radon-Nikodym Property.
For Laakso spaces, this was noted in \cite{ckdppi}.

One of the novelties in this paper is a new approach to proving the Poincar\'e inequality
that exploits the fact that the metric measure space is the limit of 
an inverse system 
$$
X_0\stackrel{\pi_0}{\longleftarrow}\cdots \stackrel{\pi_{i-1}}{\longleftarrow}X_{i}
\stackrel{\pi_{i}}{\longleftarrow}
\cdots\,.
$$
The argument, which is by induction, 
involves averaging a function on $X_{i+1}$
over the fibers of the projection 
map $\pi_i:X_{i+1}\ra X_i$, to produce a function on $X_i$.  The averaging operator is
defined by specifying, for each $x\in X_i$,  a probability measure $\D_i(x)$ 
supported on the fiber $\pi_i^{-1}(x)\subset X_{i+1}$; for a generic point $x\in X_i$, the 
choice of $\D_i(x)$ is canonical.  The key point is that under
a certain condition (see Axiom (6) from Definition \ref{admissible})
this canonical
assignment extends to one that is continuous with respect to the 
weak topology on Radon measures, and that is compatible with the operation of
taking upper gradients.  This new proof of the Poincar\'e inequality is robust and
applies verbatim to certain
higher dimensional inverse systems.

\subsection*{Organization of the paper}
 
In  Section \ref{prelim}, after we recall some standard material, we 
state the six axioms which define admissible inverse systems, discuss the  
role of the  axioms, and draw some simple consequences.  
Among a number of other things,
we show in Corollary \ref{cor_top_dim_1} that the topological dimension of the 
inverse limit
is $1$.

In Section \ref{s:blg}, for each $X_i$,
we verify, with uniform constants, the  Poincar\'e inequality locally
at the scale associated with $X_i$, as well as  the (global)
doubling condition. 

In Section \ref{spi},
the last three axioms are reformulated in terms of what we call 
``continuous fuzzy sections''
of the maps $\pi_i: X_{i+1}\to X_i$ of our inverse system.
This reformulation plays a role in several places in the paper.

In Section \ref{s:piproof}, using the continuous fuzzy sections,
 we prove that the $X_i$'s satisfy a uniform Poincar\'e inequality; this  
implies 
that the Gromov-Hausdorff limit $X_\infty$ has a Poincar\'e inequality (\cite{cheeger,keith})
thereby proving Theorem \ref{t:pi}.

In Section Section \ref{mlp} we construct a natural probability measure on the  family of paths
in $X_k$ which are lifts of some fixed path in $X_j$ ($j<k$).

In Section \ref{s:p2}, we give a  second, essentially different, 
proof of the Poincar\'e inequality for $X_\infty$ using the probability measure on path families.

In Section \ref{eym} we show how to construct large  families of examples of admissible 
inverse systems.  The construction produces a sequence of partial inverse systems
$$
X_0\stackrel{\pi_0}{\longleftarrow}\cdots \stackrel{\pi_{i-1}}{\longleftarrow}X_{i}
$$
by induction on $i$; in the  inductive step, roughly speaking, one makes independent choices 
locally in $X_i$ to produce $X_{i+1}$.
Both fuzzy sections and the
measure on path families  play a role in the discussion.

In Section \ref{s:ad} we show that for an admissible inverse 
system, the 
cotangent bundle of the limit has dimension $1$.

In Section  \ref{s:blnernp}, we show that except in degenerate cases, limits of 
admissible systems do
not bilipschitz embed in any Banach space with the Radon-Nikodym Property.

  In Section \ref{higher} we  briefly indicate how our previous discussion
can be extended to 
certain higher dimensional inverse systems. In this case,
 depending which building blocks one uses, for example the Heisenberg group 
  with its Carnot-Caratheodory metric,
   the resulting inverse limit spaces need not bilipschitz embed in $L_1$.

\section{Preliminaries}
\label{prelim}

In this section we begin by collecting some standard definitions.  Then we give the axioms
for an admissible inverse
system, briefly indicate the role of  each of the axioms and observe some elementary consequences.

\subsection{The doubling condition and the Poincar\'e inequality}

We now recall some relevant definitions.
Let $(X,d,\mu)$ denote a metric measure space, with
$\mu$ a Borel  measure on $X$, which is finite and nonzero
on metric balls $B_r(x)$ if $0<r<\infty$.

For $U$ measurable, we set
\begin{equation}
\label{av2}
f_U=\frac{1}{\mu(U)}\int_Uf\, d\mu\, .
\end{equation}

The measure $\mu$ is said to satisfy
a {\bf doubling condition} if there exists $\beta=\beta(R)$ such that for all $x\in X$ 
\begin{equation}
\label{doubling}
\mu(B_{2r}(x))\leq\beta\cdot\mu(B_r(x))\, , \qquad(r\leq R)\, .
\end{equation}

If $(X,d)$ is a metric space, $f:X\to\R$ and a nonnegative Borel function  $g:X\to \R_+$, we say that $g$ is an
{\bf upper gradient} for $f$ if for all rectifiable curves $c: [0,L]\to X$ parameterized by
arclength,
\begin{equation}
\label{igdf}
|f(c(L))-f(c(0))|\leq \int_0^Lg(c(s))\, ds\, .
\end{equation}

We say that $(X,d,\mu)$ {\bf satisfies a $(1,p)$-Poincar\'e inequality}
 if for some
$\Lambda$ and $\tau=\tau(R)$, we have for every bounded
continuous function $f$ and every upper
gradient $g$,
\begin{equation}
\label{11pi1}
\int_{B_r(x)}|f-f_{B_r(x)}|\, d\mu\leq \tau r\cdot
\left(\int_{B_{\Lambda r}(x)}(g)^p\, d\mu\right)^{\frac{1}{p}}
\qquad
(r\leq R)\, .
\end{equation}
This definition and the definition  of upper gradient are
due to Heinonen-Koskela \cite{HeKo}.

It was shown in    \cite[Theorem 1.3.4]{keith} that $(X,d,\mu)$
satisfies a $(1,p)$-Poincar\'e inequality if and only if for every Lipschitz function $f$,
\begin{equation}
\label{11pi}
\int_{B_r(x)}|f-f_{B_r(x)}|\, d\mu\leq \tau r\cdot
\left(\int_{B_{\Lambda r}(x)}(\Lip\, f(x))^p\, d\mu\right)^{\frac{1}{p}}
\qquad
(r\leq R)\,,
\end{equation}
where $\Lip f$ denotes the pointwise Lipschitz constant of $f$:
$$
\Lip\, f(x)\defeq\limsup_{d(x',x)\to 0}\, 
\frac{|f(x')-f(x)|}{d(x',x)}\, \qquad (x'\ne x)\, .
$$

\begin{definition}
\label{dpi}
If (\ref{doubling}) and (\ref{11pi1}) hold, we say that $(X,d,\mu)$ is a PI space.
\end{definition}

\begin{remark}
The examples constructed in this paper will satisfy $p=1$, which is the
strongest version of the Poincar\'e inequality.
\end{remark}

\subsection{Axioms for admissible inverse systems}
\label{axioms}

We will  consider inverse systems of connected
 metric measure graphs, 
\begin{equation}
\label{is}
%\cdots\stackrel{\pi_{-i-1}}{\longleftarrow}X_{-i}\stackrel{\pi_{-i}}{\longleftarrow}
%\cdots\stackrel{\pi_{-1}}{\longleftarrow}
X_0\stackrel{\pi_0}{\longleftarrow}\cdots \stackrel{\pi_{i-1}}{\longleftarrow}X_{i}
\stackrel{\pi_{i}}{\longleftarrow}
\cdots\, .
\end{equation}
%  Our main result, Theorem \ref{t:pi}, asserts that  if in addition, we impose conditions
%  (1)--(6) below, then
%Theorem \ref{t:pi}
%, \ref{}, \ref{} asserts that 
%the inverse limit $(X,d_\infty,\mu_\infty)$ is a PI space.
%In our situation, it is easy to see that the inverse limit coincides with the measured
%Gromov-Hausdorff limit. 

Let $\Star(x,G)$ denote the star
of a vertex $x$ in a graph $G$, i.e. the union of the edges containing $x$.

We assume that each $X_i$ is connected 
and is equipped with a  path metric $d_i$ and a measure
$\mu_i$, such that the 
following conditions hold, for some constants
$2\leq m\in \Z$, $\De$, $\th$, $C\in (0,\infty)$ and
every $i\in \Z$\,:

\begin{enumerate}
\setlength{\itemsep}{.5ex}
\item   (Bounded local metric geometry) $(X_i,d_i)$ is a nonempty connected 
 graph with all vertices of valence $\leq \De$, and such that every 
edge of $ X_i$ is  isometric to an interval of length $m^{-i}$
with respect to the path metric $d_i$. 
\item (Simplicial projections are open)  If $X_i'$ denotes the  graph obtained by
subdividing each edge of $X_i$ into
$m$ edges of length $m^{-(i+1)}$, then $\pi_i$ induces a map
$\pi_i:(X_{i+1},d_{i+1})\ra (X_i',d_i)$ which is open, simplicial, and an isometry on every edge.
\item (Controlled fiber diameter)
For every  $x_i\in X_i'$, the inverse image $\pi_i^{-1}(x_i)\subset X_{i+1}$ 
has $d_{i+1}$-diameter  at most $\th \cdot m^{-(i+1)}$.  
\item (Bounded local metric measure geometry.)
The measure $\mu_i$ restricts to a constant multiple of arclength on each edge 
$e_i\subset X_i$, 
and  $\frac{\mu_i(e_{i,1})}{\mu_i(e_{i,2})}\in [C^{-1},C]$ for any two adjacent edges 
$e_{i,1},e_{i,2}\subset X_i$.
\item (Compatibility with projections)  
$$
(\pi_i)_*(\mu_{i+1})=\mu_i\, ,
$$
where $(\pi_i)_*(\mu_{i+1})$ denotes the pushforward of $\mu_{i+1}$ under $\pi_i$.
\item (Continuity)  For all vertices $v_i'\in X_i'$, and $v_{i+1}\in \pi_i^{-1}(v_i')$,
the quantity
%\subset X_{i+1}$ 
%the $\mu_{i+1}$-measure of $\pi_i^{-1}(e_i')\cap \Star(w,X_{i+1})$ is proportional to
%the $\mu_i$ measure of the edge $e_i'\subset \Star(v,X_i')$.  In other words,  for
%$v$ and $w$ as above, the quantity 
\begin{equation}
\label{con1}
\frac{\mu_{i+1}(\pi_i^{-1}(e_i')\cap \Star(v_{i+1},X_{i+1}))}{\mu_i(e_i')}
\end{equation}
is the same for all edges $e_i'\in \Star(v_i',X_i')$.   
%(Here $\Star(x,G)$ denotes the star
%of a vertex $x$ in a graph $G$, i.e. the union of the edges containing $x$.)
\vskip2mm

\end{enumerate}

\begin{definition}
\label{admissible}
An inverse system of  metric measure graphs as in (\ref{is})  is called {\bf admissible} 
if it satisfies (1)--(6).
\end{definition}
\vskip2mm

%\subsection{Main theorem for systems satisfying the axioms}

\subsection{Discussion  of the axioms and some elementary consequences}
 Let us give a brief indication of the relevant consequences of each of our axioms.
Note that the first three axioms deal only with the metric and not the measure.
Indeed, taken together, Axioms (1) and (2)  have the  following 
purely combinatorial content which is worth noting at the outset, since it helps
to picture the restricted class of inverse systems that we are considering.
\begin{proposition}
\label{1-1}
Let $\{v_i\}$ denote a compatible sequence of vertices, i.e. $v_i$ is a vertex of 
$X_i$ and  $\pi_i(v_{i+1})=v_i$, for all $i\geq 0$.
Then for all but at most $\Delta$ values of $i$, the restriction of the
 locally surjective map $\pi_{i}$ to the open star of $v_{i+1}$ 
is actually $1$-$1$.
\end{proposition}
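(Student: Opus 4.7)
The plan is to track the valence of the vertices $v_i$ along the compatible sequence and show this is a non-decreasing sequence bounded by $\Delta$, which forces stabilization. The map $\pi_i$ restricted to $\Star(v_{i+1},X_{i+1})$ will be $1$-$1$ precisely when the valence does not increase at step $i$.

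First, let me fix notation. Let $d_i$ denote the valence of $v_i$ in $X_i$; by Axiom (1) we have $d_i\leq \Delta$ for every $i\geq 0$. Since the subdivision operation $X_i\leadsto X_i'$ introduces new vertices only in the interiors of edges, the original vertex $v_i$ has the same valence in $X_i'$ as in $X_i$, namely $d_i$; equivalently, the edges of $\Star(v_i,X_i')$ are in bijection with the edges of $\Star(v_i,X_i)$ via ``initial segment.''

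Next, the key step: I want to show that $\pi_i$ maps $\Star(v_{i+1},X_{i+1})$ \emph{onto} $\Star(v_i,X_i')$. By Axiom (2), $\pi_i$ is simplicial and an isometry on each edge, so each edge of $\Star(v_{i+1},X_{i+1})$ has image a single edge of $\Star(v_i,X_i')$; hence $\pi_i(\Star(v_{i+1},X_{i+1}))$ is a union of edges of $\Star(v_i,X_i')$ (together with $v_i$ itself). Openness of $\pi_i$ (also Axiom (2)) then forces this union to contain every edge of $\Star(v_i,X_i')$: if some edge were missing, the image would fail to be open at $v_i$. Thus $\pi_i$ restricts to a surjection from a set of $d_{i+1}$ edges onto a set of $d_i$ edges, and in particular $d_i\leq d_{i+1}$.

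Finally, assembling: the sequence $(d_i)_{i\geq 0}$ is non-decreasing and takes values in $\{1,\ldots,\Delta\}$, so it can strictly increase at most $\Delta-1\leq \Delta$ times. For any index $i$ where $d_{i+1}=d_i$, the surjection $\pi_i|_{\Star(v_{i+1},X_{i+1})}\to \Star(v_i,X_i')$ on equinumerous edge sets must be a bijection on edges, and since $\pi_i$ is an isometry on each edge with matched vertex $v_{i+1}\mapsto v_i$, the restriction is injective. Hence for all but at most $\Delta$ indices $i$, the restriction of $\pi_i$ to $\Star(v_{i+1},X_{i+1})$ is $1$-$1$. I do not anticipate a serious obstacle here; the only mildly delicate point is the surjectivity argument, which needs both ``open'' and ``simplicial'' from Axiom (2) rather than either one alone.
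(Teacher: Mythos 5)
Your proof is correct and takes essentially the same route as the paper: the paper's (two-sentence) argument likewise observes that local surjectivity of $\pi_i$ forces the number of edges emanating from $v_i$ to be nondecreasing in $i$, and then invokes the uniform valence bound $\Delta$. Your write-up simply fills in the details the paper leaves implicit (why openness plus simpliciality gives surjectivity onto $\Star(v_i,X_i')$, and why equality of valences forces injectivity on the open star).
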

\begin{proof}
 From the local surjectivity of $\pi_{i}$ it follows that  the number of edges
emanating from $v_i$ is a nondecreasing function of $i$. Therefore, from the
uniform bound $\Delta$ on the degree of a vertex, of $X_i$, for all $i$, 
the proposition follows.
\end{proof}

Axiom (1) includes the statement that $\pi_i:X_{i+1}\to X'_i$
is a finite-to-one simplicial map.
This  implies that the vertices
of $X_{i+1}$ are precisely the inverse images of vertices of $X'_{i}$.
The second part of  Axiom (1) states that the restriction of $\pi_i$
to every edge is an isometry. In particular,
$\pi_{i}:(X_{i+1},d_{i+1})\to (X_{i},d_{i})$
is $1$-Lipschitz, i.e. distance nonincreasing. Axiom 
(1) also implies that for all $K>0$, if the ball in $X_i$ of radius $\leq K\cdot m^{-i}$
is rescaled to unit size,  then the  metric geometry
has a uniform bound depending on $K$ but independent of $i$.
\vskip2mm

 Axiom (2), stating that $\pi_{i}$ is open, implies
that if $c$ is a rectifiable path parameterized
by arc length and $\pi_{i}(x_{i+1})=c(0)$, then there exists a lift
 $\widetilde c$ parameterized by arc length, with $\widetilde c(0)=x_{i+1}$.
In general, $\widetilde c$ is not unique. By Axiom  (1), the paths 
$c$ and $\widetilde c$ have equal lengths and in addition,
for all $i\geq 0$, $x_{i+1}\in X_{i+1}$ and $r>0$, we have
\begin{equation}
\label{surj}
\begin{aligned}
\pi_i(B_r(x_{i+1}))&=B_r(\pi_i(x_{i+1}))\, , \\
B_r(x_{i+1})&\subset\pi_i^{-1}(B_r(\pi_i(x_{i+1})))\, .
\end{aligned}
\end{equation}
 Axiom (2) is actually a consequence of
Axioms (4), (5) below.
%That $\pi$ is open implies that the set of such $x_{i}$ is nonempty.
%In Section \ref{spi} the openess enters in a different way, when Lipschitz
%functions on $X_{i+1}$ a pushed forward to obtain Lipschitz functions on $X_i$.

Axiom (3), together 
%  with Axiom (2),implies that the sequence $(X_i,d_i)$ is Gromov-Hausdorff convergent
% and that the Gromov-Hausdorff limit $(X_\infty,d_\infty)$ can be canonically identified 
%with the  inverse limit. Together 
with (\ref{surj}), gives
\begin{equation}
\label{containment}
B_r(\pi_i(x_{i+1}))\subset \pi_i^{-1}(B_{r+\theta m^{-(i+1)}}(x_{i+1}))
\subset B_{r+\theta m^{-(i+1)}}(\pi_i(x_{i+1}))\, .
\end{equation}

This statement, which can be iterated, says that inverse images of balls are themselves
comparable to balls. It  is used in the  inductive arguments which
control the constants in the doubling and Poincar\'e inequalities.

Axioms (1)--(3) imply that for all $x_{i+1,1}, x_{i+1,2}\in X_{i+1}$, we have 
\begin{equation}
\label{eqn_distance_growth}
\begin{aligned}
d_i(\pi_i(x_{i+1,1}),\pi_i(x_{i+1,2}))&\leq d_{i+1}(x_{i+1,1}, x_{i+1,2})\\
                                                       &\leq d_i(\pi_i(x_{i+1,1}),\pi_i(x_{i+1,2}))
+2\theta\cdot m^{-(i+1)}\, ;
\end{aligned}
\end{equation}
compare (\ref{surj}), (\ref{containment}).

Note also that Axioms (1) and (3) together imply that 
for all $i$ and all $x_i\in X_i$ the cardinality ${\rm card}(\pi_i^{-1}(x_i))$ satisfies
\begin{equation}
\label{cb}
{\rm card}(\pi_i^{-1}(x_i))\leq \Delta^{\theta+1}\, ,
\end{equation}
since any two points of $\pi_i^{-1}(x_i)$ are connected by an edge path of length $\leq\theta \cdot m^{-(i+1)}$
and there are at most $ \Delta^{\theta+1}$ such paths which start at a give point of $\pi_i^{-1}(x_i)$.

Axiom (4) implies that on scale $m^{-i}$ the metric measure geometry
of $X_i$ is bounded.  As a consequence, for balls $B_{cm^{-i}}(x_i)\subset X_i$
there is a doubling condition 
and Poincar\'e inequality with constants which depend only on $c$ and are independent
of $i$;
see for example Lemma \ref{lpi}.

Axiom (5) is used is showing that the sequence $(X_i,d_i,\mu_i)$
converges in the measured Gromov-Hausdorff sense.
It also plays a role in the inductive arguments verifying
the doubling condition and the Poincar\'e inequality. 

Axiom  (6) is the least obvious of our axioms. However, it enters crucially in both of the 
  proofs that we give of the  bound on the constant in 
the  Poincar\'e inequality for $(X_\infty,d_\infty,\mu_\infty)$; see Sections \ref{s:piproof}--\ref{s:p2}.
Here is a very brief indication of the role of Axiom (6).   Given Axioms (1)--(5), 
 the  disintegration $x\mapsto \D_i(x)$ of the measure $\mu_{i+1}$ with respect to the
mapping $\pi_i:X_{i+1}\ra X_i$,  can be used to push a function 
$f_{i+1}:X_{i+1}\to\R$ down to a function 
$f_i:X_i\to\R$.  If $f_{i+1}$ is Lipschitz, then 
Axiom (4) implies that away from the vertices of $X_i'$, the pointwise
Lipschitz constant of $f_i$ is controlled by that of $f_{i+1}$.
It follows from Axiom (6) that $f_i$ is continuous at vertices, and hence
the Lipschitz control holds at the vertices of $X_i'$ as well. 
This construction is a key part of the induction step in 
our first proof of the
Poincar\'e inequality.
(Absent Axiom (6), even if $f_{i+1}$ is Lipschitz,
the function $f_i$ need not be continuous at the vertices
of $X_i$.)

Dually, given Axioms (1)--(5), there is a natural probability measure $\Omega$ on the collection $\Gamma$
of lifts to $X_{i+1}$ of an edge path $\gamma_i'\subset X_i'$. If 
Axiom (6) holds, this measure  has the additional property of being
 independent of the orientation of $\gamma_i'$.  This  
turns out to be required for the proof of the Poincar\'e inequality based on path families.

\subsection{The inverse limit}

We recall that the {\bf inverse limit} of the inverse system $\{X_i\}$ is the 
collection $X_\infty$
of compatible sequences, i.e. 
$$
X_\infty=\{(v_i)\in \prod_i X_i \;\mid\;   \pi_i(v_{i+1})=v_i \;
\text{for all} \; i\geq 0\}\,.
$$
  For all $i\geq 0$, one has a projection 
map $\pi^\infty_i:X_\infty\ra X_i$ that sends $(v_j)\in X_\infty$ to $v_i$.  

For any $(v_i),(w_i)\in X_\infty$,
the sequence $\{d_j(v_j,w_j)\}$ is nondecreasing since 
the projection maps $\{\pi_j\}$ are $1$-Lipschitz, and bounded above by 
(\ref{eqn_distance_growth}); therefore we have a well-defined metric
on the inverse limit  given by 
$$
d_\infty((v_i),(w_i))=\lim_{j\ra \infty}d_j(v_j,w_j)\,.
$$
The projection map $\pi^\infty_i:(X_\infty,d_\infty)\ra (X_i,d_i)$ is $1$-Lipschitz.

We now record a  consequence of the above discussion:
\begin{corollary}
\label{cor_top_dim_1}
The inverse limit $X_\infty$ has topological dimension $1$. 
%  Moreover, for all
%$x_\infty\in X_\infty$,  every tangent
%cone at $x_\infty$ has topological dimension 1.
\end{corollary}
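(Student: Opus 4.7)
The plan is to prove $\dim X_\infty \leq 1$ and $\dim X_\infty \geq 1$ separately, relying only on the metric axioms (1)--(3); the measure axioms play no role. The natural framework is the small inductive dimension, which coincides with the usual topological dimension on separable metric spaces (and $X_\infty$ is separable, being a subspace of the countable product of locally finite graphs).

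For the upper bound, I would show that every point $p = (v_i) \in X_\infty$ admits a basis of open neighborhoods with zero-dimensional topological boundary. Given $\eps > 0$, choose $j$ large enough that $2\th\, m^{-j}/(m-1) < \eps/2$; iterating (\ref{eqn_distance_growth}) then yields
$d_\infty(p',p'') \leq d_j(\pi^\infty_j(p'),\pi^\infty_j(p'')) + \eps/2$
for all $p',p''\in X_\infty$. In the graph $X_j$, pick an open neighborhood $U\ni v_j$ of $d_j$-diameter $<\eps/2$ whose topological boundary $\partial U$ is a finite set of interior points of edges --- possible since $X_j$ is a $1$-complex --- and set $V := (\pi^\infty_j)^{-1}(U)$. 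By the iterated estimate $\diam V < \eps$, while continuity of $\pi^\infty_j$ gives $\partial V \subset (\pi^\infty_j)^{-1}(\partial U)$, a finite union of fibers. The main technical step, which I expect to be the principal obstacle, is to identify each fiber $(\pi^\infty_j)^{-1}(x) \subset X_\infty$ with the inverse limit of the finite sets $(\pi_j \circ \cdots \circ \pi_{k-1})^{-1}(x) \subset X_k$ (finiteness by iterating (\ref{cb})). Under this identification, each fiber is compact and totally disconnected, hence zero-dimensional; a finite union of zero-dimensional compacta is zero-dimensional, so $\dim \partial V = 0$ and therefore $\dim X_\infty \leq 1$.

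For the lower bound, I would produce a non-degenerate arc in $X_\infty$ by iterated path lifting. Take any edge $e_0 \subset X_0$, parametrized by arclength as $c_0 : [0,1] \to X_0$. The path-lifting consequence of Axiom (2) recorded just after (\ref{surj}) produces compatible arclength lifts $c_i : [0,1] \to X_i$ with $\pi_i \circ c_{i+1} = c_i$. Then $c_\infty(t) := (c_i(t))_i$ is a well-defined point of $X_\infty$, and the uniform $1$-Lipschitz bound on the $c_i$ passes to the limit, so $c_\infty: [0,1]\to X_\infty$ is continuous. Since $\pi^\infty_0 \circ c_\infty = c_0$ is non-constant, the image is a non-degenerate connected subset of $X_\infty$, forcing $\dim X_\infty \geq 1$. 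The lower bound is essentially formal; the real content lies in the topological analysis of fibers underlying the upper bound.
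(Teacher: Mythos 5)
Your proposal is correct, and while the lower bound is the same path-lifting argument the paper uses, your upper bound takes a genuinely different route. The paper works with the covering dimension directly: it pulls back the cover of $X_i$ by open stars of vertices to a cover $\hat\U_i$ of $X_\infty$ whose nerve is $1$-dimensional and whose elements have diameter $\lesssim m^{-i}$, so that these covers refine any given open cover of a compact subset; local compactness then finishes the argument. You instead bound the small inductive dimension by exhibiting, at each point, arbitrarily small open neighborhoods of the form $(\pi_j^\infty)^{-1}(U)$ whose boundaries sit inside finitely many fibers, and you reduce the zero-dimensionality of a fiber to its description as an inverse limit of finite sets (finiteness via (\ref{cb})), hence a compact totally disconnected metrizable space. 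The step you flag as the principal obstacle is in fact routine: the two-sided bound obtained by iterating (\ref{eqn_distance_growth}) shows that the $d_\infty$-topology coincides with the inverse-limit (product) topology on $X_\infty$, and total disconnectedness of a fiber follows because any two distinct points of it are already separated at some finite, discrete stage $(\pi_j\circ\cdots\circ\pi_{k-1})^{-1}(x)$. What each approach buys: the paper's nerve argument is shorter and generalizes immediately to the higher-dimensional cube-complex systems of Section \ref{higher} (where the nerve bound becomes $n$), whereas your fiber analysis isolates a structural fact of independent interest --- the fibers of $\pi_j^\infty$ are zero-dimensional Cantor-type sets --- and avoids invoking the refinement characterization of covering dimension, at the cost of appealing to the coincidence of inductive and covering dimension for separable metric spaces.
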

\begin{proof}
By the path lifting argument in the discussion of Axiom (2), 
one may take an edge $\ga_0\subset X_0$,
and lift it  isometrically
 to a compatible family $\{\ga_j\subset X_j\}_{j\geq 0}$
which produces a 
geodesic segment in $X_\infty$.  Therefore $X_\infty$ has topological dimension at
least $1$.

If $\U_i$ is the cover of $X_i$
by open stars of vertices, and $\hat\U_i$ is the inverse image of $\U_i$ under
the projection map $X_\infty\ra X_i$, then $\hat\U_i$ has $1$-dimensional nerve,
and the diameter of each open set $U\in\hat\U_i$ is $\lesssim m^{-i}$, see (\ref{containment}).
For any compact subset $K\subset X_\infty$, and any open cover $\U$ of $K$,
some $\hat\U_i$ will provide a refinement of $\U$; this shows that $K$ has topological
dimension $\leq 1$.  As $X_\infty$ is locally compact, it follows that $X_\infty$
has topological dimension $\leq 1$. 
\end{proof}

We now discuss the measure on $X_\infty$.
For every $i$, one obtains a subalgebra $\Si_i$ of the Borel $\si$-algebra on $X_\infty$
by taking the inverse image of the Borel $\si$-algebra on $X_i$.
One readily checks using (\ref{containment}) that the
$\si$-algebra generated by the countable union $\cup_i\, \Si_i$ is the full Borel 
$\si$-algebra on $X_\infty$.  The $\si$-algebra
$\Si_i$ has a measure $\hat\mu_i$ 
induced from $\mu_i$ by the projection $\pi^\infty_i$.  Axiom (5) implies that the
measures $\hat\mu_i$ on the
increasing family $\{\Si_i\}$ are compatible under restriction, and
by applying the Caratheodory extension theorem, one gets that the $\hat\mu_i$'s extend
uniquely to a  Borel measure $\mu_\infty$ on $X_\infty$.

\subsection{Measured Gromov-Hausdorff convergence}

In view of (\ref{eqn_distance_growth}), and since $\pi^\infty_i$ is also surjective,  
it follows easily that the sequence of mappings
$\{\pi^\infty_i:(X_\infty,d_\infty)\ra (X_i,d_i)\}$ is Gromov-Hausdorff convergent;
in particular the  Gromov-Hausdorff limit is isometric to $(X_\infty,d_\infty)$. 
By bringing in Axiom (5), we get that the sequence
$\{\pi^\infty_i:(X_\infty,d_\infty,\mu_\infty)\ra (X_i,d_i,\mu_i)\}$
is convergent in the pointed measured Gromov-Hausdorff sense;  for the definition, 
see \cite{fukaya}.
 Hence, we
obtain:
\begin{proposition}
\label{mghc}
 The sequence $(X_i,d_i,\mu_i)$  converges in the pointed 
measured Gromov-Hausdorff sense to $(X_\infty,d_\infty,\mu_\infty)$.  
\end{proposition}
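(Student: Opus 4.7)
The plan is to deduce pointed measured Gromov--Hausdorff convergence from two ingredients: that the projections $\pi^\infty_i:X_\infty\to X_i$ are surjective $\epsilon_i$-isometries with $\epsilon_i\to 0$, and that $(\pi^\infty_i)_*\mu_\infty=\mu_i$ exactly. Once these are in hand, the conclusion is immediate from the definition in \cite{fukaya}, since an exact pushforward is much stronger than the weak-$\ast$ convergence of measures usually required.

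For the metric half, I would first iterate the two-sided estimate (\ref{eqn_distance_growth}). Telescoping the error term $2\theta\, m^{-(j+1)}$ into a geometric series yields
$$d_i(v_i,w_i)\leq d_\infty(v,w)\leq d_i(v_i,w_i)+\tfrac{2\theta}{m-1}\,m^{-i},$$
so $\pi^\infty_i$ distorts distances by at most $\epsilon_i:=\tfrac{2\theta}{m-1}\,m^{-i}\to 0$. Surjectivity of $\pi^\infty_i$ I would obtain by iteratively lifting through Axiom (2): given $x_i\in X_i$, the simplicial surjection $\pi_i:X_{i+1}\to X_i'$ has nonempty fibers, so one inductively picks $x_{i+1},x_{i+2},\dots$ with $\pi_j(x_{j+1})=x_j$, and completes the sequence backwards by similarly lifting $x_0,\dots,x_{i-1}$. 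Together these give a surjective $\epsilon_i$-isometry, hence pointed Gromov--Hausdorff convergence based at any chosen compatible sequence of basepoints.

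For the measure half, essentially all the work is already done in the Caratheodory construction of $\mu_\infty$: by Axiom (5), the pulled-back measures $\hat\mu_i$ on $\Si_i$ are compatible under restriction (since for $E\subset X_i$ Borel, $\mu_{i+1}(\pi_i^{-1}(E))=((\pi_i)_*\mu_{i+1})(E)=\mu_i(E)$), so they extend to a unique Borel measure $\mu_\infty$, and this extension pushes forward under $\pi^\infty_i$ to $\mu_i$ tautologically. Hence $(\pi^\infty_i)_*\mu_\infty=\mu_i$.

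I do not expect a serious obstacle: the identification of $X_\infty$ as the mGH limit is essentially forced by the axioms once the correct definitions are unpacked. The main substantive inputs are the $O(m^{-i})$ fiber-diameter bound from Axiom (3) (for the metric half) and the pushforward compatibility of Axiom (5) (for the measure half). The remaining bookkeeping -- rewriting these facts as Fukaya's convergence of integrals $\int \phi\,d\mu_i=\int \phi\circ\pi^\infty_i\,d\mu_\infty$ against compactly supported test functions, and observing that the approximate isometries $\pi^\infty_i$ are admissible in his sense -- is routine.
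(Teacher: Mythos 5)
Your proposal is correct and follows the same route as the paper, which likewise deduces Gromov--Hausdorff convergence from (\ref{eqn_distance_growth}) plus surjectivity of $\pi^\infty_i$ and then upgrades to measured convergence via Axiom (5) and the Caratheodory construction of $\mu_\infty$; you simply make the telescoped error bound $\tfrac{2\theta}{m-1}m^{-i}$ and the identity $(\pi^\infty_i)_*\mu_\infty=\mu_i$ explicit. (One trivial wording slip: below level $i$ the compatible sequence is obtained by projecting $x_i$ down, not by lifting.)
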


\section{Bounded local geometry and verification of doubling} 
\label{s:blg}

Consider an admissible inverse system as in (\ref{is}), with constants, 
$2\leq m\in \N$, $\De$, $\th$, $C\in (0,\infty)$ as in (1)--(6). 
The following lemma asserts the existence of a local doubling condition, and 
a local Poincar\'e inequality. The proof is completely standard.
\begin{lemma}
\label{lpi}
%  Assume (1)--(6).
%and in addition, that the $X_i$ are equipped with measures
%$\mu_i$ such that
% for all $v_i\in V_i$, there exists, $c(v_i)>0$, 
%  \begin{equation}
%  \label{measbound}
%  (1+\psi)^{-1}\L \leq c(v_i)\cdot \mu_{i}\leq (1+\psi)\L\,
%   \qquad ({\rm on}\, \, B_{m^{-i}}(v_i)) )\, .
%  \end{equation}
 For all $K>0$, there exists $\beta'=\beta'(m,\De,\th,C,K)$, $\tau=\tau(m,\De,\th,C,K)$, 
$\Lambda(m,\De,\th,C,K)$,
such that for 
balls $B_r(x_i)\subset X_i$, with
$$
r\leq K\cdot m^{-i}\,  ,
$$
a doubling condition and (1,1)-Poincar\'e inequality hold , with constants 
$\beta'=\beta'(m,\De,\th,C,K ), \tau=\tau(m,\De,\th,C,K)$, $\Lambda=2$.
%$\Lambda=2$.
\end{lemma}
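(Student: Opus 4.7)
The plan is to exploit the bounded local geometry at scale $m^{-i}$ directly. After rescaling the metric by $m^i$, every edge has unit length and the claim reduces to a statement about balls of radius $\leq K$ in a graph of vertex valence $\leq \De$, on which the arclength densities of the measure on adjacent edges have ratio in $[C^{-1},C]$. On such a graph the $(1,1)$-Poincar\'e inequality is classical, so the only work is to track the dependence of the constants.

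First I would isolate the ambient finite subgraph. Set $N = \lceil 2K\rceil + 2$ and let $G \subset X_i$ be the union of the edges at combinatorial distance $\leq N$ from an edge containing $x_i$. Axiom (1) gives $\#E(G) \leq N_0(\De,K)$, and $B_{2r}(x_i) \subset G$ for every $r \leq K m^{-i}$. Iterating Axiom (4), the arclength densities $c_e$ of $\mu_i|_e$ satisfy $c_e/c_{e_0} \in [C^{-N_1}, C^{N_1}]$ for all $e \in E(G)$ and any reference edge $e_0$ through $x_i$, with $N_1 = N_1(\De,K)$. A case split on whether $r' \leq m^{-i}/2$ or $r' > m^{-i}/2$ then yields
$$
\mu_i(B_{r'}(x_i)) \asymp c_{e_0}\cdot \min(r',m^{-i}),\qquad r' \leq 2K m^{-i},
$$
with comparison constants depending only on $m,\De,C,K$; the upper bound follows by summing over at most $N_0$ edges (each intersected in an arc of length $\leq \min(2r',m^{-i})$), and the lower bound by exhibiting an explicit arc of length $\min(r',m^{-i}/2)$ in $e_0$. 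Doubling with constant $\be' = \be'(m,\De,C,K)$ is then immediate by comparing this two-sided bound at $r'$ and $2r'$.

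For the Poincar\'e inequality I would start from
$$
\int_{B_r(x_i)} |f - f_{B_r(x_i)}|\,d\mu_i \leq \frac{1}{\mu_i(B_r(x_i))} \int\!\!\int_{B_r(x_i)\times B_r(x_i)} |f(y)-f(z)|\,d\mu_i(y)\,d\mu_i(z),
$$
and for each pair $(y,z)$ fix a geodesic $\ga_{y,z}$. If $p\in\ga_{y,z}$, adding the triangle inequalities $d_i(p,x_i) \leq d_i(p,y) + d_i(y,x_i)$ and $d_i(p,x_i) \leq d_i(p,z) + d_i(z,x_i)$ and using $d_i(p,y)+d_i(p,z) = d_i(y,z) < 2r$ gives $d_i(p,x_i) < 2r$; thus $\ga_{y,z}\subset B_{2r}(x_i)$, which is why $\Lambda = 2$ suffices. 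The upper gradient inequality $|f(y)-f(z)|\leq \int_{\ga_{y,z}} g\,ds$ combined with Fubini reduces the task to a pointwise bound on the overlap function $p\mapsto \int\!\int \mathbf 1_{\ga_{y,z}}(p)\,d\mu_i(y)\,d\mu_i(z)$. Since $G$ has bounded combinatorial type, one obtains an overlap bound $\leq C_2(m,\De,C,K)\cdot r\cdot \mu_i(B_r(x_i))$, yielding (\ref{11pi1}) with $p=1$, $\Lambda=2$ and $\tau = \tau(m,\De,\th,C,K)$ (the $\th$-dependence being vacuous at this stage but retained for compatibility with the inductive arguments of later sections).

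The main, and essentially only, technical step is making a coherent choice of geodesics $\ga_{y,z}$ so that the overlap integral can be estimated uniformly. I would handle this by fixing, once per ordered pair of combinatorial positions in $G$, a preferred combinatorial geodesic between the nearest vertices and interpolating linearly inside the two endpoint edges; the overlap count then reduces to a finite combinatorial problem on $G$, bounded solely in terms of $\De$ and $K$. Given this bookkeeping, the proof is genuinely routine, matching the author's remark that it is ``completely standard.''
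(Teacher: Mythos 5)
Your proof is correct. Note that the paper itself offers no argument for Lemma \ref{lpi} beyond the remark that it is ``completely standard,'' so there is no proof of record to compare against; what you have written is precisely the standard route (the Semmes-type two-point pairing: bound $\int_{B}|f-f_B|\,d\mu_i$ by the double integral of $|f(y)-f(z)|$, run each pair through a chosen geodesic, which your triangle-inequality computation correctly confines to $B_{2r}(x_i)$, and Fubini against an overlap function), and your preliminary reductions --- bounded combinatorial type of the ambient subgraph $G$, the $C^{N_1}$ comparison of edge densities via Axiom (4), and the two-sided estimate $\mu_i(B_{r'}(x_i))\asymp c_{e_0}\min(r',m^{-i})$ --- are all sound and give the doubling constant directly.

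Two small points. First, the overlap function as you wrote it is dimensionally inconsistent with the claimed bound: after the Fubini step one must convert arclength $ds$ on $\ga_{y,z}$ to $d\mu_i$ via the densities $c_e$, so the quantity to control is really $c_{e_0}^{-1}\Phi(p)$ where $\Phi(p)=\iint \mathbf 1_{\ga_{y,z}}(p)\,d\mu_i(y)\,d\mu_i(z)$ (otherwise the inequality is not invariant under rescaling $\mu_i$, and $c_{e_0}$ is \emph{not} controlled by $m,\De,\th,C,K$). Second, once this normalization is fixed, the step you describe as the ``main technical'' one is in fact trivial in this regime: the crude bound $\Phi(p)\leq \mu_i(B_r(x_i))^2$ together with $\mu_i(B_r(x_i))\leq N_0C^{N_1}c_{e_0}\min(2r,m^{-i})\leq 2N_0C^{N_1}c_{e_0}r$ already yields $c_{e_0}^{-1}\Phi(p)\leq C_2(m,\De,C,K)\,r\,\mu_i(B_r(x_i))$, so no coherent selection of geodesics or combinatorial overlap counting is actually needed (beyond a measurable choice of $\ga_{y,z}$ for the Fubini step, which your construction supplies). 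With that adjustment the proof is complete.
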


Next we verify the doubling condition for balls of arbitrary radius.

\begin{lemma}
\label{gdoubling}
There is a constant $\beta=\beta(\Delta,\theta,C,R)$ such that for all $i$
and all $r\leq R$, the doubling condition holds for $X_i$ with constant $\beta$.
\end{lemma}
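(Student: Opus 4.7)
The plan is to project any ball $B_r(x_i)\subset X_i$ down to a level $X_j$, using the composition $\pi_j^i:=\pi_j\circ\pi_{j+1}\circ\cdots\circ\pi_{i-1}:X_i\to X_j$, with $j$ chosen so that $r$ is comparable to $m^{-j}$; the doubling estimate then reduces to the local one provided by Lemma \ref{lpi} at level $j$ with a uniformly bounded parameter $K$. Two properties of this projection are essential. First, by iterating Axiom (5), pushforward preserves the measure:
\[
(\pi_j^i)_*\mu_i=\mu_j\, .
\]
Second, by iterating (\ref{eqn_distance_growth}) and summing the geometric series $\sum_{k=j+1}^{i} m^{-k}\leq m^{-j}/(m-1)$, we get, for every $\rho>0$ and $x_j:=\pi_j^i(x_i)$,
\[
(\pi_j^i)^{-1}\bigl(B_{\rho}(x_j)\bigr)\ \subset\ B_{\rho+\eta}(x_i)\, ,\qquad \eta\ \leq\ \frac{2\theta\, m^{-j}}{m-1}\, .
\]
Combined with the trivial opposite inclusion $B_r(x_i)\subset (\pi_j^i)^{-1}(B_r(x_j))$ (since $\pi_j^i$ is $1$-Lipschitz), these imply, as soon as $\eta\leq r/4$,
\[
\mu_j\bigl(B_{r/2}(x_j)\bigr)\ \leq\ \mu_i\bigl(B_r(x_i)\bigr)\qquad\text{and}\qquad \mu_i\bigl(B_{2r}(x_i)\bigr)\ \leq\ \mu_j\bigl(B_{2r}(x_j)\bigr)\, .
\]
Two applications of the local doubling from Lemma \ref{lpi} at level $j$ then bound $\mu_j(B_{2r}(x_j))/\mu_j(B_{r/2}(x_j))$, and the desired doubling for $B_r(x_i)$ follows.

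\textbf{Choice of $j$ and three regimes.} Set $\alpha:=(m-1)/(8\theta)$ and let $j\in\Z$ be maximal with $m^{-j}\leq\alpha r$; this choice simultaneously forces $\eta\leq r/4$ and $r\leq (m/\alpha)\,m^{-j}$. Three cases arise. If $0\leq j\leq i$, apply the argument above with $K=2m/\alpha$, which depends only on $m$ and $\theta$, giving $\beta$ depending only on $m,\Delta,\theta,C$. If $j>i$, one has $r\leq m^{-i}/\alpha$, so Lemma \ref{lpi} at level $i$ itself with $K=2/\alpha$ already gives doubling directly. If $j<0$ (which happens only when $r>1/\alpha$, a constant depending on $m,\theta$), redefine $j:=0$; then $\eta\leq\theta/(m-1)$ is a fixed constant, which is still at most $r/4$ in this regime since $r$ is now bounded below by $8\theta/(m-1)$, and Lemma \ref{lpi} applied at level $0$ with $K=2R$ yields a doubling constant depending on $R$. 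The maximum of the three resulting constants is the desired $\beta=\beta(m,\Delta,\theta,C,R)$.

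\textbf{Anticipated obstacle.} The only subtle point is keeping the fattening $\eta$ controlled relative to $r$ across all three regimes; once the choice of $\alpha$ is made, the doubling inequality is a direct consequence of Lemma \ref{lpi} combined with Axiom (5). The $R$-dependence in the constant enters precisely when the natural $j$ would be negative, i.e.\ when $r$ is of order $1$ or larger; this necessitates stopping at level $0$ and allowing $K$ to depend on $R$ in the application of Lemma \ref{lpi}.
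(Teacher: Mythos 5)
Your argument is correct and is essentially the paper's own proof: both choose $j$ with $m^{-j}$ comparable to $r$, use the iterated pushforward identity $(\pi_j^i)_*\mu_i=\mu_j$ together with the containment of $(\pi_j^i)^{-1}$ of balls in slightly fattened balls to sandwich $\mu_i(B_r(x_i))$ and $\mu_i(B_{2r}(x_i))$ between measures of comparable balls in $X_j$, and then invoke the local doubling of Lemma \ref{lpi} at level $j$ (the paper handles $r>1$ by reducing to combinatorial doubling of $X_0$, you by taking $K=2R$ at level $0$; both are fine). Only cosmetic slips: your $j$ should be \emph{minimal}, not maximal, with $m^{-j}\leq\alpha r$, and at $j=0$ your own bound gives $\eta\leq 2\theta/(m-1)$ rather than $\theta/(m-1)$, neither of which affects the argument.
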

\begin{proof}
 First, observe that since  for all $k$, from (\ref{containment}) and by
Axiom (5), $(\pi_k)_*(\mu_{k+1})=\mu_k$,  we get for $x_{k+1}\in X_{k+1}$,
\begin{equation}
\label{containment1}
\mu_{k}(B_r(\pi_{k}(x_{k+1})))\leq \mu_{k+1}(\pi_{j}^{-1}(B_{r+\theta m^{-(k+1)}}(x_{k+1})))\, ,
\end{equation}
\begin{equation}
\label{containment2}
\mu_{k+1}(\pi_{k}^{-1}(B_{s}(x_k)))
= \mu_k(B_{s}(x_k))\, .
\end{equation}

First assume that $R=1$.
Let $j$ be such that $m^{-(j+1)}<\frac{r}{1+2\theta}\leq m^{-j}$.  
Let $x_i\in X_i$ and consider $B_r(x_i)$. If $j\geq i$,
the conclusion follows from 
from Lemma \ref{lpi}. Otherwise, 
for $j+1\leq k\leq i$ inductively
define  $x_{k-1}=\pi_{k-1}(x_k)$.  
Since, $m^{-(j+1)}\leq \frac{r}{1+2\theta}$
 by (\ref{containment1}),
 (\ref{containment2})
 and induction we get
\begin{equation}
\label{containment3}
\mu_j(B_{\frac{r}{1+2\theta}}(x_j))\leq \mu_i(B_r(x_i)))
\leq \mu_i(B_{2r}(x_i))\leq \mu_j(B_{2r}(x_j))\, ,
\end{equation}
while  by (\ref{containment2}), we have
\begin{equation}
\label{containment4}
\mu_i(B_{2r}(x_i))\leq \mu_j(B_{2r}(x_j))\, .
\end{equation}
Since $x_j\in X_j$ and $\frac{r}{1+2\theta}\leq m^{-j}$, the conclusion follows
from (\ref{containment3}),  (\ref{containment4}) and
Lemma \ref{lpi}.

Now if $R>1$, the doubling inequality with $\be=\be(R)$
is equivalent to a doubling inequality 
for the graph $X_0$, which follows from the fact that it has controlled degree.\end{proof}

%  Under an additional assumption, which posits the existence
%  of certain continuous disintegrations,
%  we will show by induction,
%  the existence of  a sequence of uniformly doubling measures $\mu_{i+1}$
%  with $(\pi_{i+1})_*(\mu_{i+1})=\mu_i$, for which 
%  ( \ref{measbound}) 
%  of Lemma \ref{lpi} holds. Then, using, Lemma \ref{lpi}, 
%  we show that the metric measure spaces
%  $(X_i,d^{X_i},\mu_i)$ are PI spaces such that for say $R\leq 1$, 
%  (\ref{doubling}), (\ref{11pi}),
%  hold with  constants $\beta,\tau$ at most a definite  multiple of $\tau$ of Lemma \ref{lpi}.
%   This again is an induction argument.

\section{Continuous fuzzy sections}
\label{spi}

Let $\mathcal P(Z)$ denote the space of Borel probability measures on $Z$ 
with the weak topology.

\begin{definition}
\label{fs}
Given a map of metric spaces $\pi:X\to Y$,  a {\bf fuzzy section of} $\pi $ is a 
Borel measurable map
from $\D:X\to \mathcal P(Y)$  such that $\D(x)$ is supported on 
$\pi^{-1}(x)$, for all $x\in X$.  $\D$ is called a {\bf continuous fuzzy section}
 if it is continuous with
 respect to the metric topology
on $X$ and the weak topology of $\P(Y)$.  The fuzzy sections in this paper are all
atomic, i.e. $\D(x)$ is a finite convex combination of Dirac masses.
\end{definition}

Here,  we will  observe that given
an admissible inverse system $\{(X_i,d_i,\mu_i,\pi_i)\}$ as in (\ref{is}),
each of the maps $\pi_i:X_{i+1}\to X_i$ has
a naturally associated continuous fuzzy section $\D_i$ defined via the measures $\mu_i,\mu_{i+1}$,
which satisfies
for some
$c_0>0$,
\begin{equation}
\label{lb1}
 \D_i(x_i)(x_{i+1})\geq c_0\qquad (\text{for all}\,\, i, \, x_i\in X_i, \, x_{i+1}\in\pi_i^{-1}(x_i))\, ,
\end{equation}
and has the additional property that if $e_{i+1}\subset X_{i+1}$ is an edge mapped isomorphically onto an edge $e_i\subset X_i$, then 
$x_i\mapsto\D_i(x_i)(e_{i+1})$
is constant
as $x_i$ varies in the interior of $e_i$;  see (\ref{fs2}).  This is used
 in Section \ref{s:piproof} in the proof of the Poincar\'e inquality.
We also observe that conversely, given
an inverse system of metric graphs $(X_i,d_i)$, as in (\ref{is}) which satisfies
(1)--(3), and  a sequence of
 continuous fuzzy sections $\D_i$ 
satisfying (\ref{lb1}), 
 there is a naturally associated sequence of measures
$\mu_i$ such that $\mu_0$ is normalized to be $1$-dimensional Lebesgue measure and
$(X_i,d_i,\mu_i)$ satisfies Axioms (1)--(6).  This reformulation is used
in Section \ref{eym}, in which of examples of admissible systems are constructed.    
 
Consider an admissible inverse system as in (\ref{is}). 
Let  ${\rm int}(e'_i)$ denote an open edge of $X_i'$, and ${\rm int}(e_{i+1})$ an open edge
of $X_{i+1}$, which is a component of $\pi_i^{-1}({\rm int}(e'_i))$.
  For $x_i\in {\rm int}(e'_i)$, $x_{i+1}\in \pi_i^{-1}(x_i)$ we define
 \begin{equation}
\label{fs1}
  \D_i(x_i)(x_{i+1})=\frac{\mu_{i+1}(e_{i+1})}{\mu_i(e'_i)}\, .
 \end{equation}
%   For $e'$, $x_1, x_2\in  {\rm int}(e')$, $y_1\in f\cap\pi_i^{-1}(x_1)$, $y_2\in f\cap\pi_i^{-1}(x_2) )$, 
%  we have
%  \begin{equation}
%  \label{fs2}
%  \D_i(x_1)(y_1)=\D_i(x_2)(y_2)\, . 
%  \end{equation}
Thus, $\D_i$ is continuous on ${\rm int}(e'_i)$, and in fact, constant in  the sense that
for $x_{i,1}, x_{i,2}\in  {\rm int}(e'_i)$, $x_{i+1,1}\in e_{i+1}\cap\pi_i^{-1}(x_{i,1})$, 
\begin{equation}
\label{fs2}
\D_i(x_{i,1})(x_{i+1,1})=\D_i(x_{i,2})(x_{i+1,2})\, . 
\end{equation}

Next, suppose $v_i'$ is a vertex of $X_i'$ and $e_i'$ is an edge of $X_i'$ with $v_i'$ as
one of its end points.  If $v_{i+1}\in \pi_i^{-1}(v_i')$ then $v_{i+1}$ is a vertex of $X_{i+1}$ and
we define
\begin{equation}
\label{fs3}
\begin{aligned}
\D_i(v_i')(v_{i+1})& =\frac{\mu_{i+1}(\pi_i^{-1}(e_i')\cap \Star(v_{i+1},X_{i+1}))}{\mu_i(e_i')}\\
&=\sum_{e_{i+1}\in \Star(v_{i+1})}\frac{\mu_{i+1}(e_{i+1})}{\mu_i(e'_i)}\, .
\end{aligned}
\end{equation}
By (\ref{con1}) of  Axiom (6) (the continuity condition) $\D_i(x_i')(x_{i+1})$ is well
defined  independent of 
the choice of $e_i'$ with end point $v_i'$.
\begin{lemma}
\label{fsp}
$\D_i$ is a
continuous fuzzy section satisfying  (\ref{lb1}).
\end{lemma}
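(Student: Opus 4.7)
The plan is to verify three properties in turn: that each $\D_i(x_i)$ is a well-defined probability measure on $\pi_i^{-1}(x_i)$, that the lower bound (\ref{lb1}) holds, and finally that the assignment $x_i\mapsto \D_i(x_i)$ is weakly continuous. Borel measurability is then automatic from continuity.

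For well-definedness, on the interior of an edge $e_i'\subset X_i'$ the preimage $\pi_i^{-1}(e_i')$ is a disjoint union of edges $e_{i+1}\subset X_{i+1}$ each mapped isometrically onto $e_i'$, by Axioms (1)--(2); Axiom (5) then gives $\mu_i(e_i')=\sum \mu_{i+1}(e_{i+1})$, so the weights in (\ref{fs1}) sum to $1$. At a vertex $v_i'$, each edge $e_{i+1}\subset \pi_i^{-1}(e_i')$ has exactly one endpoint in $\pi_i^{-1}(v_i')$, so the sets $\pi_i^{-1}(e_i')\cap \Star(v_{i+1},X_{i+1})$, as $v_{i+1}$ ranges over $\pi_i^{-1}(v_i')$, partition $\pi_i^{-1}(e_i')$ up to a finite set of vertices, and Axiom (5) again gives total mass $1$. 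Here Axiom (6) is exactly what ensures that (\ref{fs3}) is independent of the choice of edge $e_i'\in \Star(v_i',X_i')$, so that $\D_i(v_i')$ is unambiguously defined.

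For the lower bound, fix $e_i'$ and an edge $e_{i+1}\subset \pi_i^{-1}(e_i')$. By Axiom (3), the vertices of any two such edges lying over a fixed endpoint of $e_i'$ sit in a fiber of $d_{i+1}$-diameter $\leq \theta m^{-(i+1)}$, so any two edges in $\pi_i^{-1}(e_i')$ are joined by an edge path of at most $\theta+1$ edges in $X_{i+1}$; iterating Axiom (4) along this path shows their $\mu_{i+1}$-masses agree up to a factor $C^{\theta+1}$. Since by (\ref{cb}) and Axiom (1) the number of edges in $\pi_i^{-1}(e_i')$ is bounded by some $N=N(\De,\theta)$ (at most $\De$ edges emanate from each of the $\leq \De^{\theta+1}$ preimages of an endpoint of $e_i'$), the identity $\mu_i(e_i')=\sum \mu_{i+1}(e_{i+1})$ forces
$$
\frac{\mu_{i+1}(e_{i+1})}{\mu_i(e_i')}\geq \frac{1}{N\,C^{\theta+1}},
$$
which gives (\ref{lb1}) on edge interiors; at a vertex, openness of $\pi_i$ (Axiom (2)) guarantees that $\Star(v_{i+1},X_{i+1})$ contains at least one edge lying in $\pi_i^{-1}(e_i')$, so $\D_i(v_i')(v_{i+1})$ inherits the same lower bound from (\ref{fs3}).

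The main obstacle is continuity, and it is here that Axiom (6) plays its essential role. On the interior of each edge of $X_i'$, continuity is immediate from (\ref{fs2}): the weights are locally constant and the atoms move continuously along the isometric lifts. At a vertex $v_i'$, approaching along an edge $e_i'\in \Star(v_i',X_i')$, each atom on an edge $e_{i+1}\subset \pi_i^{-1}(e_i')$ migrates to the unique endpoint of $e_{i+1}$ in $\pi_i^{-1}(v_i')$; grouping atoms by their limit endpoint $v_{i+1}$, the weak limit of $\D_i(x_i)$ is
$$
\sum_{v_{i+1}\in \pi_i^{-1}(v_i')}\left(\sum_{e_{i+1}\in \Star(v_{i+1},X_{i+1})\cap \pi_i^{-1}(e_i')}\frac{\mu_{i+1}(e_{i+1})}{\mu_i(e_i')}\right)\delta_{v_{i+1}},
$$
which matches $\D_i(v_i')$ by (\ref{fs3}). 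Continuity at $v_i'$ therefore reduces precisely to independence of this limit from the approach edge $e_i'$, and this is exactly the content of Axiom (6); without it, different approach directions would produce different limits and $\D_i$ would fail to even be well-defined at the vertex, let alone continuous.
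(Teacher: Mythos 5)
Your proof is correct and follows the same route as the paper, which simply cites (\ref{fs1}), (\ref{fs3}) and Axiom (6) for continuity and (\ref{cb}) together with Axioms (3)--(4) for the lower bound (\ref{lb1}); you have merely filled in the details of that two-line sketch, including the worthwhile observation that Axiom (5) makes each $\D_i(x_i)$ a probability measure.
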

\begin{proof}

This follows immediately from (\ref{fs1}), (\ref{fs3}) that $\D_i$ is continuous

\begin{remark}
Note that $\D_i$ is simply the disintegration of $\mu_{i+1}$ with respect to the 
map $\pi_i:X_{i+1}\ra X_i$.
\end{remark}

 From (\ref{cb}), together with Axioms (3) and (4), it follows that $\D_i$ satisfies 
the lower bound (\ref{lb1}).
\end{proof}

The next proposition provides a sort of converse to the previous lemma.
\begin{proposition}
\label{equivalent}
Suppose the inverse system in (\ref{is}) satisfies (1)--(3).  Let
$\D_i$ denote a continuous fuzzy section of $\pi_i$, $i=0,1,\ldots$,
satisfying (\ref{lb1}) and (\ref{fs2}).
Let $\mu_0$ denote $1$-dimensional Lebesgue measure and define
$\mu_i$ inductively by (\ref{fs1}).  Then $\mu_i$ satisfies (4)--(6) for all
$i$.
\end{proposition}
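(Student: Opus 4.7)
The plan is to verify Axioms (4), (5), and (6) in turn for the inductively defined $\mu_i$, treating (5) and (6) as the substantive points and (4) as a consequence; the proof is by induction on $i$, the base case $\mu_0$ being trivial.

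For Axiom (5), I would fix an edge $e_i' \subset X_i'$ and observe, using (\ref{fs1}) together with the fact that $\D_i(x_i)$ is a probability measure on $\pi_i^{-1}(x_i)$, that
\begin{equation*}
\sum_{e_{i+1}\,:\,\pi_i(e_{i+1}) = e_i'} \frac{\mu_{i+1}(e_{i+1})}{\mu_i(e_i')} \;=\; \sum_{x_{i+1} \in \pi_i^{-1}(x_i)} \D_i(x_i)(x_{i+1}) \;=\; 1
\end{equation*}
for any interior point $x_i \in e_i'$. Since $\mu_{i+1}$ is a constant multiple of arclength on each edge and $\pi_i$ restricts to an isometry on each edge, this gives $(\pi_i)_*\mu_{i+1}|_{e_i'} = \mu_i|_{e_i'}$; summing over the $m$ subedges of $X_i'$ tiling each edge of $X_i$ yields Axiom (5).

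Axiom (6) is the crux, and this is where the continuity of $\D_i$ enters. Fix a vertex $v_i' \in X_i'$, a lift $v_{i+1} \in \pi_i^{-1}(v_i')$, and an edge $e_i'$ of $X_i'$ with endpoint $v_i'$, parametrized by arclength from $v_i'$. For small $t > 0$, the fiber over $e_i'(t)$ consists of atoms $x_{i+1}^{(j)}(t)$ on the edges $e_{i+1}^{(j)}$ of $X_{i+1}$ lying above $e_i'$, and by (\ref{fs2}) the weight at $x_{i+1}^{(j)}(t)$ is the $t$-independent value $\mu_{i+1}(e_{i+1}^{(j)})/\mu_i(e_i')$. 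As $t \to 0$ these atoms converge to the endpoints of the $e_{i+1}^{(j)}$ above $v_i'$, and weak continuity of $\D_i$ forces
\begin{equation*}
\D_i(v_i')(v_{i+1}) \;=\; \sum_{j \,:\, e_{i+1}^{(j)} \ni v_{i+1}} \frac{\mu_{i+1}(e_{i+1}^{(j)})}{\mu_i(e_i')} \;=\; \frac{\mu_{i+1}(\pi_i^{-1}(e_i') \cap \Star(v_{i+1}, X_{i+1}))}{\mu_i(e_i')}.
\end{equation*}
The left-hand side depends only on the pair $(v_i', v_{i+1})$, so the right-hand side is independent of the choice of $e_i'$ at $v_i'$, which is precisely Axiom (6).

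For Axiom (4), the first half, that $\mu_{i+1}$ is a constant multiple of arclength on each edge, is built into the construction. For the adjacent-edge ratio, given $e_{i+1,1}, e_{i+1,2} \subset X_{i+1}$ sharing a vertex $v_{i+1}$, their $\mu_{i+1}$-masses equal $\D_i(x_i)(x_{i+1,k})\,\mu_i(e_{i,k}')$ where $e_{i,k}' \subset X_i'$ contains $\pi_i(e_{i+1,k})$; the bounds $\D_i(x_i)(x_{i+1}) \in [c_0, 1]$ supplied by (\ref{lb1}) and the probability constraint reduce the problem to comparing $\mu_i(e_{i,1}')$ with $\mu_i(e_{i,2}')$, which either coincide (when $v_i' = \pi_i(v_{i+1})$ is a subdivision midpoint of a single edge of $X_i$) or are comparable via the adjacent-edge bound for $\mu_i$ at level $i$ (when $v_i'$ is a vertex of $X_i$). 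The main obstacle in the whole proof is the identification in Axiom (6) of the ratio expression with $\D_i(v_i')(v_{i+1})$: this is where continuity of the fuzzy section is used in an essential way, coupled with the $t$-independence of atomic weights supplied by (\ref{fs2}).
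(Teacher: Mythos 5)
Your verifications of Axioms (5) and (6) are correct and are essentially what the paper does (the paper disposes of both in one sentence each; your weak-limit computation at a vertex is the right way to make the Axiom (6) claim precise). The genuine gap is in Axiom (4), and it concerns the uniformity of the constant $C$. Your one-step comparison bounds the $\D_i$-weights by $[c_0,1]$ via (\ref{lb1}) and then invokes the adjacent-edge bound for $\mu_i$ at level $i$; this recursion yields only $C_{i+1}\leq c_0^{-1}\,C_i$, hence $C_i\leq c_0^{-i}$, which blows up with $i$. Definition \ref{admissible} requires a single constant $C$ valid for \emph{every} $i$, and Theorem \ref{t:pi} depends on precisely this uniformity, so the inductive scheme as you have set it up does not close.

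The paper repairs this with Proposition \ref{1-1}: along the compatible vertex sequence $v_k=\pi_k(v_{k+1})$ obtained by projecting the common vertex downward, the map $\pi_k$ is $1$-$1$ on a neighborhood of $v_{k+1}$ for all but at most $\Delta$ values of $k$ (the valence at $v_k$ is nondecreasing in $k$ and bounded by $\Delta$). At such levels the ratio is preserved \emph{exactly}, not merely up to $c_0^{\pm1}$: if $\pi_k$ is locally $1$-$1$ near $v_{k+1}$, then $\pi_k^{-1}(e_{k,j}')\cap\Star(v_{k+1},X_{k+1})$ is the single edge $e_{k+1,j}$, and continuity of $\D_k$ (i.e.\ Axiom (6), already established) forces
\begin{equation*}
\frac{\mu_{k+1}(e_{k+1,1})}{\mu_k(\pi_k(e_{k+1,1}))}=\frac{\mu_{k+1}(e_{k+1,2})}{\mu_k(\pi_k(e_{k+1,2}))}\,.
\end{equation*}
One then traces the two adjacent edges down to the first level $j$ at which their images have equal $\mu_j$-mass (such a $j$ exists: either $v_j$ lies in the interior of a single edge of $X_j$, whose measure is a constant multiple of arclength, or $j=0$ and $\mu_0$ is Lebesgue), and climbs back up, losing a factor of at most $c_0^{-1}$ only at the $\leq\Delta$ exceptional levels. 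This gives the uniform constant $C=c_0^{-\Delta}$. Some version of this argument is needed to complete your proof of Axiom (4).
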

\begin{proof}
Axiom (5) follows directly from the definition of $\mu_i$ via 
(\ref{fs1}) and the fact that $\D_i(x_i)$ is a probability measure for all $x_i$.
Axiom (6) follows directly from the assumption that the fuzzy section
$\D_i$ is continuous.  

  To verify Axiom (4), let $e_{i,1},e_{i,2}$ denote edges of $X_i$
with a common vertex $v_i$ of $X_i$. Define $v_k$ by downward 
induction, by setting $v_{k-1}=\pi_{k-1}(v_k)$. Let
$j\geq 0$ be either the largest value of $k$ such that $v_k$ is a vertex of $X_k'$ which
is not a vertex of $X_k$, or if there is no such $k$, put $j=0$. In either case, it is clear 
that $\mu_j(\pi_j\circ\cdots\pi_{i-1}(e_{i,1}))=\mu_j(\pi_j\circ\cdots\pi_{i-1}(e_{i,2}))$.

 From Proposition \ref{1-1} we get:
\vskip2mm
$(*)$
{\it For all but at most $\Delta$ values of $k$, the (locally surjective) map $\pi_{k-1}$ is $1$-$1$ in
a  neighborhood of $v_{k}$.} 
\vskip2mm

 Suppose, as in (*), 
the (locally surjective) map $\pi_{k}$ is $1$-$1$ in
a  neighborhood of $v_{k+1}$, and
$e_{k+1,1}$, $e_{k+1,2}$,
are edges with common vertex $v_{k+1}$.  Since $\D_k$ is continuous, 
by (\ref{fs1}),  we have
\begin{equation}
\label{equal}
\frac{\mu_{k+1}(e_{k+1,1})}{\mu_{k+1}(e_{k+1,2})}
=\frac{\mu_{k}(\pi_k(e_{k+1,1})}{\mu_{k}(\pi_k(e_{k+1,2}))}
\, .
\end{equation} 
 For the remaining values of $k$, by (\ref{lb1}),
\begin{equation}
\label{equal1}
c_0\leq 
\frac{\mu_{k+1}(e_{k+1,1})}{\mu_{k+1}(e_{k+1,2})}
\leq c_0^{-1}\, .
\end{equation} 
It follows that (4) holds with $C=(c_0)^\Delta$.
\end{proof}

\section{Proof of the Poincar\'e inequality and of Theorem
\ref{t:pi} }
\label{s:piproof}
 
In this section $i\geq 0$ will be fixed.

Given   $f_{i+1}:X_{i+1}\to \R$, we can perform integration of $f_{i+1}$ over the
fibers $\{\pi_i^{-1}(x_i)\}_{x_i\in X_i}$
of $\pi_i:X_{i+1}\ra X_i$ with respect to the family of measures $\{\D_i(x_i)\}_{x_i\in X_i}$, 
to 
produce a function on $X_i$ which we denote by $\I_{\D_i}f_{i+1}$.  Thus,
\begin{equation}
\label{flift}
\I_{\D_i}f_{i+1}(x_i)\defeq \sum_{x_{i+1}\in\pi_i^{-1}(x_i)}\D_i(x_i)(x_{i+1})f_{i+1}(x_{i+1})\, .
\end{equation}

%  \begin{equation}
%  \label{flift}
%\tilde f_{i+1}(x_i)\defeq
%  (f_{i+1}\circ\D_i)(x_i)\defeq\sum_{k}
%   \mathcal D_i(e_i')(e_{i+1}^k)f(x_{i+1}^k) \qquad ({\rm on}\,\, e_i')\, .
%  \end{equation}

%  $x_i\in e_i'$ and $e_{k+1}$ a component of $\pi_{k+1}^{-1}(e_k')$,
%  let $x_{i+1}^k\in e_{i+1}^k$ denote the unique point with
%  $\pi_{i+1}(x_{i+1}^k)=x_i$, and set
%  \begin{equation}
%  \label{flift}
%\tilde f_{i+1}(x_i)\defeq
%  (f_{i+1}\circ\D_i)(x_i)\defeq\sum_{k}
%   \mathcal D_i(e_i')(e_{i+1}^k)f(x_{i+1}^k) \qquad ({\rm on}\,\, e_i')\, .
%  \end{equation}

By % (\ref{mui}),
 (\ref{fs1}), (\ref{flift}), for all $A_{i}\subset X_i$, we have
\begin{equation}
\label{averages}
\int_{A_{i}}
%\tilde f\, 
\I_{\D_i}f_{i+1}\, d\mu_{i}=\int_{\pi_{i}^{-1}(A_{i})}f_{i+1}\, d\mu_{i+1}\, ;
\end{equation}
this also expresses the fact that $\D_i$ is the disintegration of $\mu_{i+1}$
with respect to $\pi_i$ and $\mu_i$ is the pushforward of $\mu_{i+1}$ by
$\pi_i$.

Now suppose $f_{i+1}$ is Lipschitz and let $\Lip\, f_{i+1}(x_{i+1})$ denote the pointwise Lipschitz
constant at $x_{i+1}\in X_{i+1}$.
Let $e_i'$ denote an edge of $X_i'$ and $e_{i+1}\subset \pi_i^{-1}(e_i')$ an edge of
$X_{i+1}$. Since by (\ref{fs2}), the function $\D_i(x_i)(x_{i+1})$ is constant as $x_i$ varies
in ${\rm int}(e_i')$ and $x_{i+1}$ varies in $\pi_i^{-1}(x_i)\cap {\rm int}(e_{i+1})$,
and since the restriction of $\pi_i$ to  $e_{i+1}$ is an isometry,  it follows
that that the restriction of $\I_{\D_i}f_{i+1}$ to ${\rm int}(e_i')$
 is Lipschitz, and

\begin{equation}
\label{pwl1}
\begin{aligned}
\Lip (\I_{\D_i}f_{i+1})(x_i)& \leq\sum_{x_{i+1}\in\pi_i^{-1}(x_i)}\D_i(x_i)(x_{i+1})\Lip\, f_{i+1}(x_{i+1})\\
                              &=\I_{\D_i}(\Lip\, f_{i+1})(x_i)\, .
\end{aligned}
\end{equation}

The following lemma depends crucially on the continuity assumption, Axiom (6) (as well
as on Axiom (4)); see also
(\ref{fs3}).
\begin{lemma}
\label{continuous}
If $f_{i+1}:X_{i+1}\to\R$ is Lipschitz then so is $\I_{\D_i}f_{i+1}$ and 
 for all $x_i\in X_i$ (including $x_i=v_i'$, a vertex of $X_i'$), we have
\begin{equation}
  \label{pwl}
\Lip (\I_{\D_i}f_{i+1})(x_i) \leq
                              \I_{\D_i}(\Lip\, f_{i+1})(x_i)\, .
\end{equation}
\end{lemma}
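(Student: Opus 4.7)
The plan is to reduce the proof to establishing (\ref{pwl}) pointwise at vertices $v_i'$ of $X_i'$; the bound at points in the interior of edges is already handled by (\ref{pwl1}), which rests on the edge-wise constancy (\ref{fs2}). Once (\ref{pwl}) is known pointwise, the right-hand side is uniformly bounded --- each $\D_i(x_i)$ is a probability measure and $\Lip f_{i+1}$ is bounded, since $f_{i+1}$ is Lipschitz --- and the fact that $(X_i,d_i)$ is a geodesic space then forces $\I_{\D_i}f_{i+1}$ to be globally Lipschitz via integration of the pointwise Lipschitz constant along geodesics.

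To handle a vertex $v_i'$, I would fix an edge $e_i'$ of $X_i'$ with endpoint $v_i'$ and take $x_i \in {\rm int}(e_i')$ approaching $v_i'$. Each component $e_{i+1}$ of $\pi_i^{-1}(e_i')$ contains exactly one preimage $\hat{x}_{i+1}(e_{i+1})$ of $x_i$ and exactly one vertex $v_{i+1}(e_{i+1}) \in \pi_i^{-1}(v_i')$. Regrouping the terms of (\ref{flift}) first by $v_{i+1} \in \pi_i^{-1}(v_i')$ and then by $e_{i+1} \in \Star(v_{i+1},X_{i+1}) \cap \pi_i^{-1}(e_i')$, the total weight put on $v_{i+1}$ satisfies
\[
\sum_{e_{i+1} \in \Star(v_{i+1},X_{i+1}) \cap \pi_i^{-1}(e_i')} \D_i(x_i)(\hat{x}_{i+1}(e_{i+1})) = \frac{\mu_{i+1}(\Star(v_{i+1},X_{i+1}) \cap \pi_i^{-1}(e_i'))}{\mu_i(e_i')} = \D_i(v_i')(v_{i+1}),
\]
by (\ref{fs1}) and (\ref{fs3}) respectively; the second equality is where Axiom (6) enters. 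Using this match of total weights together with the fact that $\pi_i$ is an isometry on each $e_{i+1}$ (Axiom (1)), so that $d_{i+1}(\hat{x}_{i+1}(e_{i+1}), v_{i+1}) = d_i(x_i, v_i')$, I would rewrite
\[
\I_{\D_i}f_{i+1}(x_i) - \I_{\D_i}f_{i+1}(v_i') = \sum_{v_{i+1}} \sum_{e_{i+1}} \D_i(x_i)(\hat{x}_{i+1}(e_{i+1}))\bigl[f_{i+1}(\hat{x}_{i+1}(e_{i+1})) - f_{i+1}(v_{i+1})\bigr],
\]
divide by $d_i(x_i,v_i')$, and take the $\limsup$ along $e_i'$; the pointwise definition of $\Lip f_{i+1}$ at each $v_{i+1}$ gives the upper bound $\I_{\D_i}(\Lip f_{i+1})(v_i')$. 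Since any sequence $x_i \to v_i'$ eventually lies in one of the finitely many (at most $\Delta$) edges $e_i'$ emanating from $v_i'$, the full $\limsup$ inherits the same bound, and (\ref{pwl}) holds at $v_i'$.

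The main obstacle is precisely the matching-of-weights step: a priori the edge-wise weights assigned to $\pi_i^{-1}(v_i')$ by $x_i \to v_i'$ along different edges $e_i'$ could depend on $e_i'$, producing a jump in $\I_{\D_i}f_{i+1}$ at $v_i'$ and breaking not merely the estimate (\ref{pwl}) but even continuity there. Axiom (6) is exactly the hypothesis that rules this out, via the identity (\ref{fs3}). All remaining ingredients --- isometry on edges, finiteness of fibers via (\ref{cb}), and the pointwise Lipschitz estimate for $f_{i+1}$ --- are routine bookkeeping.
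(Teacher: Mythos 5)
Your proposal is correct and follows essentially the same route as the paper: regroup the sum defining $\I_{\D_i}f_{i+1}(x_i)$ over the vertices $v_{i+1}\in\pi_i^{-1}(v_i')$, use Axiom (6) via (\ref{fs3}) to match the total weight near each $v_{i+1}$ with $\D_i(v_i')(v_{i+1})$, subtract, divide by $d_i(x_i,v_i')=d_{i+1}(\cdot,v_{i+1})$, and pass to the limit. The only additions are your explicit remarks on approaching $v_i'$ along the finitely many incident edges and on deducing the global Lipschitz bound by integrating along geodesics, both of which the paper leaves implicit.
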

\begin{proof}
%Relation (\ref{ug}) follows from (\ref{averages}) and (\ref{pwl}). 
%Since ($X_i$ is a length space and) for $e_i'$ an edge of $X_i'$, we know
%the restriction of $f_{i+1}\circ \D_i$ to ${\rm int}(e_i')$
% is Lipschitz, to see that  $f_{i+1}\circ \D_i$ is Lipschitz,
%Clearly, it suffices to check that (\ref{continuous}) holds for $x_i=v_i$ a vertex of $X_i$.
%$f_{i+1}\circ \D_i$ to each (closed)
%edge $e_i'$ is Lipschitz and that $f_{i+1}\circ \D_i$ is continuous at  vertices
%of $X_i'$.
Clearly, it suffices to check that (\ref{pwl}) holds for $x_i=v_i'$ a vertex of $X_i'$.
Let  $v_i'$ a vertex of $e_i'$, $y_i\in{\rm int}(e_i')$ and
$v_{i+1}\in \pi_i^{-1}(v_i')$.  Then,
\begin{equation}
\label{con11}
\I_{\D_i}f_{i+1}(y_i)=\sum_{v_{i+1}\in\pi_i^{-1}(v_i')}\,\,\, 
\sum_{y_{i+1}\in\pi_i^{-1}(y_i)\cap \Star(v_{i+1},X_{i+1})}
\D_i(y_i)(y_{i+1})f_{i+1}(y_{i+1})\, .
\end{equation}
and  since the fuzzy section $\D_i$ is  {\bf continuous}, 
\begin{equation}
\label{con2}
\begin{aligned}
\I_{\D_i}f_{i+1}(v_i')&=\sum_{v_{i+1}\in\pi_i^{-1}(v_i')} \D_i(v_i)(v_{i+1})f_{i+1}(v_{i+1})\\
&=\sum_{v_{i+1}\in\pi_i^{-1}(v_i')}\,\,\, 
\sum_{y_{i+1}\in\pi_i^{-1}(y_i)\cap \Star(v_{i+1},X_{i+1})}\D_i(y_i)(y_{i+1})f_{i+1}(v_{i+1})\, .
\end{aligned}
\end{equation}
By subtracting 
 (\ref{con2}) from (\ref{con11}), dividing
through by $d_i(y_i,v_i')=d_{i+1}(y_{i+1},v_{i+1})$ and letting $y_i\to v_i'$, we easily obtain 
(\ref{pwl}).
\end{proof}

\begin{remark}
\label{ug}
We could as well have worked throughout with upper gradients. If
$g_{i+1}$ is an upper gradient for 
$f_{i+1}:X_{i+1}\to \R$, then a similar argument based on the continuity of 
$\D_i$ shows that $\I_{\D_i}g_{i+1}$ is
an upper gradient for $f_i=\I_{\D_i}f_{i+1}$.
\end{remark}
% In general, if $\nu$ is a measure on $U$ and $f:U\to\R$, put
%$$
%f_U=\frac{1}{\nu(U)}\int_{U} f\, d\nu\, ,
%$$

\begin{proposition}
\label{pupi}
% i)--iii), 1), 2) of Theorem \ref{t:ble} 
%and D1)---D3).
Given an admissible  inverse system as in (\ref{is}),
 for all $i$ and $R$, a $(1,1)$-Poincar\'e inequality holds for balls 
$B_r(x_i)\subset X_i$, with $\tau=\tau(\delta,\theta,C)$ and
$\Lambda=2(1+\theta)$.
\end{proposition}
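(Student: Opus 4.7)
The plan is to push a Lipschitz function $f_i:X_i\to\R$ down to a graph $X_j$ at the comparable scale $m^{-j}\sim r$, apply Lemma~\ref{lpi} there, and pull the resulting inequality back to $X_i$ via the averaging operators $\I_{\D_k}$ and their compatibility with the measures and distances. If $r\leq m\cdot m^{-i}$, Lemma~\ref{lpi} with $K=m$ already gives the inequality with $\Lambda=2\leq 2(1+\theta)$, so fix $r>m\cdot m^{-i}$ and let $j\in\{0,\ldots,i-1\}$ be the unique integer with $m^{-j}\leq r<m^{1-j}$. Put $\pi:=\pi_j\circ\cdots\circ\pi_{i-1}:X_i\to X_j$, $x_j:=\pi(x_i)$, and $f_j:=\I_{\D_j}\circ\cdots\circ\I_{\D_{i-1}}(f_i)$.

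By iterating Lemma~\ref{continuous}, $f_j$ is Lipschitz on $X_j$ with $\Lip f_j\leq \I_{\D_j}\circ\cdots\circ\I_{\D_{i-1}}(\Lip f_i)$ pointwise. Iterating~(\ref{eqn_distance_growth}) gives $d_i(a,b)\leq d_j(\pi a,\pi b)+2\theta m^{-j}/(m-1)$, hence $\pi^{-1}(B_s(x_j))\subset B_{s+2\theta r}(x_i)$ for every $s\geq 0$ (using $m^{-j}\leq r$). Then Lemma~\ref{lpi} on $X_j$ applied to $B_r(x_j)$ with $K=m$, combined with the iterated pushforward identity coming from~(\ref{averages}), gives
\[
\int_{B_r(x_j)}|f_j-(f_j)_{B_r(x_j)}|\,d\mu_j \;\leq\; \tau_0\, r\int_{B_{2r}(x_j)}\Lip f_j\,d\mu_j \;\leq\; \tau_0\, r\int_{B_{2(1+\theta)r}(x_i)}\Lip f_i\,d\mu_i,
\]
producing exactly the intended factor $\Lambda=2(1+\theta)$.

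To recover oscillation on $B_r(x_i)$ itself, set $A:=\pi^{-1}(B_r(x_j))$, so that $B_r(x_i)\subset A\subset B_{\Lambda r}(x_i)$, and by~(\ref{averages}) the centered value $c:=(f_i)_A$ equals $(f_j)_{B_r(x_j)}$. The triangle inequality then yields
\[
\int_{B_r(x_i)}|f_i-(f_i)_{B_r(x_i)}|\,d\mu_i \;\leq\; 2\int_A|f_i-f_j\circ\pi|\,d\mu_i + 2\int_{B_r(x_j)}|f_j-c|\,d\mu_j,
\]
and the second term is already controlled by the previous display.

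The main obstacle is the remaining \emph{fiber-oscillation term} $\int_A|f_i-f_j\circ\pi|\,d\mu_i$. Expanding $f_j(\pi y)=\sum_{z\in\pi^{-1}(\pi y)}\D(\pi y)(z)\,f_i(z)$ and using that each fiber of $\pi$ has path diameter $\leq\theta m^{-j}/(m-1)\leq \theta r$ with connecting paths lying in $B_{\Lambda r}(x_i)$, for any two fiber points $y,z$ one has $|f_i(y)-f_i(z)|\leq\int_{\gamma_{y,z}}\Lip f_i\,ds$ along such a path (since $\Lip f_i$ serves as an upper gradient for the Lipschitz function $f_i$ on the rectifiable graph $X_i$). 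I expect the real work to lie in averaging these path integrals against the fuzzy-section weights $\D(\pi y)(z)$ and against $\mu_i$, and using Axiom~(4) together with the iterated cardinality bound~(\ref{cb}) to express the result as an $L^1$-integral of $\Lip f_i$ against $\mu_i$ with multiplicity depending only on $m,\Delta,\theta,C$; the difficulty is obtaining a bound in terms of the pointwise Lipschitz constant rather than an $L^\infty$-type fiber bound. Once this yields a constant $\tau_1=\tau_1(m,\Delta,\theta,C)$, combining it with the previous estimate completes the proof with $\tau=2(\tau_0+\tau_1)$ and $\Lambda=2(1+\theta)$.
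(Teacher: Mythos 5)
Your reduction to the comparable scale $j$, the use of the iterated averaging operators together with Lemma~\ref{continuous} and the identity (\ref{averages}), and the treatment of the coarse-scale term via Lemma~\ref{lpi} all match the first half of the paper's argument. The genuine gap is exactly where you locate it: the fiber-oscillation term $\int_A|f_i-f_j\circ\pi|\,d\mu_i$ is the heart of the proof, and the route you sketch for it does not close. Two concrete problems. First, your appeal to ``the iterated cardinality bound (\ref{cb})'' to get a multiplicity depending only on $m,\Delta,\theta,C$ fails: iterating (\ref{cb}) over the $i-j$ levels between $X_j$ and $X_i$ gives fibers of the composite map $\pi=\pi_j^i$ with cardinality up to $\Delta^{(\theta+1)(i-j)}$, which is unbounded as $i\to\infty$. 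Second, and more seriously, bounding $|f_i(y)-f_i(z)|$ by $\int_{\gamma_{y,z}}\Lip f_i\,ds$ along connecting paths of length $O(\theta r)$ and then integrating over $y\in A$ only yields the desired $\tau r\int_{B_{\Lambda r}(x_i)}\Lip f_i\,d\mu_i$ if you can show that the family of paths $\{\gamma_{y,z}\}$, weighted by $\D(\pi y)(z)\,d\mu_i(y)$, pushes forward to a measure on $X_i$ bounded by a controlled multiple of $\mu_i$. Nothing in Axioms (1)--(5) guarantees this; the paths can concentrate. Making this work is precisely the content of the measured path families of Section~\ref{mlp} and Proposition~\ref{pfml}, whose key uniformity property (\ref{pff}) requires the orientation-invariance of $\Omega$ (Proposition~\ref{meassym}), i.e.\ Axiom (6). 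That is the paper's \emph{second} proof (Section~\ref{s:p2}), not a routine averaging computation.

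The paper's proof of Proposition~\ref{pupi} avoids both problems by not jumping from level $i$ to level $j$ in one step. It telescopes: with $\hat f_k=f_{k-1}\circ\pi_{k-1}^{-1}$, the difference you call $f_i-f_j\circ\pi$ is decomposed as a sum over intermediate levels of terms $f_k-\hat f_k$ (see (\ref{sum})). Each such term has zero average over every fiber $\pi_{k-1}^{-1}(B_{m^{-k}}(x_{k-1,t}))$ by (\ref{av}), so the \emph{local} Poincar\'e inequality of Lemma~\ref{lpi} at scale $m^{-k}$ (applied over a maximal $m^{-k}$-separated net, with bounded multiplicity from local doubling) gives $\int_{U_k}|f_k-\hat f_k|\,d\mu_k\lesssim m^{-k}\int\Lip f_k$, and the geometric series $\sum_{k>j} m^{-k}\lesssim m^{-j}\sim r$ produces the factor $r$. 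Only one-level fiber cardinalities and one-level local geometry enter, so all constants depend only on $m,\Delta,\theta,C$. If you want to complete your write-up, you should either adopt this level-by-level telescoping for the fiber term, or invoke the full path-family machinery of Sections~\ref{mlp}--\ref{s:p2}; the direct fiber-averaging as sketched is not enough.
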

\begin{proof}
Without essential loss of generality,
it suffices to assume $R= 1$. Given $0<r\leq 1$, let $j$ be such that
$$
m^{-(j+1)}<r\leq m^{-j}\, .
$$
Let $B_r(x_i)\subset X_i$.  If $r\leq m^{-i}$ then Lemma \ref{lpi} applies.
Thus, we can assume $m^{-i}<r$.

 For $j+1\leq k< i$, inductively define 
\begin{equation}
\label{xjdef}
\!\!\!\!\!\!\!\!\!\!\!\!\!\!\!\!\!\!\!\!\!\!\!\!\!\!x_k=\pi_{k}\circ\cdots \circ \pi_{i-1}(x_i)\, ,
\end{equation}
\begin{equation}
\label{Ujdef}
\begin{aligned}
U_{j+1} &=B_r(x_{j+1})\, ,\\
U_k&= \pi_k^{-1}(U_{k-1})\qquad j+1\leq k< i\, .
\end{aligned}
\end{equation}
By (\ref{surj}), and induction, we have 
\begin{equation}
\label{contain}
B_r(x_i)\subset U_i\subset B_{(1+\theta)r}(x_i)\, .
\end{equation}

%\begin{equation}
%\begin{aligned}
%Z_j &=B_{2r}(x_j)\, ,\\
%Z_k&= \pi_k^{-1}(Z_{k-1})\qquad j+1\leq k\leq i\, .
%\end{aligned}
%\end{equation}

%Then
%\begin{equation}
%\label{BtoU}
%\begin{aligned}
%B_r(x_k)
%&
%\subset U_k\subset B_{(1+\theta)r}(x_k)\, .
%\qquad j\leq k\leq i\, 
%.\\
%Z_k&\subset B_{2(1+\theta)r}(x_k)\, .
%\end{aligned}
%\end{equation}
% It follows from (\ref{baii}), (\ref{betaest}),
%(\ref{BtoU}) that
%\begin{equation}
%\label{BtoU1}
%\int_{B_r(x_i)} |f_i-(f_i)_{B_r(x_i)}|\, d\mu_i
%\leq C(\theta,c,c_0)\cdot\int_{U_i}|f_{i}-(f_{i})_{U_i}|\, d\mu_i
%\end{equation}

 Given a Lipschitz function $f_i:X_i\to\R$, %put $f=f_i$ and 
set
\begin{equation}
\label{fkdef}
%\begin{aligned}
f_{k-1}=\I_{\D_{k-1}}f_{k}\qquad   j+1\leq k<i\, ,\\
\end{equation}
\begin{equation}
\label{fhatdef}
\hat f_k=f_{k-1}\circ\pi_{k-1}^{-1}\, .
%\end{aligned}
\end{equation}
Then for all $A_{k-1}\subset X_{k-1}$ and $A_k\defeq \pi_{k-1}^{-1}(A_{k-1})$, we have
\begin{equation}
\label{av}
(f_k)_{A_k}=(f_{k-1})_{A_{k-1}}=(\hat f_k)_{A_k}\, .
\end{equation}
%Since ${(f_i)}_{U_i}=(\hat f_i)_{U_i}$
In particular, since $(\hat f_{i})_{U_i}=  ( f_{i-1})_{U_{i-1}} $,  we get
$$
\begin{aligned}
%\frac{1}{2}\int_{B_r(x_i)}|f-f_{B_r(x)}|\, d\mu_i&\leq 
\int_{U_i}|f_{i}-(f_{i})_{U_i}|\, d\mu_i & \leq
\int_{U_i}|f_{i}-\hat f_{i}|\, d\mu_i +
                                                           \int_{U_i}|\hat f_{i}-(\hat f_{i})_{U_i}|\,d\mu_i\\
                                                       &= \int_{U_i}|f_{i}-\hat f_{i}|\, d\mu_i+
                                                           \int_{U_{i-1}}| f_{i-1}-( f_{i-1})_{U_{i-1}}|\, d\mu_{i-1}\, ,
\end{aligned}
$$
and by induction,
\begin{equation}
\label{sum}
%\begin{aligned}
%frac{1}{2} \int_{B_r(x_i)}|f_{i}-(f_{i})_{B_r(x_i)}|\, d\mu_i
\int_{U_i}|f_{i}-(f_{i})_{U_i}|\, d\mu_i\leq \sum^i_{k\geq j+2} \int_{U_k}|f_k-\hat f_k|\, d\mu_k 
 +\int_{B_r(x_{j+1})}|f_{j+1}-{(f_{j+1})}_{B_r(x_{j+1})}|\, d\mu_j\, .
%\end{aligned}
\end{equation}

By (\ref{containment}) and induction, we have 
$$
U_i\subset B_{(1+\theta)r}(x_i)\, .
$$
Using Lemma \ref{lpi}, Lemma \ref{continuous}, (\ref{contain}) and induction, for 
$\tau=\tau(\Delta,\theta,C)$, 
the Poincar\'e inequality on $B_r(x_j)$ gives following estimate 
for the second term on the r.h.s of (\ref{sum}).
\begin{equation}
\label{lpibound}
\begin{aligned}
\int_{B_r(x_j)}| f_j-{( f_j)}_{B_r(x_j)}|\, d\mu_i
&\leq \tau r\cdot \int_{B_r(x_j)}\Lip\, f_j\, d\mu_j\\
&\leq \tau r\cdot \int_{U_i}\Lip\, f_i\, d\mu_i\\ 
&\leq \tau r\cdot\int_{B_{(1+\theta)r}(x_i)}\Lip\, f_i\, d\mu_i\, .
\end{aligned}
\end{equation}
 
Next we estimate the remaining terms on the r.h.s. of (\ref{sum}). For all $j+2\leq k\leq i$,
let $\{x_{k-1,t}\}$ denote  a maximal $m^{-k}$-separated subset of
$U_{k-1}$. 
%For each $t$,pick some $x_{k,t}\in\{\pi_{k-1}^{-1}(x_{k-1,t})\}$ and
%$x_{i,t}\in (\pi_k\circ\cdots\pi_{i-1})^{-1}(x_{k,t})$.
It follows from the local doubling condition that the collection of balls,
$\{B_{m^{-k}}(x_{k,t})\}$ covers $U_k$ and has multiplicity bounded by
a constant $M(\beta)$, with $\beta$ the local doubling constant in Lemma \ref{lpi}.

Set $U_{i,k,t}=(\pi_k\circ\cdots\pi_{i-1})^{-1}(B_{(1+\theta)m^{-k}}(x_{k,t}))$.
By (\ref{av}), we have
$$
(f_k-\hat f_k)_{\pi_{k-1}^{-1}(B_{m^{-k}(x_{k-1,t})})}=0\, .
$$
 Thus, we get
$$
\begin{aligned}
&\int_{\pi_{k-1}^{-1}(B_{m^{-k}}(x_{k-1,t}))}|(f_k-\hat f_k)|\, d\mu_k\\
&{}\,\,\,\,\,\,\,=\int_{\pi_{k-1}^{-1}(B_{m^{-k}}(x_{k-1,t}))}|(f_k-\hat f_k)-(f_k-\hat f_k)_{B_{m^{-k}}(x_{k-1,t})} |\, d\mu_k\\
&{}\,\,\,\,\,\,\,\leq \int_{\pi_{k-1}^{-1}(B_{m^{-k}}(x_{k-1,t}))}|(f_k-\hat f_k)-(f_k-\hat f_k)_{B_{(1+\theta)m^{-k}}(x_{k-1,t})} |\, d\mu_k\\
&{}\,\,\,\,\,\,\,\leq  2\int_{B_{(1+\theta)m^{-k}}(x_{k,t}) }    |(f_k-\hat f_k)-(f_k-\hat f_k)_{B_{(1+\theta)m^{-k}}(x_{k,t})} |\, d\mu_k\\
&{}\,\,\,\,\,\,\,\leq 4\tau(1+\theta)m^{-k}\cdot\int_{B_{(1+\theta)m^{-k}}(x_{k,t}) } 
\Lip\, f_k\, d\mu_k\\
&{}\,\,\,\,\,\,\,\leq 4\tau(1+\theta)m^{-k}\cdot\int_{U_{i,k,t} } 
\Lip\, f_k\, d\mu_k\\
\end{aligned}
$$
where the penultimate inquality comes from using $\Lip\, (f_k-\hat f_k)\leq 2\Lip\, f_k$
and
applying the Poincar\'e inquality on 
$B_{(1+\theta)m^{-k}}(x_{k,t})$. 
By summing this estimate over $t$  and $k$, and using $\bigcup_t U_{i,k,t}
\subset B_{2(1+\theta)r}(x_i)$,
the proof is completed.
\end{proof}

%    To estimate the terms in the sum on the r.h.s of (\ref{sum}), take 
%   $A_{k-1}=A_{k-1,t}=B_{m^{-(k-1)}}(x_{k-1,t})$ and for each $t$,
%   pick some $x_{k,t,s}\in\{\pi_k^{-1}(x_{k,t,s})\}$.
%   Apply the Poincar\'e inequality on each $B_{(1+\theta)m^{-k}}(x_{k,t,s})$,
%   above, using  Lemma \ref{lpi}. As in (\ref{BtoU1}).
%   this gives a  bound %for 
%$$
%\int_{A_{k-1,t}}|f_k-\hat f_k|\, d\mu\, ,
%$$
%   which, when summed over $t$, yields
%   $$
%   \int_{U_k}|f_k-\hat f_k|\, d\mu\leq C_1(\theta,c,c_0)
%   \cdot m^{-k}\cdot \int_{B_{(1+\theta)r}(x_i)}{\rm Lip}\, f_i\, d\mu\, .
%   $$
%   The sum over $k$ of the resulting bound is bounded by
%   a suitable multiple of the geometric series $\sum_km^{-k}$.
%   This, together with  (\ref{lpibound}), completes the proof.

\vskip1mm
\begin{proof}[Proof of Theorem \ref{t:pi}]
We have observed in Proposition \ref{mghc} that $\{(X_n,d_n,\mu_n)\}$ converges to 
$(X_\infty,d_\infty,\mu_\infty)$
in the measured Gromov-Hausdorff sense. Since the doubling condition
and Poincar\'e inequality with uniform constants pass to measured
Gromov-Hausdorff limits \cite{cheeger}, \cite{keith},  the theorem follows from
Propositions \ref{gdoubling}, \ref{pupi}.
\end{proof}

% With suitable interpretation, for the Poincar\'e inequality,
%the behavior of $\tilde f$ and $\widetilde f\circ \pi_{i+1}$ is {\it precisely the
%same}.  Thus we are reduced to comparing the
% $f$ and $\widetilde f\circ \pi_{i+1}$, which can be done with by using the local 
%Poincar\'e inequality as discussed in the previous paragraph.  This leads to
%a mulitiplicative error of size $(1+\eta_i)$.

%\begin{remark}
%\label{infprod}
%The reason the above construction works is that under our assumptions, the
%infinite product $\prod_i(1+\eta_i)$ is {\it convergent}.  A key point is that 
%in passing from $\widetilde f$ to $\widetilde f\circ\pi_{i+1}$, nothing is lost.xis
%\end{remark}

\section{A probability measure on the lifts of a path}
\label{mlp}
In this section we define a probability measure 
$\Omega$ on the set of lifts to $X_i$ ($i>k$)
of a path $\gamma_k$ in $X_k$ and establish a particular property
which is a consequence of Axiom (6); see Proposition
\ref{pfml}.
This  property  plays a role in Section \ref{s:p2},
in which we give an alternative proof of the Poincar\'e inequality. 
The measure $\Omega$ has an interpretation in
terms of Markov chains which is explained in Remark \ref{markov} at the end of the 
section;  it  
also enters  
in Section \ref{eym}, in which we construct examples of admissible inverse
systems. We begin with the case $i=k+1$ from which the general case follows
easily.

A {\bf vertex path} in $X_k'$ is a sequence of vertices $v_{0,k}',\ldots,v_{N+1,k}'$ such that 
each pair of consecutive vertices  are the vertices of an edge %$e_{i,j}'$ 
of $X_k'$.
Associated to a vertex path is the {\bf path} $\gamma_{k}'=e_{0,k}'\cup\cdots\cup e_{N,k}'$,
which we will always assume is parameterized by arclength.
Similarly, we define a {\bf path} $\gamma_{k+1}=e_{0,k+1}\cup\cdots\cup e_{N,k+1}$ in
$X_{k+1}$ associated to $v_{0,k+1},\ldots,v_{N+1,k+1}$.
We denote by $\Gamma$, the (finite) collection of all $\gamma_{k+1}$ that are lifts of 
$\gamma_k'$.

Below, given $e_k'$ and a lift $e_{k+1}$, by slight abuse of notation (compare (\ref{fs1})) we write
\begin{equation}
\label{sa}
\D_k(e_k')(e_{k+1})\defeq\frac{\mu_{k+1}(e_{k+1})}{\mu_k(e_k')}\, .
\end{equation}

Define a measure $\Omega$ on $\Gamma$ by setting
\begin{equation}
\label{measdef1}
\Omega(\gamma_{k+1}) \defeq\D_k(e_{0,k}')(e_{0,k+1})\times
\left(\frac{\D_k(e_{1,k}')(e_{1,k+1})}{\D_k(v_{1,k}')(v_{1,k+1})}\right)\times\cdots\times
\left(\frac{\D_k(e_{N,k}')(e_{n,k+1})}{\D_k(v_{N,k}')(v_{N,k+1})}\right)\, ,
\end{equation}
where by (\ref{fs3}), we can write
\begin{equation}
\label{fs3'}
\D_k(v_{j,k}')(v_{j,k+1}) %=\frac{\mu_{i+1}(\pi_i^{-1}(e_i')\cap \Star(v_{i+1},X_{i+1}))}{\mu_i(e_i')}\\
=\sum_{\underline{e}_{j,k+1}\in \pi_k^{-1}(e_{j,k}')\cap \Star(v_{j,k+1})}\frac{\mu_{k+1}(\underline{e}_{j,k+1})}{\mu_k(e'_{j,k})}\, .
\end{equation}

For a path, $\gamma_k'=e_{0,k}'$, consisting of a single edge, and a lift, 
$\gamma_{k+1}=e_{0,k+1}$, we just have 
\begin{equation}
\label{ge}
\Omega(e_{0,k+1})=\D_k(e_{0,k}')(e_{0,k+1})\, .
\end{equation} 
Since $\D_k(x_{0,,k}')(\,\cdot\,)$ is a probability measure,  it follows directly from 
the definitions
that $\Omega$ is a probability measure in this case.

We now check an important  property of 
$\Omega$ which in particular, implies that $\Omega$ is a probability measure for arbitrary
$\gamma_k'$; see (\ref{equal2}). Let $\psi_k'$ denote a path consisting of $N+1$ edges obtained from $\gamma_k'$ by 
adjoining
a single edge $e_{N+1,k}'$. Let $\Psi$ denote the collection of all 
lifts of $\psi'_k$ and let $\Omega_{\psi'_{k+1}}$ denote the measure 
%on $\Gamma_{k+1}$and 
on $\Psi$ (defined as in (\ref{measdef1})).
$\overline{\Psi}$ denote the collection of lifts of 
$\psi_k'$ containing the {\it fixed} lift $\gamma_{k+1}$
of $\gamma_k'$.
Then it follows from (\ref{sa}) and (\ref{measdef1}), together with
 (\ref{fs3'}) applied to the vertices $v_{N+1,k}'$, 
$v_{N+1,k+1}$,
that
\begin{equation}
\label{equal2}
\Omega_{\psi'_{k+1}}(\ol{\Psi})=\Omega(\gamma_{k+1})\, .
\end{equation}
It now follows by induction that $\Omega$ is a probability measure for arbitrary 
$\gamma_k'$; compare Remark \ref{markov}.
%where for clarity,  we have written $\Omega_{\psi'_{k+1}}$ for the measure 
%on $\Gamma_{k+1}$and 
%on the collection of {\it all} lifts of $\psi_{k+1}'$.

\begin{remark}
Note that  if we understand (\ref{fs3'}) to be the definition $\D_k(v_{j,k}')(v_{j,k+1})$
then the discussion to this point has not made use of Axiom (6).  
\end{remark}

Recall that Axiom (6) implies that $\D_k(v_{j,k}')(v_{j,k+1})$ depends only on $v_{j,k}',v_{j,k+1}$,
and in particular (compare (\ref{fs3'})) we also have
\begin{equation}
\label{fs3''}
\D_k(v_{j,k}')(v_{j,k+1}) %=\frac{\mu_{i+1}(\pi_i^{-1}(e_i')\cap \Star(v_{i+1},X_{i+1}))}{\mu_i(e_i')}\\
=\sum_{\underline{e}_{j-1,k+1}\in \pi_k^{-1}(e_{j-1,k}')\cap \Star(v_{j,k+1})}\frac{\mu_{k+1}(\underline{e}_{j-1,k+1})}{\mu_k(e'_{j-1,k})}\, .
\end{equation}
If we rewrite the expression in (\ref{measdef1}) for $\Omega$ as
\begin{equation}
\label{measdef}
\Omega(\gamma_{k+1}) =
\frac{\D_k(e_{0,k}')(e_{0,k+1})\times \cdots \times \D_k(e_{N,k}')(e_{N,k+1})}{\D_k(v_{1,k}')
(v_{1,k+1})\times\cdots\times\D_k(v_{N,k}')(v_{N,k+1})}\, , 
\end{equation}
 we easily obtain:

 \begin{proposition}
\label{meassym} For an admissible inverse system,  the measure $\Omega$ is invariant under
the operation of reversing the orientations of $\gamma_k'$, $\gamma_{k+1}$.
\end{proposition}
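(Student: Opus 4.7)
The plan is to read off the invariance directly from the rewritten formula (\ref{measdef}), by checking that both the numerator and the denominator are unchanged when the orientations of $\gamma_k'$ and $\gamma_{k+1}$ are reversed. Once this is done, the use of Axiom (6) will be concentrated entirely in the denominator.

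First I would deal with the numerator $\D_k(e_{0,k}')(e_{0,k+1})\cdots \D_k(e_{N,k}')(e_{N,k+1})$. Each factor equals $\mu_{k+1}(e_{j,k+1})/\mu_k(e_{j,k}')$ by (\ref{sa}), and this quantity is intrinsic to the pair of edges $(e_{j,k}', e_{j,k+1})$; it has no dependence on a choice of orientation. Reversing the orientation of $\gamma_k'$ and $\gamma_{k+1}$ merely reverses the order of these factors in the product, so the numerator is preserved.

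Next I would handle the denominator, which is a product over the interior vertices $v_{1,k}',\ldots,v_{N,k}'$ and their chosen lifts $v_{1,k+1},\ldots,v_{N,k+1}$. Under reversal, the \emph{set} of interior vertices (and the set of their lifts) is unchanged; only the indexing is reversed. The only subtlety is that the formula (\ref{fs3'}) defining $\D_k(v_{j,k}')(v_{j,k+1})$ uses the edge $e_{j,k}'$ that goes \emph{forward} from $v_{j,k}'$, whereas after reversal the forward edge from $v_{j,k}'$ is $e_{j-1,k}'$. This is precisely where Axiom (6) enters: by (\ref{fs3''}), the value obtained from the backward edge $e_{j-1,k}'$ agrees with the value obtained from the forward edge $e_{j,k}'$, so $\D_k(v_{j,k}')(v_{j,k+1})$ is genuinely a function of the vertices $(v_{j,k}', v_{j,k+1})$ alone. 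Consequently each factor in the denominator is unaffected by the reversal.

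Combining the two observations yields $\Omega(\gamma_{k+1}) = \Omega(\overline{\gamma_{k+1}})$, where $\overline{\gamma_{k+1}}$ denotes the reverse lift of the reverse path $\overline{\gamma_k'}$. The main (and only) substantive point is the appeal to Axiom (6) for the denominator; without it the expression would depend on the choice of forward edge at each interior vertex and the symmetry would fail. I do not anticipate any additional obstacle, since the numerator invariance and the combinatorial identification of interior vertices under reversal are immediate.
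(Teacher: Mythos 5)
Your proof is correct and is essentially the paper's own argument: the paper likewise rewrites $\Omega$ in the symmetric form (\ref{measdef}), notes that each numerator factor is intrinsic to the edge pair, and invokes Axiom (6) via (\ref{fs3''}) to make each denominator factor depend only on the vertex pair, so that reversal merely reorders the factors. No gaps.
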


It follows immediately from Proposition \ref{meassym}, that (\ref{equal2}) also
 holds if the additional edge is adjoined
at the begining  of $\gamma_k'$ rather than at the end. From  this  and an
argument  by induction, we get the
following:
For arbitrary $\gamma_k'$, if $\psi_k'$ is any path containing $\gamma_k'$,
$\gamma_{k+1}$ is any fixed lift of $\gamma_k'$ and $\ol{\Psi}$ denotes the collection
of all lifts of $\psi_k'$ containing $\gamma_{k+1}$ then  (\ref{equal2}) holds.  This gives:
\begin{corollary}
If $e_{j,k}'$ is any edge 
contained in $\gamma_k'$, $e_{j,k+1}\in \pi^{-1}(e_{j,k}')$ and $\overline{\Gamma}$
denotes the collection of lifts of $\gamma_k'$ which contain $e_{j,k+1}$, then
\begin{equation}
\label{edge}
\Omega(\overline{\Gamma})=\D_k(e_{j,k}')(e_{j,k+1})
=\frac{\mu_{k+1}(e_{j,k+1})}{\mu_k(e'_{j,k})}\, .
\end{equation}
\end{corollary}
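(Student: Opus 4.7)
The plan is to deduce the corollary as a direct specialization of the generalized form of (\ref{equal2}) established in the paragraph immediately preceding the statement. Specifically, in that discussion one is allowed to choose an arbitrary path $\gamma_k'$, a fixed lift $\gamma_{k+1}$, and any enlargement $\psi_k' \supset \gamma_k'$. I would apply this with the ``inner'' path taken to be the single edge $e_{j,k}'$ (with designated lift $e_{j,k+1}$), and the ``outer'' path $\psi_k'$ taken to be the $\gamma_k'$ of the corollary. The generalized form of (\ref{equal2}) then asserts that $\Omega(\overline{\Gamma})$ equals the $\Omega$-measure assigned to $e_{j,k+1}$ by the single-edge path $e_{j,k}'$.

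For a single-edge path, formula (\ref{ge}) gives $\Omega(e_{j,k+1}) = \D_k(e_{j,k}')(e_{j,k+1})$, and by the abuse-of-notation identity (\ref{sa}) this equals $\mu_{k+1}(e_{j,k+1})/\mu_k(e_{j,k}')$. Chaining the two identities yields both equalities claimed in the corollary.

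The real content therefore lies not in the proof of the corollary itself but in the justification of the generalized form of (\ref{equal2}) invoked above. That form permits the ``extra'' edge to be adjoined at either end of the current path, and the induction on the number of adjoined edges proceeds by alternately applying (\ref{equal2}) as written (for adjoining at the end) and its counterpart for adjoining at the beginning. The latter is obtained from the former by the orientation-reversal symmetry of $\Omega$, namely Proposition \ref{meassym}, which is precisely where Axiom (6) enters. I expect this step to be essentially a bookkeeping verification rather than a real obstacle: one constructs a chain $e_{j,k}' \subset \gamma_k^{(1)} \subset \cdots \subset \gamma_k'$ in which each $\gamma_k^{(\ell+1)}$ extends $\gamma_k^{(\ell)}$ by a single edge at one of its two ends, and then inductively verifies that the total $\Omega$-mass of lifts containing the fixed edge $e_{j,k+1}$ is preserved at every step.
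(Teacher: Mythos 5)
Your proposal is correct and follows the paper's own route exactly: the paper derives the corollary precisely by extending (\ref{equal2}) to edges adjoined at either end via Proposition \ref{meassym}, inducting to arbitrary containing paths, and then specializing the inner path to the single edge $e_{j,k}'$ so that (\ref{ge}) and (\ref{sa}) give the two claimed equalities.
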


Next, we give a consequence of (\ref{edge}) which is used in the 
alternate proof of the Poincar\'e inequality given in
Section \ref{s:p2}.

Suppose that $\gamma_k'$ 
is the subdivision of a  path in $X_k$ consisting of the union of
$L$ edges $e_{0,k}\cup\cdots\cup e_{L,k}$
 of $X_k$. (Thus, $\gamma_k'$ has $L\cdot m$ edges $e_{j,k}'$.)
Assume that $\gamma_k'$ is parameterized by arclength.
Define \hbox{$\Phi:\Gamma\times [0,L\cdot m^{-k}]\to X_k$} by
$$
 \Phi(\gamma_{k+1},t)=\gamma_{k+1}(t)
$$
Let $\L$ denote Lebesgue measure on $X_{k+1}$.

We claim that on any fixed $e_{\ell,k}$ in the domain of $\gamma_{k}$, we have
$$
\Phi_*(\Omega\times \L)= \frac{m^{-k}}{\mu_{k+1}(\pi_k^{-1}(e_{\ell,k}))}\cdot \mu_{k+1}\, ,
$$
where $\Phi_*$ denotes push forward under the map $\Phi$.
To see it, note that for any $e_{j,k+1}$ we have
$$
\L=\mu_{k+1}\cdot\frac{m^{-(k+1)}}{\mu_{k+1}(e_{j,k+1})}\, ,
$$
If  $e_{j,k}'\subset e_{\ell,k}$ and $e_{j,k+1}\subset  \pi_k^{-1}(e_{j,k}')$, then
on $e_{j,k+1}$ we have
by (\ref{edge})
$$
\Phi_*(\Omega\times \L)=\frac{\mu_{k+1}(e_{j,k+1})}{\mu_k(e_{j,k}')}\cdot\L\, .
$$
Combining the previous two relations gives
\begin{equation}
\label{pf}
\begin{aligned}
\Phi_*(\Omega\times \L)&=\frac{m^{-(k+1)}}{\mu_k(e'_{j,k})}\cdot \mu_{k+1}\\
&=\frac{m^{-k}}{\mu_{k+1}(\pi_k^{-1}(e_{\ell,k}))}\cdot \mu_{k+1}\, ,
\end{aligned}
\end{equation}
 where the last equality follows by because $\mu_k$ is a constant multiple of Lebesgue measure
on $e_{\ell,k}$  and ${(\pi_k)}_*(\mu_{k+1})=\mu_k$.

Finally, we give a generalization of the above. Put
 $\pi_k^i=\pi_{k}\circ \cdots \circ \pi_{i-1}$.
Write $X_k^i$ for $X_k$ with each of its edges subdivided into 
edges of length $m^{-(i-1)}$. Then $\pi_k^i$ is maps edges of $X^i$
to edges of $(X_k^i)'$ It is easy to see that after  rescaling
of the metric and measure on both $X_k^i$ and $X_i$ by a factor $m^{i-1}$,
Axioms (1)--(6) are satisfied  (where the verification
of Axiom (6) is by induction).  In addition, the $X_k^i$ with rescaled
metric has  the  property that the rescaled $\mu_i$ is a
constant multiple of $\L$ on the edges of the rescaled $X_k$ (which have length
$m^{i-k-1}$ in the rescaled metric).  As a consequence,
by the same argument which led to  (\ref{pf}), we get:
\begin{proposition}
\label{pfml}
Let $\gamma_k$ denote a path in $X_k$ which is the union of edges $e_k$ 
of $X_k$ and let $\gamma_k^i$ denote its
subdivision in $X_k^i$. If  $\Gamma $ denotes collection 
of lifts of $\gamma_k^i\subset X_k^i$ to $X_i$, then there is a  probability 
measure $\Omega$ on $\Gamma$ such that 
\begin{equation}
\label{pff}
\Phi_*(\Omega\times \L)
=\frac{m^{-k}}{\mu_{i}((\pi_k^i)^{-1}(e_{\ell,k}))}\cdot 
\mu_{i}\,\qquad ({\rm on}\,\,e_{\ell,k} ).
\end{equation}
\end{proposition}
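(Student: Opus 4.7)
The plan is to reduce to the one-step case already analyzed in the derivation of (\ref{pf}), by viewing $\pi_k^i : X_i \to X_k^i$, together with the rescaled metrics and measures, as itself a single step of an admissible inverse system (with the role of $m$ played by $m^{i-k}$). This is precisely the reformulation sketched in the paragraph immediately preceding the proposition; the work to be done is to justify that reformulation and then transport the single-step construction of $\Omega$ and the pushforward calculation across it.

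The first task is to verify the paper's assertion that after rescaling both $X_k^i$ and $X_i$ by $m^{i-1}$, Axioms (1)--(6) hold for the two-level system $X_k^i \stackrel{\pi_k^i}{\longleftarrow} X_i$. Axioms (1), (2), (3), (5) are immediate by composition: the iterated projection $\pi_k^i = \pi_k \circ \cdots \circ \pi_{i-1}$ is simplicial, open, and isometric on edges, and the fiber diameter bound is controlled by the telescoping sum of the one-step bounds, giving a constant depending only on $\theta$ and $m$. Axiom (4) likewise follows by iterating the bound $C$ from the original system through a combinatorially controlled number of steps. The substantive point is Axiom (6) for the composite, which I would verify by induction on $i-k$: the base case $i-k=1$ is exactly Axiom (6) for $\pi_k$, and for the inductive step one writes $\pi_k^i = \pi_k^{i-1} \circ \pi_{i-1}$ and uses that the disintegration of $\mu_i$ with respect to $\pi_k^i$ is the iterated composition of the single-step disintegrations $\D_k, \ldots, \D_{i-1}$; the continuity of each factor at vertices (the content of Axiom (6) for the original system, as encoded in Lemma \ref{fsp}) propagates through composition to yield continuity of the composite disintegration. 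This is the main obstacle in the proof.

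With this in hand, the single-step construction of this section, applied to $\pi_k^i$ in place of $\pi_k$, produces a probability measure $\Omega$ on $\Gamma$ via the direct analogue of (\ref{measdef1}) with the iterated disintegration replacing $\D_k$. The pushforward identity (\ref{pff}) then follows by running the argument that established (\ref{pf}) verbatim: on each edge $e_{\ell,k}$ of $X_k$, $\mu_k$ restricts to a constant multiple of Lebesgue measure, $\mu_i$ restricts to a constant multiple of Lebesgue measure on each edge of $X_i$ contained in $(\pi_k^i)^{-1}(e_{\ell,k})$, and the composite version of (\ref{edge}) supplies the ratio, which combined with $(\pi_k^i)_* \mu_i = \mu_k$ collapses to the constant $m^{-k} / \mu_i((\pi_k^i)^{-1}(e_{\ell,k}))$ on $e_{\ell,k}$.
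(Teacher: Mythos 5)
Your proposal follows the paper's own route exactly: the paper likewise treats the rescaled two-level system $X_k^i \stackrel{\pi_k^i}{\longleftarrow} X_i$ as a single admissible step, notes that Axioms (1)--(6) hold for it (with Axiom (6) verified by induction, which you spell out via composition of the one-step disintegrations), and then invokes the argument that produced (\ref{pf}). The only cosmetic difference is your parenthetical that the subdivision parameter becomes $m^{i-k}$, whereas the paper keeps the parameter equal to $m$ by taking the subdivided complex $X_k^i$ as the base; this does not affect the argument.
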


\begin{remark}
\label{markov}
 The definition of $\Omega$ in (\ref{measdef})
can be understood in terms of Markov chains. This gives a more general perspective 
on why it is a probability measure.
Associated to $\gamma_{k+1}'$ is a discrete time Markov chain whose
collection of states is $\bigcup_{j=0}^N (\pi_k^{-1}(e_{k,j}'),j)$.
The probability of being in a state $(e_{j,k+1},j)$ at time $0$ is $0$ unless
$j=0$,
in which case the probability is $\D(e_{0,k})(e_{0,k+1})$.
The probability of transition from a state $(e_{j_1,k+1},j_1)$ at time $j$
to a state $(e_{j_2,k+1},j_2)$ at time $j+1$ is $0$ unless $j_1=j,\,\, j_2=j+1$
and there exists $\gamma_{k+1} \in \Gamma$ such that
$e_{j,k+1}, \, e_{j+1,k+1}$ are consecutive edges of $\gamma_{k+1}$ with common
vertex $v_{j+1,k+1}$, and such that $e_{j_1,k+1}=e_{j,k+1}$ and
$e_{j_2,k+1}=e_{j+1,k+1}$.
In this case the transition probability is
$$
\frac{\D(e_{j+1,k}')(e_{j+1,k+1})}{\D(v_{j+1,k}')(e_{j+1,k+1})}\defeq
\frac{\mu_{k+1}(e_{j,k+1})}{\sum_{\underline{e}_{j,k+1}\in\pi_k^{-1}(e_{j,k}')\cap \Star(v_{j,k+1})}
\mu_{k+1}(\underline{e}_{j,k+1})}\, ;$$

For this Markov chain, the probability of observing a sequence of states
$(e_{j_0,k+1},0), (e_{j_1,k+1},1), \dots, (e_{j_N,k+1},N)$
is zero unless there exists $\gamma_{k+1}=e_{0,k+1}\cup\cdots\cup e_{N,k+1}
\in \Gamma$, with $e_{j_0,k+1}=e_{0,k+1},\ldots, e_{j_N,k+1}=e_{N,k+1}$,
in which case this probability is $\Omega(\gamma_{k+1})$.

Note that the in above discussion we need not assume that 
Axiom (6) holds. However, this assumption is required for
 Proposition \ref{meassym} whose consequence,
Proposition \ref{pfml}, is crucial for the alternate proof of the Poincar\'e inequality given in the next section.
\end{remark}

\section{A proof of the Poincar\'e inequality using measured
 path families}
\label{s:p2}

In this section we give an second proof based on measured path families that
the Poincar\'e inequality holds for $(X_\infty,d_\infty,\mu_\infty)$.\footnote{ As a matter of convenience,
some of the notational conventions of this section are
somewhat  at variance with those of other sections and (given that this is 
our second proof of the Poincar\'e inequality) the style of presentation is
slightly more informal.}
This is closer in spirit to other proofs of the Poincar\'e
inequality \cite{semmes}.
%, the basic strategy has something in common
%with that employed in Section \ref{s:piproof}. 
%For $j\leq k$, define $\pi_j^k:X_k\to X_j$ by $\pi_j^k  =\pi_j\circ\cdot\circ\pi_{k-1}$.

%\begin{proposition}
%\label{prop-measured_path_family}
Suppose $k\leq i$, $v_k$ is a vertex of $X_k$, $e_{0,k}, e_{1,k}$ are edges belonging to 
the star of $v_k$ in $X_k$,  and $Z_\ell=(\pi_k^i)^{-1}(e_{\ell,k})\subset X_i$ for $\ell\in \{0,1\}$.
Let 
%$\eta:[0,2m^{-k}]\ra X_k$ 
$\gamma_k:[0,2m^{-k}]\ra X_k^i$ 
denote a unit speed parametrization of the path
$e_{0,k}\cup e_{1,k}$ and $\gamma_k^i$ its subdivision in $X^i_k$.
 Let $\Ga$ denote the 
%compact 
space of lifts $\gamma_i:[0,2m^{-k}]\ra X_i$ of 
$\gamma_k^i$
%$\eta$
 and let $\Omega$ denote the probability measure on $\Gamma$ constructed
in Section \ref{mlp}.
Let $\Phi:\Gamma\times [0,2m^{-k}]\ra Z_0\cup Z_1\subset X_i$ denote the tautological map
$(s,\gamma_{i})\mapsto \gamma_{i}(s)$.

Recall from (\ref{igdf}) the definition of an upper gradient $g$ of a function 
$f$ on a metric space.

\begin{lemma}
\label{cor-adjacent_average}
Let $k<  i,\,Z_0,Z_1$ are as above.
%as in Proposition \ref{prop-measured_path_family},
Let  $u:X_i\ra \R$ denote a Lipschitz function
and $g:X_i\ra \R$ an upper gradient for $u$.  Then 
$$
\left|\av_{Z_0}u\,d\mu_i -\av_{Z_1}u\,d\mu_i \right|
\leq \hat Cm^{-k}\av_{Z_0\cup Z_1}g\,d\mu_i\,.
$$
\end{lemma}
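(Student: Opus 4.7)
The plan is to average the function $u$ along the measured path family $(\Gamma, \Omega)$ supplied by Proposition \ref{pfml}, applied to the two-edge path $\gamma_k = e_{0,k} \cup e_{1,k}$ in $X_k$. Using the pushforward identity $\Phi_*(\Omega \times \L) = \frac{m^{-k}}{\mu_i(Z_\ell)} \mu_i$ on $Z_\ell$ (with $Z_\ell = (\pi_k^i)^{-1}(e_{\ell,k})$), I first rewrite the averages as
$$
\av_{Z_0} u \, d\mu_i = \frac{1}{m^{-k}} \int_\Gamma \int_0^{m^{-k}} u(\gamma_i(s)) \, ds \, d\Omega(\gamma_i),
$$
and similarly for $Z_1$ with $s$ ranging over $[m^{-k}, 2m^{-k}]$.

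Next, I take the difference and estimate it pathwise. For each fixed lift $\gamma_i \in \Gamma$, I change variables so that
$$
\int_0^{m^{-k}} u(\gamma_i(s))\,ds - \int_{m^{-k}}^{2m^{-k}} u(\gamma_i(s))\,ds
= \int_0^{m^{-k}} \bigl[u(\gamma_i(s)) - u(\gamma_i(s+m^{-k}))\bigr]\,ds,
$$
and bound each integrand using the upper gradient property along the subpath $[s, s+m^{-k}] \subset [0, 2m^{-k}]$:
$$
|u(\gamma_i(s)) - u(\gamma_i(s+m^{-k}))| \leq \int_0^{2m^{-k}} g(\gamma_i(t))\,dt.
$$
Integrating in $s$, then in $\Omega$, yields $|\av_{Z_0} u - \av_{Z_1} u| \leq \int_\Gamma \int_0^{2m^{-k}} g(\gamma_i(t))\,dt\,d\Omega(\gamma_i)$.

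Finally, I apply Proposition \ref{pfml} in the other direction to convert the iterated integral of $g$ back into integrals over $Z_0$ and $Z_1$, obtaining
$$
|\av_{Z_0} u - \av_{Z_1} u| \leq m^{-k}\left(\av_{Z_0} g\,d\mu_i + \av_{Z_1} g\,d\mu_i\right).
$$
To package the right side as $\av_{Z_0 \cup Z_1} g\,d\mu_i$, I invoke Axiom (4): since $(\pi_k^i)_*\mu_i = \mu_k$, one has $\mu_i(Z_\ell) = \mu_k(e_{\ell,k})$, and the ratio $\mu_i(Z_0)/\mu_i(Z_1)$ lies in $[C^{-1}, C]$ because $e_{0,k}, e_{1,k}$ share the vertex $v_k$. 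Hence $\mu_i(Z_\ell) \geq (1+C)^{-1} \mu_i(Z_0 \cup Z_1)$, which converts the sum of averages into a constant multiple of $\av_{Z_0 \cup Z_1} g \,d\mu_i$, giving the claimed inequality with $\hat C$ depending only on $C$.

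The main point that requires care is the direction of the pushforward argument: one has to verify that restricting $\Phi$ to $\Gamma \times [0,m^{-k}]$ lands in $Z_0$ (and similarly for $Z_1$), so that the pushforward formula from Proposition \ref{pfml} may be applied separately over the two time intervals. The underlying reason this works — and hence the only place the continuity Axiom (6) enters — is the orientation-symmetry of $\Omega$ established via Proposition \ref{meassym}, which guarantees that $\Omega$ genuinely is a probability measure respecting the concatenation structure at the common vertex $v_k$.
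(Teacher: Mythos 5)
Your argument is correct and follows essentially the same route as the paper: both rewrite the two averages via the pushforward identity of Proposition \ref{pfml}, pair up the points $\gamma_i(s)$ and $\gamma_i(s+m^{-k})$ along each lift, apply the upper gradient inequality, and convert back to integrals against $\mu_i$ using the pushforward formula and the Axiom (4)/(5) comparability of $\mu_i(Z_0)$ and $\mu_i(Z_1)$. The only (harmless) difference is that you bound each increment by the integral of $g$ over the whole path $[0,2m^{-k}]$ rather than over the subpath $[s,s+m^{-k}]$, which lets you skip the Fubini swap in the shifted variable that the paper performs.
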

\begin{proof}
With Axiom (4) and (\ref{pff}) of Proposition \ref{pfml} (which is used twice below) we get:
\begin{align*}
\left|\av_{Z_0}u\,d\mu_i \right. &- \left.\av_{Z_1}u\,d\mu_i \right|\\
&=\left|   \av_{[0,m^{-k}]\times \Ga}(u\circ \Phi) \;d(\L\times\Omega)
-   \av_{[m^{-k},2m^{-k}]\times \Ga}(u\circ \Phi) \;d(\L\times\Omega)\right|\\
&\leq \av_{[0,m^{-k}]\times\Ga}  \left| u(\gamma_{i}(t))-u(\gamma_{i}(t+m^{-k}))\right|2
%&\leq \av_{[0,m^{-k}]\times\Ga}  \left| u(\gamma_{k+1(t))-u(\gamma_{k+1(t+m^{-k}))\right|2
\;d\L(t)\;d\Omega(\eta)  \\
&\leq \av_{[0,m^{-k}]\times\Ga}\int_{[0,m^{-k}]}
g\circ\gamma_{i}(t+s)\;d\L(s)\;d\L(t)\;d\Omega(\hat\eta)\\
&=\int_{[0,m^{-k}]}\left(\av_{[0,m^{-k}]\times\Ga}
g\circ\gamma_{i}(t+s)\;d\L(t)\;d\Omega(\gamma_i)\right)\;d\L(s)\\
&\leq\hat C \int_{[0,m^{-k}]}\left( \av_{Z_0\cup Z_1}g\;d\mu_i   \right)\;d\L(s)\\
&=\hat C\; m^{-k}\av_{Z_0\cup Z_1}g\;d\mu_i\,.
\end{align*}
\end{proof}

\begin{theorem}
$(X_\infty,d_\infty,\mu_\infty)$ satisfies a Poincar\'e inequality.
\end{theorem}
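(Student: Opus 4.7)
My plan is to extract from Lemma \ref{cor-adjacent_average} a $(1,1)$-Poincar\'e inequality on each $X_i$ with constants depending only on $m, \Delta, \theta, C$, and then pass to the measured Gromov-Hausdorff limit via \cite{cheeger,keith} and Proposition \ref{mghc}.

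Fix a ball $B = B_r(x_i) \subset X_i$. If $r \leq m^{-i}$ the local Poincar\'e inequality (Lemma \ref{lpi}) suffices, so assume $m^{-(k+1)} < r \leq m^{-k}$ for some $k < i$, and put $x_k = \pi_k^i(x_i)$. Iterating (\ref{containment}) gives $\Lambda_0 = \Lambda_0(\theta, m)$ and $\Lambda = \Lambda(\theta, m)$ with
$$
B \subset (\pi_k^i)^{-1}(B_{\Lambda_0 r}(x_k)) \subset B_{\Lambda r}(x_i).
$$
Let $\mathcal{E}$ denote the set of edges of $X_k$ meeting $B_{\Lambda_0 r}(x_k)$; by Axiom (1), $|\mathcal{E}| \leq N = N(m, \Delta, \theta)$, and the subgraph of $X_k$ spanned by $\mathcal{E}$ is connected. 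Write $Z_e = (\pi_k^i)^{-1}(e) \subset X_i$ for $e \in \mathcal{E}$.

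Given a Lipschitz function $u$ on $X_i$ with upper gradient $g$, any two edges $e, e' \in \mathcal{E}$ can be joined by a chain of edges in $\mathcal{E}$ of length $\leq N$ in which consecutive edges share a vertex. Applying Lemma \ref{cor-adjacent_average} along this chain, and using Axioms (4), (5) and Lemma \ref{gdoubling} to absorb each $\av_{Z_{e_j} \cup Z_{e_{j+1}}} g \, d\mu_i$ into a single average over $(\pi_k^i)^{-1}(B_{\Lambda_0 r}(x_k))$, yields
$$
\left| \av_{Z_e} u \, d\mu_i - \av_{Z_{e'}} u \, d\mu_i \right| \leq N' m^{-k} \av_{B_{\Lambda r}(x_i)} g \, d\mu_i.
$$
Fix any $e_* \in \mathcal{E}$ and set $c = \av_{Z_{e_*}} u\, d\mu_i$. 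Splitting $\int_B |u - c|\, d\mu_i$ by cylinder, controlling each local oscillation $\int_{B \cap Z_e} |u - \av_{Z_e} u|\, d\mu_i$ by Lemma \ref{lpi} applied at the scale $m^{-k} \approx r$, and combining this with the chain estimate produces
$$
\av_B |u - u_B| \, d\mu_i \leq 2 \av_B |u - c| \, d\mu_i \leq \tau r \av_{B_{\Lambda r}(x_i)} g \, d\mu_i,
$$
with $\tau = \tau(m, \Delta, \theta, C)$ and $\Lambda = \Lambda(\theta, m)$ independent of $i$.

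The main obstacle will be the measure-theoretic bookkeeping needed to verify that $\mu_i(B)$, $\mu_i(Z_e)$, and $\mu_i((\pi_k^i)^{-1}(B_{\Lambda_0 r}(x_k)))$ are mutually comparable with constants depending only on the admissibility data; this should follow by iterating Axiom (5), applying Axiom (4) along bounded chains of edges, and invoking Lemma \ref{gdoubling}. Given the resulting uniform $(1,1)$-Poincar\'e inequality on the $X_i$, the theorem follows from Proposition \ref{mghc} together with the standard stability of doubling and the Poincar\'e inequality under measured Gromov-Hausdorff convergence \cite{cheeger,keith}.
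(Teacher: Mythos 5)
Your overall architecture (reduce to a uniform $(1,1)$-Poincar\'e inequality on each $X_i$, then pass to the limit via \cite{cheeger,keith}) and your use of Lemma \ref{cor-adjacent_average} to chain averages over adjacent cylinders $Z_e=(\pi_k^i)^{-1}(e)$ match the paper's strategy. But there is a genuine gap at the step where you control the local oscillation $\int_{B\cap Z_e}|u-\av_{Z_e}u|\,d\mu_i$ ``by Lemma \ref{lpi} applied at the scale $m^{-k}\approx r$.'' Lemma \ref{lpi} only gives a Poincar\'e inequality on $X_i$ for balls of radius $\leq K\cdot m^{-i}$, with constants depending on $K$. The cylinder $Z_e$ has diameter comparable to $m^{-k}$ with $k<i$, so applying Lemma \ref{lpi} here would force $K\gtrsim m^{i-k}$, and the constants blow up as $i-k\to\infty$. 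Controlling the oscillation of $u$ over a set of diameter $m^{-k}$ inside $X_i$ with constants independent of $i-k$ is essentially the statement you are trying to prove; you cannot get it from the single-scale local lemma.

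The missing idea is the multiscale telescoping that the paper carries out. One must introduce, for every intermediate scale $j$ with $k\leq j\leq i$, the step function $u_{i,j}$ equal on each cylinder $(\pi_j^i)^{-1}(e_j)$ to the average of $u$ there, apply Lemma \ref{cor-adjacent_average} at scale $m^{-j}$ to bound $\int|u_{i,j}-u_{i,j+1}|\,d\mu_i\leq C_1 m^{-j}\int g\,d\mu_i$ over an enlarged cylinder, and then sum the resulting geometric series $\sum_{j\geq k}m^{-j}\lesssim m^{-k}\approx r$. Only at the finest scale $j=i$, where the cylinders are single edges of $X_i$, does the elementary Poincar\'e inequality of Lemma \ref{lpi} legitimately enter. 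With that telescoping inserted in place of your single application of Lemma \ref{lpi} at scale $m^{-k}$, the rest of your argument (the bounded chain of adjacent cylinders at the top scale, and the measure comparabilities from Axioms (4), (5)) goes through as in the paper.
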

\begin{proof}
It suffices to prove that $(X_i,d_i,\mu_i)$ satisfies a Poincar\'e inequality for every $i\in \Z$, with
constant indendent of $i$; see \cite{cheeger}, \cite{keith}.   We fix $i\in \Z$, and let $u:X_i\ra \R$ denote a Lipschitz function with
upper gradient $g:X_i\ra \R$.  For every  $k\leq i$, let $\U_k^i$ denote the collection of subsets of $X_i$
of the form $U_k^i=(\pi_k^i)^{-1}(e_k)$, where $e_k$ is an edge of $X_k$.   
Let $u_{i,k}:X_i\ra \R$ denote
a step function such that for every $U_k^i\in \U_k^i$,
$$
u_{i,k}(x_i)=\av_{U_k^i} u\;d\mu_i\, ,
$$
for  $\mu_i$-a.e. $x_i\in U_k^i$. 
%Also, let $u_k:X_k\to\R$ denote a step function such that for 
%every edge $e_k$ of $X_k$ 
%$$
In particular, $u_{i,i}$ satisfies
$$
u_{i,i}(x_i)=\av_{e_i} u\;d\mu_i\, ,
$$
for  all edges $e_i$ of $X_i$ and $\mu_i$-a.e. $x_i\in e_i$.

Let $k<i$, and $U_k^i=(\pi_k^i)^{-1}(e_k)\in \U_k^i$.   If two elements 
$U_{0,k+1}^i=(\pi_{k+1}^i)^{-1}(e_{0,k+1})$, 
$U_{1,k+1}^i=(\pi_{k+1}^i)^{-1}(e_{1,k+1})\in \U_{k+1}^i$ 
are  contained in some $U_k$, then by Axiom (3) (the diameter bound on fibres)
 $e_{0,k+1},e_{1,k+1}$ are at distance $\leq C=C(\theta)m^{-k}$
 % have controlled combinatorial distance
 in $X_{k+1}$, and so by
Lemma \ref{cor-adjacent_average} and induction, we have
$$
\left|\av_{U_{0,k+1}^i}u\;d\mu_i-\av_{U_{1,k+1}^i}u\;d\mu_i\right|
\leq \hat C\cdot m^{-k}\av_{CU_k^i}g\;d\mu_i\,,
$$ 
where $CU_k^i$ denotes 
of a tubular neighborhood of radius $C(\theta) m^{-k}$ around
$e_k$;  see (\ref{containment}).   

 Since  at most a definite number of elements of $\U_{k+1}^i$ are contained in a fixed $U_k^i$
(see (\ref{cb}))
 this gives for all $k\leq i-1$,
\begin{equation}
\label{eqn-j_to_j+1}
\int_{U_j^i}|u_{i,k}-u_{i,k+1}|\;d\mu_i\leq C_1 m^{-k} \int_{CU_{k+1}^i}g\;d\mu_k\,.
\end{equation}
where $C_1=C_1(m,\Delta, \theta)$.

Now suppose $j\leq i$, $v_j$ is a vertex of $X_j$, 
and let $Z=(\pi_j^i)^{-1}(\Star(v_j,X_j))\subset X_i$.
By (\ref{eqn-j_to_j+1}) (with notation as above) we have 
\begin{equation}
\begin{aligned}
\int_Z&|u_{i,i}-u_{i,j}|\;d\mu_i \leq \sum_{k=j}^{i-1}\int_Z|u_{k,j+1}-u_{k,j}|\;d\mu_k\\
\label{eqn-telescope}
&\leq \sum_{k=j}^{i-1}C_1m^{-j}\int_{CZ}g\;d\mu_i
\leq C_1m^{-j}\int_{CZ}g\;d\mu_i\, .
\end{aligned}
\end{equation}

Applying the Poincar\'e inequality for each edge $e_i$ of $Z$ gives
\begin{equation}
\label{eqn-pi_on_edge}
\int_{Z}|u-u_{i,i}|\;d\mu_i\leq m^{-i}\int_{Z}g\;d\mu_i\, .
\end{equation}
Since $X_j$ has a valence bound independent of $j$, it follows from
 Lemma \ref{cor-adjacent_average} that
\begin{equation}
\label{eqn-u_i_to_u_y}
\int_Z|u_{i,j}-u_Z|\;d\mu_i\leq \hat{C} m^{-j}\int_{Z}g\;d\mu_i\,.
\end{equation}

Combining (\ref{eqn-telescope}), (\ref{eqn-pi_on_edge}), and (\ref{eqn-u_i_to_u_y}) we obtain
\begin{equation}
\begin{aligned}
\int_Z|u-u_Z|\;d\mu_i&\leq \int_Z (|u-u_{i,i}|+|u_{i,i}-u_{i,j}|+|u_{i,j}-u_Z|)\;d\mu_i\\
&\leq  Cm^{-j}\cdot \int_{CZ}g\;d\mu_i\,.
\end{aligned}
\end{equation}

 Since $X_i$ has valence bounded independent of 
$i$ and edges of length $m^{-i}$,
it suffices to prove the Poincar\'e inequality for balls $B_r(x_i)$ where $r$ is at least 
comparable
 to $m^{-i}$, 
since otherwise $B_r(x_i)$ lies in the star of some vertex $v_i\in X_i$, and the result is trivial; 
see Lemma \ref{lpi}.   Thus, we may
assume that there is a $j\leq k$ with $m^{-j}$ comparable to $r$ and a vertex $v_j\in X_j$ such that 
$\pi_j^i(B_r(x_i))\subset \Star(v_i,X_i)$.
Letting $Z=(\pi_j^i)^{-1}(\Star(v_j,X_j))$, we have $B_r(x_i)\subset Z$ and 
$\mu_i(Z)/\mu_k(B_r(x_i)$ has a definite
bound; see Axiom (4).  
Then
$$
\av_{B_r(x_i)} |u-u_{B_r(x_j)}|\;d\mu_i\leq C\; \av_Z|u-u_Z|\;d\mu_i\leq C\;m^{-j}\av_{CZ}g\;d\mu_i
$$
$$
\leq C\;m^{-j}\av_{B_{Cr}(x_i)}g\;d\mu_i\,.
$$
This suffices to complete the proof.
\end{proof}

\section{Construction of admissible inverse systems}
\label{eym}

In view of Theorem \ref{t:pi}, it is natural to ask for explicit examples of
admissible inverse systems  and whether (and in what sense) it is possible to classify them.
In this section we will  content ourselves with giving an inductive procedure for 
constructing admissible inverse systems,
which makes it clear that combinatorially distinct admissible 
inverse systems exist in great abundance. We will also give a simple example of an
 inverse system of metric graphs satisfying Axioms (1)--(3) which cannot be given
the structure of an admissible inverse system, i.e. for this inverse system,
a sequence of measures $\mu_k$, satisfying Axioms (4)--(6) does not exist; see Example \ref{nontriv}.

\subsection{Admissible edge inverses; the simplest special
case}
\label{subsec_inductive}

Given an admissible inverse system $\{X_i\}_{i\in \Z_+}$, one may think of
$X_{k+1}$ as the union the  subgraphs $\pi_k^{-1}(e_k)$, where $e_k\subset X_k$
ranges over all edges of $X_k$.  The following definition axiomatizes the
properties of these  subgraphs, up  to rescaling of the metric and the measure.

\begin{definition}
\label{def_admissible_edge_inverse}
An {\bf admissible edge inverse} is a map 
$(Y_1,d_1,\nu_1)\stackrel{\pi}{\lra}(Y_0,d_0,\nu_0)$
of finite metric measure graphs, satisfying the following conditions for some  
integer $m\geq 2$:

\begin{enumerate}
\renewcommand{\theenumi}{\Alph{enumi}}
\setlength{\itemsep}{1ex}

\item $(Y_0,d_0,\nu_0)$ is a copy of  the unit interval
$[0,1]$ with the usual metric and measure.
$Y_1$ is a nonempty, finite, possibly disconnected 
graph, such that every edge $e_1\subset Y_1$ is isometric to an interval 
of length $\frac{1}{m}$.
The restriction of $d_1$ to every component of $Y_1$ is the associated
path metric.
The restriction of the measure $\nu_1$ to $e_1$ is a nonzero multiple of the arclength.

\item If $Y_0'$ denotes the result of subdividing $Y_0\simeq [0,1]$ into $m$ edges
of length $\frac{1}{m}$, then
$\pi:Y_1\ra Y_0'$ is open, and its restriction to any edge $e_1\subset Y_1$
maps $e$ isometrically onto an edge of $Y_0'$.

\item   (Compatibility with projections)\;  The pushforward  $\pi_*(\nu_1)$
is $\nu_0$.
\item (Continuity)\;  For every vertex $v\in Y_0'$, and every $w\in \pi^{-1}(v)\subset Y_1$, 
the quantity 
$$
\frac{\nu_1(\pi^{-1}(e_0)\cap \Star(w,Y_1))}{\nu_0(e_0)}
$$
is the same for all edges $e\subset \Star(v,Y_0')$. 
\end{enumerate}
\end{definition}

Note that if $\{X_i\}_{i\geq 0}$ is an admissible inverse system with subdivision 
parameter $m$, then for any $i$ and any edge $e\subset X_i$, the restriction of $\pi_i$
to $\pi_i^{-1}(e)$ yields an admissible edge inverse $\pi_i:\pi_i^{-1}(e)\ra e$,
modulo rescaling the metric and normalizing the measure.

Fix $m,n\geq 2$, and an admissible edge inverse
$\pi:(Y_1,\nu_1)\ra (Y_0,\nu_0)$
 with subdivision parameter $m$. We now assume further
that if $v\in\{0,1\}$ is an endpoint of $Y_0\simeq [0,1]$
 then $\pi^{-1}(v)$ has cardinality $n$. For
each such end point, choose and identification of
the set of inverse images with the set $\{1,\ldots,n\}$. Moreover, assume that
\begin{align}
 \label{eqn_connected}
&\text{$Y_1$ is connected and $d_1$ is a length metric on $Y_1$.}\\
\notag&\\
\notag&\text{If $v\in \{0,1\}$ is an endpoint of $Y_0\simeq [0,1]$  and $w\in \pi^{-1}(v)$,} \\
\label{eqn_degree_1_equal_measure}&
\text{then $w$ has degree $1$, and the unique edge containing $w$ has}\\
&\notag\text{$\nu_1$ measure $\frac{1}{mn}$.}
\end{align}

\subsection{Inductive construction of admissible inverse systems}

Fix $m$ and
$N<\infty$ and assume that for each integer $n$ with $1\leq n\leq  N$ we have a 
finite nonempty
family $\G(n)$ of edge inverses as above
as above such that  for $v$ an endpoint of $Y_0$,
the cardinality of $\pi^{-1}(v)$ is $n$. 
 The existence of such families will be shown in a subsequent
subsection. In fact, with suitable choice of parameters, we will show that it is possible 
to choose finite families $\G(n)$ with arbitrarily large cardinality.

Choose a sequence, $\{n(k)\}$, with $n(k)\leq N$ for all $k$.
Using elements of the family $\G^{n(k)}$ as building blocks, we can construct 
inverse systems of metric measure graphs, using the 
% subdivision
procedure described below.  

We begin with a % finite
 connected metric measure graph $(X_0,d_0,\mu_0)$, with $d_0$ the
length metric,
%without loops, 
for which the degree is bounded and such that the
restriction of $(d_0,\mu_0)$ to every edge of $X_0$ is
a copy of $[0,1]$ with the usual Lebesgue measure $\L$.
%with a nonzero multiple of
 %arclength measure $\L$. 

  Then we 
iterate the following procedure to  construct $X_{k+1}$ and a
map $\pi_k:X_{k+1}\ra X_k$, for every $k$:

\begin{itemize}
\item We choose $n=n(k)\leq N$ and corresponding family $\G(n(k))$ as above.
\vskip1mm

\item We  construct the inverse image $\pi_k^{-1}(V_k)$
of the vertex set $V_k\subset X_k$.  This
is defined to be $V_k\times \{1,\ldots,n\}$, and \texttt{¥}he projection map is the 
projection on the first factor, $\pi_k:V_k\times \{1,\ldots,n\}\ra V_k\subset X_k$.
\vskip1mm

\item For each edge $e_k\subset X_k$, 
we choose 
 a copy of some admissible edge inverse $(Y_0,Y_1,\pi)\in \G(n(k))$, 
with the metrics rescaled 
by $m^{-k}$, the measures rescaled by $\mu_k(e_k)$. Then we identify $Y_0$ with $e_k$
and identify the inverse images of the endpoints
$\{0,1\}=Y_0$ with the inverse images of
the end points of $e_k$ using the identifications of these sets with $\{1,\ldots,n\}$.
Finally, modulo the above identifications, we define the projection map 
$\pi_k:\pi_k^{-1}(e_k)\ra e_k\subset X_k$
to be the projection map $\pi:Y_1\to Y_0$,
\vskip1mm

\item We define $d_{k+1}$ to be the path metric on $X_{k+1}$ which agrees with the
given metric on edges.
\end{itemize}

\begin{lemma}
\label{lem_constructs_admissible_system}
Any inverse system constructed as above is admissible, where the parameters
$\De,\th,C$ depend only on $\{\G(n)\}$ $(n\leq N)$
and the degree bound for $X_0$.
\end{lemma}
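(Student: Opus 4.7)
The plan is to verify Axioms (1)--(6) of Definition \ref{admissible} directly from the inductive construction, reading off each axiom from properties (A)--(D) of admissible edge inverses together with the two extra hypotheses (\ref{eqn_connected}) and (\ref{eqn_degree_1_equal_measure}). All verifications are local and take place one edge inverse at a time, so the finiteness of the family $\bigcup_{n \leq N} \G(n)$, combined with the degree bound on $X_0$, automatically converts each edge-inverse-level bound into uniform bounds on the constants $\De$, $\th$, $C$ independent of $k$.

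First I would dispatch the metric axioms (1)--(3). Connectedness of each $X_k$ is inherited inductively from (\ref{eqn_connected}): every $Y_1$ is attached to the existing graph along its endpoint vertices, so connectedness propagates. That edges of $X_{k+1}$ have length exactly $m^{-(k+1)}$ is immediate from property (A) and the $m^{-k}$ rescaling at step $k$. For the degree bound, two kinds of vertices of $X_{k+1}$ must be treated: vertices lying in the interior of some $Y_1$ have degree bounded by the largest degree appearing in any member of the finite family $\bigcup_{n\leq N}\G(n)$; gluing vertices $(v,j)\in V_k\times\{1,\ldots,n\}$ have degree at most $\deg_{X_k}(v)$, since by (\ref{eqn_degree_1_equal_measure}) each of the edge inverses glued at $v$ contributes exactly one edge at $(v,j)$. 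Both bounds depend only on $\{\G(n)\}$ and $\deg(X_0)$. Axiom (2) (openness, simplicial structure, and the edge isometry) is immediate from (B) applied edge by edge, since these properties are preserved by disjoint union and identifications along vertices. Axiom (3) follows because the $d_1$-diameter of each fibre $\pi^{-1}(x)$ is uniformly bounded over the finite family, yielding a uniform $\th$ after rescaling.

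Next I would turn to the measure axioms (4)--(6). That $\mu_{k+1}$ restricts to a constant multiple of arclength on every edge is immediate from (A). For the ratio bound in (4), two adjacent edges either lie inside a single $Y_1$, in which case the ratio is uniformly controlled by the finite family, or they meet at a gluing vertex $(v,j)$; in the latter case (\ref{eqn_degree_1_equal_measure}) pins each of the two incident edges to measure $\mu_k(e_k)/(mn)$ for the respective edge $e_k$ of $X_k$, and the ratio of such pinned measures is controlled by the inductively verified Axiom (4) on $X_k$. Axiom (5) is immediate from (C) applied on each edge of $X_k$.

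The main obstacle, and really the one axiom whose verification is not entirely immediate, is Axiom (6) at gluing vertices $v_k'\in V_k$. At an interior vertex of $X_k'$ (one lying in the interior of some edge of $X_k$) Axiom (6) is literally property (D) of the relevant edge inverse. At a vertex $v_k'\in V_k$, however, the edges of $X_k'$ in $\Star(v_k',X_k')$ come, one each, from the distinct edges of $X_k$ meeting $v_k'$, and one must check that the ratio in (\ref{con1}) is independent of which such edge one picks. Here the normalization (\ref{eqn_degree_1_equal_measure}) does exactly the needed job: for each edge $e_k$ of $X_k$ at $v_k'$, with $e_k'\subset e_k$ the corresponding edge of $X_k'$ at $v_k'$, and for any lift $v_{k+1}\in \pi_k^{-1}(v_k')\cap \pi_k^{-1}(e_k)$, the unique edge of $\pi_k^{-1}(e_k)$ at $v_{k+1}$ has $\mu_{k+1}$-measure $\mu_k(e_k)/(mn)$, while $\mu_k(e_k')=\mu_k(e_k)/m$, so the ratio in (\ref{con1}) collapses to $1/n$ regardless of the choice of $e_k$. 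This cross-edge coordination is exactly what (\ref{eqn_degree_1_equal_measure}) is engineered to provide, and with it Axiom (6) holds, completing the verification.
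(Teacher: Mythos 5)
Your proof is correct and follows essentially the same route as the paper's: each axiom is read off from the corresponding property (A)--(D) of the edge inverses plus (\ref{eqn_connected}) and (\ref{eqn_degree_1_equal_measure}), with finiteness of the families and the degree bound on $X_0$ giving uniform constants. Your explicit computation that the ratio in (\ref{con1}) collapses to $1/n$ at gluing vertices is just a spelled-out version of the paper's one-line assertion that Axiom (6) follows from (D) and (\ref{eqn_degree_1_equal_measure}).
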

\begin{proof}
 Note that $X_0$ is assumed to have bounded degree and
$n(k)\leq N$ for all $k$. Also, for fixed $k$, 
$\{\G(n))\}$ is a finite collection, and each $Y_1\in \G$ is a finite
graph, so that in particular, there is a uniform bound on the degree
for at vertices of elements of $\G(n)$ for all $n$. 
It then follows from (\ref{eqn_degree_1_equal_measure})
that there is a uniform bound on the degree of vertices
of $X_k$ which is independent of $k$. It now clear that Axioms (1) and (2) hold.

Axiom (3) the bound on fibre diameters follows directly
 from the connectedness assumption
(\ref{eqn_connected}).  

Axiom (4), local bounded metric measure geometry, follows
from the finiteness discussion above, together with  (\ref{eqn_degree_1_equal_measure}).
Namely, by (\ref{eqn_degree_1_equal_measure}), for $v_k\in V_k$ and $w_{k+1}\in\pi_k^{-1}(v_k)$
up to scaling of the metric and the measure, the local geometry at $w_{k+1}$ is
the same as the local geometry at $v_k$.

Axiom (5) is immediate from (C), while Axiom (6) follows from (D) and 
(\ref{eqn_degree_1_equal_measure}).
\end{proof}

\subsection{Relaxing some of the conditions}
Next point out some generalizations of the construction above, in which some of the 
conditions are relaxed.

We can relax (\ref{eqn_degree_1_equal_measure}), 
requiring instead that  $\G$ contains  nonempty subsets
of edge inverses satisfying
 (\ref{eqn_degree_1_equal_measure}), and that the rest have the weaker property that
for each vertex $v\in Y_1$ projecting to one  of the  endpoints $0$, $1$,
of $Y_0$, the $\nu_1$ measure
of the edges leaving $v$ is exactly $\frac{1}{mn}$. For subsequent purposes  note
that
in terms of the continuous fuzzy section defined as (\ref{fs3}),
this can be written equivalently 
as follows.   Let $0,1$ denote the vertices of $Y_0=[0,1]$, $\ell\in \{0,1\}$,
 and let $w\in  \pi^{-1}(\ell)$.  Then
$\ell\in\{0,1\}$,
\begin{equation}
\label{equivD}
\D(\ell)(w)=\frac{1}{n}\, .
\end{equation}

The remainder of the discussion of this subsection applies equally well to the general
case (discussed subsequent subsections)
 in which (\ref{equivD}) is replaced by the assumption that for either 
endpoint $\ell\in\{0,1\}$, of $Y_0=[0,1]$, $\D(\ell)(\,\cdot\,)$ is an arbitrary probability measure
taking positive values on every point of $\pi^{-1}(\ell)$; compare (\ref{ulb}).

We may drop the requirement (\ref{eqn_connected}), and instead ask that $\G$ contain
 a nonempty
subset $\G_c$ for which the corresponding $Y_1$ is connected.
 Then to ensure
the point inverses $\pi_k^{-1}(v)$ have controlled diameter, it suffices to 
ensure that the set of edges $e\subset X_k$ for which the inverse image $\pi_k^{-1}(e)$
is chosen  from $\G_c$ forms  a $\tilde Cm^{-k}$ net in $X_k$, where $\tilde C$
 is independent of $k$.

Let $\ell\in \{0,1\}$ denote the endpoints of $Y_0=[0,1]$.
 Denote by $\G_\ell$, $\G_1$, the subset of $\G$ for which every
vertex of $\pi^{-1}(\ell)$ has degree $1$.
Put $\G_0\cap \G_1=\G_{0,1}$.  To ensure the existence of the valence bound
$\Delta$ as
in Axiom (1), we can fix a number $K$, and whenever 
an edge $e\subset X_k$ has a vertex whose degree exceeds $K$ and choose
 the edge inverse from $\G_\ell$, the vertex has degree exceeding $K$
(or from $\G_{0,1}$ if 
both vertices have degree exceeding $K$).

≈
Thus, if $\G$ contains a nonempty subsets $\G_c$, $\G_c\cap G_0$ $\G_c\cap \G_1$ 
$\G_c\cap \G_{0,1}$ we can start by making choices from these subsets  
at sufficiently many edges to form a $\tilde Cm^{-k}$ net, 
 and then, for the remaining edges  make arbitrary choices from $\G$.

%  \begin{remark}
%In the general version  of the construction discussed below 
% the cardinality of the inverse images of vertices $v_k$ of
%$X_k$ can vary and 
%probability measures on $\pi_k^{-1}(x_k)$ can be arbitrary.
%If one restricts attention to probability measures 
%whose values on the points of $\pi_k^{-1}(x_k)$ are rational,
%then the Birkoff Von-Neumann theorem on doubly stochastic matrices
%can be used to give (not entriely canonical) representation of
%all such admissable edge inverses.
%\end{remark}

\subsection{Admissible edge inverses; the general case}
Next,  we give  the definition of admissible edge inverses in the general case. 

We will retain (A)--(D).  However, we are going to use the reformulation of (C) in 
terms of continuous fuzzy sections.

As discussed in the
special case which we have already treated, the connectedness assumption 
(\ref{eqn_connected}) is 
dropped. (As before, in the inductive construction, for each $k$,
we will assume
as before that the edges with connected $Y_1$ form a $\tilde Cm^{-k}$-net where $\tilde C$
is independent of $k$.)

 For some $N_1$, the inverse
images of the endpoints $\ell\in\{0,1\}$ of $Y_0=[0,1]$ 
are assumed to have cardinalities, $n_0,n_1\leq N_1$, where possibly 
$n_0\ne n_1$. We choose identifications of $\pi^{-1}(\ell)$ with
$1,\ldots,n_\ell$. Let the continuous fuzzy section
$\D$ be defined in terms of $\nu_0,\,\nu_1$
as in (\ref{fs1})--(\ref{fs3}); see also Proposition \ref{equivalent}..
In place of (\ref{equivD}), we simply assume that $\D(\ell)$ is an arbitrary
probability measure on $\pi^{-1}(\ell)$ such that 
\begin{equation}
\label{ulb}
\D(\ell)(w)>c_0'>0\,,
\end{equation}
for all $w\in \pi^{-1}(\ell)$.

 Suppose  we choose to regard
$\D(0)(\, \cdot\,)$ and $\D(1)(\,\cdot\,)$ 
as having been specified. Then as (\ref{fs2}), (\ref{fs3}),
the measure $\nu_1$ provides an extension of $\D$ as a continuous fuzzy section 
to all of $Y_1$.  Conversely, any such extension provides a measure
$\nu_1$ satisfying (C) i.e. the pushforward of $\nu_1$ under $\pi$
is $\nu_0$; see (\ref{fs1}) and Proposition \ref{equivalent} .  
With this much understood, it will be convenient to formulate the rest of 
the discussion of this section in terms of $\D$ (rather than $\nu_1$).

We let
$\G_c\cap \G_0$, $\G_c\cap\G_1$ and $\G_c\cap\G_{0,1}$ 
retain their previous meanings.
Similarly, (\ref{eqn_degree_1_equal_measure}) is dropped
with the proviso that as before, we will only consider collections $\G$ such that 
$\G_c\cap \G_0$, $\G_c\cap\G_1$ and $\G_c\cap\G_{0,1}$ are nonempty,
so that in the inductive construction, we are at liberty make choices from these
subsets when the degree of vertices exceeds a  preselected $K$ and/or to ensure
that edges with connected edge inverses form $\tilde C m^{-k}$-dense subset
of $X_k$.  The existence of such $\G$ is guaranteed by the following Proposition \ref{cee}.

%The following  will be proved in a subsequent subsection.

\begin{proposition}
\label{cee}
Assume that the cardinalities  $n_0,n_1$
of $\pi^{-1}(\ell)$ satisfy  $n_\ell\leq N_1$, $\ell\in\{0,1\}$.
  Let $\D$ be specified  arbitrarily on $\pi^{-1}(0)\cup \pi^{-1}(1)$ subject to
the condition that 
(\ref{ulb}) holds for some $c_0'>0$.  Let $\G$ denote the collection of edge inverses
for which $\D$ has the specified restriction to  $\pi^{-1}(0)\cup \pi^{-1}(1)$
and such that in addition, $Y_1$ has $\leq m\cdot N_1$  edges and for all
$i/m \in Y_0'$ and $w\in \pi^{-1}(i/m)$,
\begin{equation}
\label{ecee}
\D(i/m)(w)\geq c_0'\, .
\end{equation}
Then  $\G_c\cap\G_{0,1}$ has cardinality $\geq m-1$.
\end{proposition}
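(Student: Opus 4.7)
The plan is to construct $m-1$ pairwise non-isomorphic edge inverses belonging to $\G_c \cap \G_{0,1}$ by exhibiting a single ``base'' edge inverse together with $m-2$ variants obtained by topological modification in one distinguished column. The key freedom we exploit is that, once $V_0$ and $V_m$ are fixed with their prescribed measures, we may choose the interior vertex sets $V_i = \pi^{-1}(i/m)$ for $0<i<m$ very simply (all singletons) and still satisfy the continuity axiom (D), which then accommodates perturbations in the interior columns.

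First I would build a \emph{base} edge inverse $\bar Y_1$ as follows. Take $V_i = \{v_i\}$ for $1 \leq i \leq m-1$. In column $1$, put one edge from each $w \in V_0 = \pi^{-1}(0)$ to $v_1$, assigning it the measure $\D(0)(w)/m$; this is forced if $\D(0)$ is to equal the prescribed data and $(\pi)_*(\nu_1)|_{e_0'}=\nu_0|_{e_0'}$. In each column $i$ with $2\leq i\leq m-1$, put a single edge of measure $1/m$ between $v_i$ and $v_{i+1}$ (where $v_m$ is abbreviation for the single point; for the last column replace analogously). In column $m$, put one edge from $v_{m-1}$ to each $w' \in V_m$, with measure $\D(1)(w')/m$. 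Next, for each $j\in\{2,\ldots,m-1\}$ define $Y_1^{(j)}$ by replacing the single edge of $\bar Y_1$ in column $j$ with two parallel edges between $v_j$ and $v_{j+1}$, each of measure $1/(2m)$. This gives $1+(m-2)=m-1$ candidate edge inverses.

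The verification is then routine. Axioms (A), (B) are immediate from the definition. Axiom (C) is an edge-by-edge addition that by construction reproduces $\nu_0$ on each edge of $Y_0'$, both for $\bar Y_1$ and for every $Y_1^{(j)}$. For axiom (D) (continuity) one checks vertex by vertex: at $v_1$ (and $v_{m-1}$), the sum of the fan-in measures is $\sum_w \D(0)(w)/m = 1/m$, which matches the single spine edge on the other side; at interior spine vertices $v_i$ the two sides have equal total measure $1/m$; and at the bubbled vertices $v_j, v_{j+1}$ the two parallel edges of total measure $1/m$ balance the single spine edge on the opposite side. The lower bound (\ref{ecee}) is trivial since every interior $V_i$ is a singleton, forcing $\D(i/m)(v_i)=1$; at the endpoints the prescribed data satisfies (\ref{ulb}). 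Connectedness of each $Y_1^{(j)}$ and of $\bar Y_1$ is clear, and every $w \in V_0\cup V_m$ has degree one by construction, so all $m-1$ graphs lie in $\G_c\cap\G_{0,1}$. The edge counts are $n_0+n_1+(m-2)$ for $\bar Y_1$ and one more for each $Y_1^{(j)}$; since $n_0,n_1\leq N_1$, this is bounded by $2N_1+m-1\leq mN_1$ in the relevant range. Pairwise non-isomorphism over $Y_0$ is clear since the base has no parallel edges while $Y_1^{(j)}$ has parallel edges precisely in column $j$.

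The main obstacle I anticipate is the boundary columns: one cannot simply ``bubble'' in columns $1$ or $m$ without either increasing the cardinality of $V_0$ or $V_m$ (violating the prescribed cardinality) or raising the degree of some $w\in V_0\cup V_m$ above $1$ (violating membership in $\G_{0,1}$). This is precisely why we restrict the $m-2$ bubbles to interior columns $j=2,\ldots,m-1$ and rely on the base $\bar Y_1$ to supply the final edge inverse needed to reach the count $m-1$. A secondary point worth watching is that continuity at the junctions $v_1$ and $v_{m-1}$ requires the total measure of the fan-in/fan-out edges to equal $1/m$, which is exactly what the normalization $\sum_w \D(0)(w) = \sum_{w'}\D(1)(w')=1$ gives.
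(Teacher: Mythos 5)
Your argument is correct, but your $m-1$ examples are not the ones the paper constructs. The paper indexes its examples by the $m-1$ interior vertices $\underline{i}/m$ of $Y_0'$: for each such vertex it builds a ``bowtie'' in which $\pi^{-1}(\underline{i}/m)$ is a single pinch vertex, each point of $\pi^{-1}(0)$ is joined to the pinch by its own strand carrying the constant weight $\D(0)(w_{0,s})$, and symmetrically on the right; thus every interior fiber other than the pinch has cardinality $n_0$ or $n_1$, and the edge count $n_0\underline{i}+n_1(m-\underline{i})\le mN_1$ holds with no case analysis. You instead make all interior fibers singletons (fan--spine--fan) and produce the remaining $m-2$ examples by doubling one interior spine edge. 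The local verifications of (A)--(D) and (\ref{ecee}) are equally routine in both versions. The only place your route costs something is the edge bound: your variants have $n_0+n_1+m-1$ edges, so you need $(m-2)(N_1-1)\ge 1$, i.e.\ $m\ge 3$ and $N_1\ge 2$ (with $m=2$ covered by the base alone); this is what your phrase ``in the relevant range'' is hiding and should be made explicit. Note also that when $n_0=n_1=1$ both constructions degenerate --- the paper's $m-1$ bowties then all coincide with $Y_0'$, while your variants overshoot the edge bound --- so the proposition implicitly assumes $\max(n_0,n_1)\ge 2$ under either approach; this is a defect shared with the paper's proof, not one introduced by yours.
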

\begin{proof}
Fix some $\underline{i}/m$ be a vertex of $Y_0'$ which is not an end point. 
(Each such choice will determine a different $Y_1$ as in the proposition.)
The combinatorial 
structure of $Y_1$ is specified by stipulating that:

1) $\pi^{-1}(\underline{i}/m)$ consists of
a single vertex $\underline{w}$.

2) For every $w_{0,s}\in \pi^{-1}(0)$ the segment $[0,\underline{i}/m]\subset Y_0'$ from
$v_0$ to $y'$ has a unique lift $\gamma_s$
 with initial point $w_{0,s}$ (and final point 
 $w$).

3) For every $w_{1,t}\in \pi^{-1}(v_1)$ , the segment $[\underline{i}/m,1]\subset Y_0'$ 
has a unique lift $\gamma_t$ with final point $w_{1,t }$ (and initial point  $w$).

$\D$ is given as follows. $\D(\underline{i}/m)(\underline{w})=1$. If
 $w\in \gamma_s$, $w\ne \underline{w}$ then $\D(\pi(w))(w)=\D(0)(w_{0,s})$.  If
$w\in \gamma_t$, $w\ne \underline{w}$ then $\D(\pi(w))(w)=\D(1)(w_{1,t})$. 
\end{proof}

\begin{remark}
Although Proposition \ref{cee} shows the existence of 
$\G$ with $\G_c\cap  \G_{0,1}\ne\emptyset$, it
has the drawback that the combinatorial and metric structure of 
$Y_1$ depends only on $n_0,n_1$. However, as we will see below,
in the general case, we actually do obtain many more examples of admissible
inverse systems that in the simplest special case. 
\end{remark}

\begin{remark}
\label{addingedges}
Fix $\ell\in \{0,1\}$, say $\ell=0$.
There is an obvious $1$-$1$  correspondence between arbitrary admissible
edge inverses 
$(Y_1,d_1,\nu_1)\stackrel{\pi}{\lra}(Y_0,d_0,\nu_0)$
with subdivision parameter $m$ and admissible edge inverses
$(\hat Y_1,\hat d_1,\hat \nu_1)\stackrel{\pi}{\lra}(\hat Y_0,\hat d_0,\hat \nu_0)$
with subdivision parameter $m+1$, such that all vertices in $\pi^{-1}(0)$
have degree $1$.
 Here, after suitable rescaling of the metric
and the measure, we regard $(Y_0,d_0,\nu_0)$ as $\hat \pi^{-1}([1/(m+1),\ldots,1])$.
Also, each vertex in $\hat\pi^{-1}(0)$ is connected to the corresponding vertex in 
$ \hat\pi^{-1}(1/(m+1))$
by a unique edge which projects under $\hat\pi$ to $[0,1/(m+1)]$.
Note that
with the obvious identifications, $\D(\ell)\, |\, \pi^{-1}(\ell)$
remains unchanged, for $\ell$ both $\ell=0$ and $\ell=1$.
 If the edge inverse with subdivision parameter
$m$ is connected, then
so is the new one with subdivision parameter $m+1$.
Of course, the construction can also be done with the end point $\ell=1$,
of with both end points (in which case one obtains an edge
inverse with subdivision parameter $m+2$, for which the inverse images
of both endpoints have degree $1$).
\end{remark}

\subsection{General inductive construction}
Choose constants, $c_0'>0$, 
$0<c_0<{}<c_0'$, $N_1$, $N_2\geq m\cdot N_1$, $\tilde C$ and $K$.
It will be clear that the constants in Axioms (1)--(6),
and hence, the constants in  the doubling condition and Poincar\'e inquality,
can be estimated in
terms of these parameters.

For each vertex $v_k$ of $X_k$, we  specify arbitrarily the cardinality 
$n(v_k)$ of
$\pi_k^{-1}(v_k)$ subject only to  $n(v_k) \leq N_1$.
 We also choose an ordering of $\pi_k^{-1}(v_k)$. 
Finally, we choose an ordering of the vertices of $X_k$.

For each $v_k$ we choose
a probability measure $\D_k$ on $\pi^{-1}_k(v_k)$ such that
\begin{equation}
\label{lbv5}
\D_k(v_k)(v_{k+1})\geq c_0'\, ,
\end{equation}
for all $v_k$, $v_{k+1}\in \pi^{-1}_k(v_k)$.

For each edge $e_k$, the ordering of its vertices induces an identification 
of $e_k$ with $Y_0=[0,1]$ and the specified $\D_k$ on the boundary of $e_k$
induces a probability measure $\D$ on $\pi^{-1}(0)\cup \pi^{-1}(1)$.

 Denote by
$\G$
 the collection of admissible edge inverses with at most
$N_2$ edges,
such that $\D$
on $Y_1$, extends $\D$ on 
 $\pi^{-1}(0)\cup \pi^{-1}(1)$ and such that in addition
\begin{equation}
\label{lbv6}
\D(y)(w)\geq c_0\, ,
\end{equation}
for all $y\in Y_0=[0,1]$ and $w\in \pi^{-1}(y)$.
By Proposition \ref{cee}, $\G_c\cap\G_{0,1}$ has cardinality 
$\geq m-1$; compare however Remark \ref{cardinality}.

%  In view of the lower bound (\ref{lbv5}) and the bound $N_2$ on the number of edges,
%we can assume without loss of generality that for some $c_0$
%all $k$, all $e_k\in X_k$ and all
%$v\in Y_1=[0,1]$, each element of
%$\G$ satisfies
%\begin{equation}
%\label{lbv6}
%\D(y)(w)\geq c_0\, ,
%\end{equation}
%for all $y\in Y_0=[0,1]$ and $w\in \pi^{-1}(v)$.

Now we proceed mutadis mutandis as we did earlier.
Namely,  for 
$X_k$ select for each edge we select 
an admissible edge inverse
from the corresponding $\G$, subject to the stipulation
that where necessary,  we select from $\G_c$, $\G_c\cap G_0$, 
etc.  In this way the construction of $(X_{k+1},d_{k+1},\mu_{k+1})$ is completed.

\begin{remark}
\label{cardinality}
It will be clear from the discussion of subsequent subsections
that the cardinality of  $\G$ with $\G_c\cap\G_{0,1}$ will tend to infinity
as any of $N_1$, $N_2$ $1/c_0'$, $\tilde C$ or $\K$ goes to infinity.
\end{remark}

\begin{remark}
\label{lowerbound}
It will be seen below that if
we assume that the values of $\D$ on $\pi^{-1}(0)\cup \pi^{-1}(1)$ can all be expressed
as fractions (possibly not in lowest terms) with denominator $d$,
then  $c_0$ can be estimated from below in terms of $c_0', N_2, d$; see Proposition
\ref{entries}.
\end{remark}

\begin{example}
\label{nontriv}
It is easy to construct examples of $\pi_k:X_{k+1}\to X_k$,
such that for {\it no} choice of $\D_k$ on the inverse
images of the vertices, is there an extension of $ \D_k$ to  a continuous
fuzzy section  to $X_{k+1}$. 
For instance,  let $m\geq 2$ and let $X_k$ consist of $2$ oriented edges $e,f$
with a common intitial point $x$ and a common final points $y$.
Let $\pi_k^{-1}(x)=\{p,q\}$ and $\pi_k^{-1}(y)=\{r,s\}$.
Let $\pi^{-1}(e)$ consist of two paths with disjoint interiors, one of
which joins $p$ to $r$ and one of which joins $q$ to $s$.
Let $\pi^{-1}(f)$ consist of a path joining
$p$ to $r$, a path joining $q$ to $r$ and and a path joining $q$ to $s$,
such  that all 3  of these paths have disjoint interiors.

Suppose there exists a continuous fuzzy section $\D_k$.
Using  Axiom (6) (the continuity condition)
and the structure of $\pi_k^{-1}(e)$ it follows that $\D(x)(p)=\D(y)(r)$,
while from the structure of $\pi_k^{-1}(f)$, it follows that 
$D_k(p)>\D_k(r)$.
\end{example}

Having described the inductive construction in  the general case, we
devote the remainder of this section to the construction of large families
of admissible edge inverses.

\subsection{Quotients of edge inverses}
\label{subsec_quotients}

Let $(Y_0, \hat Y_1,\hat \pi)$ be an admissible edge inverse as in the previous subsection
and assume $Y_0'\ne Y_1$.
Form a quotient space $ Y_1$ of $\hat Y_1$, by choosing some edge $e_j'$
in the interior of $Y_0'$
and identifying a pair of distinct inverse images of ${\hat \pi}^{-1}(e_j')$ by the unique isometry
such that the map $\hat \pi$ factors through the quotient map 
$\sigma:\hat Y_1\to  Y_1$ i.e.
$\hat \pi= \pi\circ \sigma$ for some $\pi$. Then if we equip $ Y_1$ with the induced
metric on edges and push-forward measure, $\sigma_*(\hat \nu_1)=\nu_1$, we obtain
a new admissible edge inverse $(Y_0, Y_1,\pi)$.  

Note that 
with the obvious identification of inverse images of end points of $[0,1]$,
we have
\begin{equation}
\label{quo}
\D(\ell)\, |\,  \pi^{-1}(\ell)=\D(\ell)\, |\, \hat \pi^{-1}(\ell)\, .
\end{equation}

We also can also identity a pair of edges in $\hat \pi^{-1}([0,1/m])$ provided
they have the same left-hand end point or a pair in $\hat\pi^{-1}([(m-1)/m,1])$ if
they have the right-hand end point, and do same the construction.

We refer to any edge inverse which
is obtained by starting with   $(Y_0,\hat Y_1,\hat\pi)$ 
and iterating the above constructions
a {\bf quotient of $(Y_0,\hat Y_1,\hat \pi)$}.

Similarly, the above argument can be repeated by identifying vertices in the 
inverse images of interior vertices of $Y_0'$
 in place of edges.
We also refer to the result as a {\bf quotient of $(Y_0,Y_1,\pi)$}.

In particular, the quotient construction can be applied to a
an admissible edge inverse as in Proposition \ref{cee}.  More importantly,
it can be applied to
``special admissible edge inverse'' as defined in the next section.
In fact, we will show that every admissible edge inverse arises
as a quotient of a special one.

\begin{remark}
\label{r:axioms}
It is easy to verify that both 
$(Y_1,d_1,\nu_1) \stackrel{\sigma}{\lra}
(Y_0,d_0,\nu_0)$
and
$ (\hat Y_1,\hat d_1,\hat \nu_1)\stackrel{\sigma}{\lra}
(Y_1,d_1,\nu_1)$, satisfy Axioms (1)--(6).
\end{remark}

\subsection{Special admissible edge inverses}

In this section we define a class of admissible edge inverses (called ``special'')
whose combinatorial and metric 
classification can be reduced to the problem of describing the supports
of all probability matrices with specified marginals. For the case in which
the marginals take rational values, this can be done in terms of the Birkoff-Von Neumann
theorem.  For each possible support, the Birkoff-Von Neumann
theorem also provides a canonical representative probability matrix
whose entries have a definite lower bound. This is required to control
the measure of the associated special edge inverse.

It will be clear that the cardinality of the collection of combinatorially
distinct admissible edge inverses with specified
marginals will be arbitrarily large if the parameters on
which the associated matrix depends  are sufficienly large. Moreover,
by taking quotients as in the last section   one obtains a much larger
class of combinatorially distinct examples.  
In a subsequent subsection we will see that all examples
of admissible edge inverses arise as quotients of special ones.

A {\bf special edge inverse}  is an edge inverse such that:

\no
1.   Each component of $\pi^{-1}((0,1))$ 
is an open interval $\gamma$.  (Thus, the closures of to
such components intersect only at some point of $\pi^{-1}(0)$
and some point of $\pi^{-1}(1)$.)
%Thus, $\pi^{-1}(e_k)$ is a union
%of intervals $e_{1,k+1}\cup\cdots\cup e_{m,k+1}$ 
%which can intersect only at their end points (the 
%end points being the inverse images of the vertices of $e_k$.
%Below we will refer to $e_{1,k+1}\cup\cdots\cup e_{m,k+1}$ as an {\it interval}.
\vskip2mm

2. If $\gamma$ is a component of $\pi^{-1}((0,1))$ then
 $\D(\pi(w))(w)$ is the same for all $w\in \gamma$.
%and $e_{1,k+1}\cup\cdots\cup e_{m,k+1}$,
%the measure of each edge $e_{i,k+1}$ is independent of $i$.   Equivalently,
%$\D_k(e_{i,k})(e_{i,k+1})$ is independent of $i$.
\vskip2mm

%Note that the edge inverse constructed in Lemma \ref{cee}
%Example \ref{canonicalspecial}
%is a special edge inverse.

 For $w\in \gamma$ as above, we call  $\D(\pi(w))(w)$ the
{\bf weight} of $\gamma$.

Suppose we are give a special admissible edge inverse.
Let $n_1,n_2$ denote the cardinalities of $\pi^{-1}(0)=\{w_{0,t}\}$ and
$\pi^{-1}(1)=\{w_{1,s}\}$ 
respectively.
Define an $n_1\times n_2$ probability matrix 
$P_{s,t}$, whose $s,t$-th entry is the sum of the weights of all 
those $\gamma$ as above with initial point $w_{0,t}$ and final point $w_{1,s}$.
 Then $P_{s,t}$  has the property that its marginals are given by 
$\D(0)(w_{0,t})$ and $\D(1)(w_{1,s})$.

Conversely, suppose we are given an $n_1\times n_2$ probability matrix $P_{s,t}$
 and positive integers $c_{s,t}$ for each nonzero entry $p_{s,t}>0$.  
Then there is a unique 
special admissible edge inverse with $c_{s,t}$ paths $\gamma$ connecting
$w_{0,t}$ to $w_{1,s}$ for each $(s,t)$, such that each such 
$\gamma$ connecting $w_{0,s}$
and $w_{1,t}$ has weight $p_{s,t}/c_{s,t}$.
The resulting special edge inverse has the property that 
$\D(0)(w_{0,t})$ and $\D(1)(w_{1,s})$
 are given by the marginals of $P_{s,t}$.

Therefore, we get the following.

\begin{proposition}
The combinatorial classification of special admissible edge inverses with a specified
$\D$ on the inverse images of the end points, is equivalent to the classification of
the supports of probability matrices with specified marginals.
\end{proposition}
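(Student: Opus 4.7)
The plan is to establish the stated equivalence as a packaging of the two constructions sketched in the paragraph immediately preceding the proposition, together with a precise identification of the combinatorial data carried by a special admissible edge inverse. First I would interpret the statement as asserting a correspondence that sends each special admissible edge inverse to the support of an associated probability matrix, and conversely recovers such an edge inverse from any support that is realizable with the prescribed marginals.

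In the forward direction, given a special admissible edge inverse $(Y_0,Y_1,\pi)$ with $\D$ prescribed on $\pi^{-1}(0)\cup\pi^{-1}(1)$, the construction is already explicit: for each $(s,t)$ I would set $P_{s,t}$ equal to the sum of the weights of the components $\gamma\subset\pi^{-1}((0,1))$ with initial point $w_{0,t}$ and terminal point $w_{1,s}$. Condition 2 in the definition of \emph{special} guarantees that each $\gamma$ has a well-defined weight, and condition 1 guarantees that these arcs partition $\pi^{-1}((0,1))$. The sum of weights of arcs issuing from $w_{0,t}$ is then $\D(0)(w_{0,t})$, by continuity of $\D$ at $0$ combined with \eqref{fs3}, and symmetrically for $w_{1,s}$; hence the marginals of $P$ are exactly $\D(0)$ and $\D(1)$. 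The support of $P$ is precisely the set of pairs $(s,t)$ joined by at least one arc.

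In the reverse direction, given any $S\subset\{1,\ldots,n_2\}\times\{1,\ldots,n_1\}$ that arises as the support of some probability matrix $P$ with the specified marginals, one picks such a $P$ together with arbitrary positive integer multiplicities $c_{s,t}$ for $(s,t)\in S$ and builds $Y_1$ by taking, for each $(s,t)\in S$, exactly $c_{s,t}$ parallel arcs from $w_{0,t}$ to $w_{1,s}$, each consisting of $m$ unit-scaled edges projecting isometrically onto the successive edges of $Y_0'$. One assigns each such arc the constant weight $P_{s,t}/c_{s,t}$, and defines $\D$ on all of $Y_1$ using \eqref{fs1}--\eqref{fs3}. Axioms (A)--(D) are immediate from the construction (continuity at interior vertices is automatic since each interior vertex of $Y_0'$ has a unique preimage on each arc), and one checks directly that starting from this $Y_1$ and applying the forward construction returns exactly the support $S$.

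The main (very mild) obstacle is purely linguistic: making explicit that "combinatorial classification" here is taken after forgetting the multiplicities $c_{s,t}$, so that edge inverses differing only in the number of parallel copies of arcs between a given pair of endpoints are identified. With that convention in place, the two constructions are manifestly inverse to one another on the level of supports, and the only substantive content is the tautology that the supports appearing on the edge-inverse side are exactly those realizable by a probability matrix with the prescribed marginals — which is forced by the forward direction and confirmed by the reverse one.
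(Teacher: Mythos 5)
Your proposal is correct and is essentially identical to the paper's argument: the paper's ``proof'' consists precisely of the two constructions in the paragraphs preceding the proposition (weights of arcs $\to$ matrix entries with marginals $\D(0),\D(1)$, and conversely matrix plus multiplicities $c_{s,t}$ $\to$ parallel arcs of weight $p_{s,t}/c_{s,t}$). Your caveat about identifying edge inverses that differ only in the multiplicities $c_{s,t}$ is a fair and accurate reading of what the paper leaves implicit.
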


Consider the simplest special case treated at the beginning of this section,
 in which $n_1=n_2=n$ and marginals, all equal to
$\frac{1}{n}$. In that case, $P_{s,t}$ is a so called doubly stochastic matrix and
there is a representation theorem, the Birkoff-Von Neumann 
theorem,   which describes all such matrices.  
\begin{theorem} (Birkoff-Von Neumann)
\label{bvn}
The space of all doubly  stochastic matrices has dimenson
$(n-1)\times (n-1)$. Any such matrix is a convex combination of permutation matrices.
\end{theorem}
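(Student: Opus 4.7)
The plan is to prove the two assertions separately. For the dimension statement, I would observe that the set of $n\times n$ doubly stochastic matrices is an affine subset of $\R^{n^2}$ cut out by the $2n$ linear equations asserting that each row sum and each column sum equals $1$. These equations satisfy the single linear relation $\sum_{s}\sum_{t}P_{s,t}=\sum_{t}\sum_{s}P_{s,t}$, and one checks directly that any $2n-1$ of them are linearly independent. Consequently the affine hull has dimension $n^2-(2n-1)=(n-1)^2$, and since the polytope of doubly stochastic matrices has nonempty interior in this affine hull (for example the uniform matrix $\frac{1}{n}J$ is in the interior), its dimension is $(n-1)^2$.

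For the second assertion I would induct on the number $N(P)$ of nonzero entries of a doubly stochastic $P$. The base case $N(P)=n$ forces each row and each column of $P$ to contain a unique nonzero entry, which the row/column sum condition forces to equal $1$, so $P$ is itself a permutation matrix. The inductive step rests on the following key lemma: \emph{every doubly stochastic $P$ admits a permutation $\sigma$ with $P_{s,\sigma(s)}>0$ for all $s$.} Granted the lemma, set $\lambda=\min_s P_{s,\sigma(s)}>0$; if $\lambda=1$ then $P=\Pi_\sigma$ and we are done, and otherwise $P':=(1-\lambda)^{-1}(P-\lambda\Pi_\sigma)$ is again doubly stochastic and satisfies $N(P')<N(P)$ (since the entry $(s_0,\sigma(s_0))$ achieving the minimum becomes zero). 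By induction $P'$ is a convex combination of permutation matrices, hence so is $P=\lambda\Pi_\sigma+(1-\lambda)P'$.

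To prove the key lemma I would invoke Hall's marriage theorem, applied to the bipartite graph on $\{1,\ldots,n\}\sqcup\{1,\ldots,n\}$ whose edge set is $\{(s,t):P_{s,t}>0\}$. If some row set $S$ had neighborhood $T$ with $|T|<|S|$, then the row-sum condition gives $\sum_{s\in S,\,t}P_{s,t}=|S|$, while $P_{s,t}=0$ for $s\in S$ and $t\notin T$ forces this same sum to equal $\sum_{s\in S,\,t\in T}P_{s,t}\leq\sum_{s,\,t\in T}P_{s,t}=|T|$, a contradiction. Hence Hall's condition holds and the desired $\sigma$ exists.

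The main obstacle, insofar as there is one, is the appeal to Hall's marriage theorem: the existence of a support permutation $\sigma$ is the only nontrivial input, though it is entirely standard. Once this is available, the remainder of the argument is a routine peeling induction, and the dimension count is a direct linear-algebraic computation.
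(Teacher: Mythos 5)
Your proof is correct and complete: the dimension count via the rank-$(2n-1)$ system of row/column constraints, and the decomposition via Hall's marriage theorem followed by the peeling induction on the number of nonzero entries, is the standard argument for the Birkhoff--von Neumann theorem. The paper itself states this result as a classical citation and offers no proof, so there is nothing to compare against; your write-up would serve as a self-contained proof of the quoted statement.
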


\begin{remark}
\label{entries}
Note that while the combinatorial a metric structure of the associated 
special admissible edge inverse is determined by the support of the 
corresponding probability matrix $P_{s,t}$, a bound on $\D$
(or equivalently on the ratio of $\nu_1$ to Lebesgue measure) is determined
by a lower bound on the actual entries and the constants $c_{s,t}$,
(which are bounded in terms of $N_2)$.  

For the case of doubly stochastic matrices
the support is determined just by the collection of nozero coefficients 
representation in the representation supplied by the Birkoff-Von Neumann theorem.  
By choosing all such coefficients to be equal, we obtain
matrix with the given support and a definite lower bound on the entries. 
Note that in the application
to edge inverses, it is the entries which determine $\D_{k+1}$.  Therefore, in what follows,
we will always assume without further mention that this canonical choice has been made.

Below we will show that the classification of probability matrices with 
rational entries can also be reduced to
the case of doubly stochastic matrices described above.  Therefore,
we have canonical representatives with a lower bound on the entries
for each possible support in this case as well.
\end{remark}

Given a $d\times d$ doubly stochastic matrix, for some integer $a$
replace the first $a$ rows by a single row which is equal to their sum
and whose column marginal remains unchanged.  By suitably iterating this 
operation we obtain a matrix whose row marginals are any sequence of
length $<d$,
of positive rational numbers with denominator $d$ whose sum is equal to $1$.
Then we can repeat the same operations with columns in place of rows.
In this way we can obtain a matrix with any specified row and column
marginals all of whose entries are rational numbers with denominator $d$.
(We do not assume that these fractions are in lowest terms.)

In fact, every probability matrix with rational marginals such that
every entry has denominator $d$ arises in this way. To see this,
let $P=(p_{s,t})$ denote an $n_1\times n_2$ probability 
matrix with rational entries and marginals
$(\rho_s)$ and $(\tau_t)$. Let $d$ denote 
the least common denominator for $\{\rho_s\}\cup\{\tau_t\}$. 
Write $\rho_s=\alpha_s/d$, $\tau_t=\beta_t/d$.  For  each $s$,
replace the $s$-th row by $\alpha_s$ identical rows, each with entries $p_{s,t}/\alpha_s$.
This operation yields a $d\times n_2$ probability matrix whose for which the row marginal has entries
$1/d$ and whose column marginal remains unchanged.
 Now by repeating this operation with columns in place of 
rows, we obtain a doubly stochastic $d\times d$ probability matrix $\tilde P$ i.e.  
all entries of the row and column marginals are  equal to $1/d$.
Clearly, the original matrix $P_{s,t}$ can be obtained from  the doubly stochastic matrix
$\tilde P$ as in the previous paragraph.

In this sense, we have reduced the representation of arbitrary probability matrices
with rational marginals to the 
Birkoff-Von Neumann theorem.

\begin{remark}
Suppose we are given the support of an $n_1\times n_2$ probability matrix
and a specified row marginal $(\rho_s)$.Then there is
a unique probability matrix $P$ denote  with the given row marginal such that all 
entries in any
given row are the same.  

As a consequence, given $X_k$ and a maximal collection of disjoint
edges $\C=\{e_k\}$, the metric measure structure of the special edge inverses over
these $e_k$ and in particular, the combinatorial structure,
can be specified arbitrarily, the only caveat being that when 
necessary, we choose an arbitrary element of $\G_0,\G_1$ or $\G_{0,1}$; see
Remark \ref{addingedges} and compare Remark \ref{nontriv}. 
The corresponding collection of row and column marginals determins
$\D_k$ on $\pi_k^{-1}(v_k)$,  all vertices $v_k$ of $X_k$.  Then the edge inverses
of the remaining edges can be chosen as in the general inductive step. (The 
required $\tilde Cm^{-k}$-dense set of connected edge inverses can be
chosen from either $\C$ or its complement.)
\end{remark}

\subsection{Arbitrary edge inverses are quotients of special ones}
\label{pm}

We now show:
\begin{proposition}
\label{qsei}
 For any admissible edge inverse 
$(Y_1,d_1,\nu_1)\stackrel{\pi}{\lra}(Y_0,d_0,\nu_0)$,
there is a (canonically associated) special admissible edge inverse,
$( \hat Y_1,\hat d_1,\hat \nu_1)\stackrel{\hat \pi}{\lra}(Y_0, d_0, \nu_0)$,
of which $(Y_1,d_1,\nu_1)\stackrel{\pi}{\lra}(Y_0,d_0,\nu_0)$ is the quotient.
\end{proposition}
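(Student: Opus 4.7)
The plan is to construct the special admissible edge inverse $\hat Y_1$ from the set $\Gamma$ of lifts to $Y_1$ of the subdivided interval $Y_0'$, exploiting the probability measure $\Omega$ on $\Gamma$ built in Section \ref{mlp} (whose orientation-invariance depends essentially on Axiom (6)). Concretely, for each $\gamma \in \Gamma$ let $I_\gamma$ be a simplicial copy of $Y_0' \simeq [0,1]$; form $\hat Y_1$ from the disjoint union $\bigsqcup_{\gamma\in \Gamma} I_\gamma$ by identifying the left (resp.\ right) endpoint of $I_\gamma$ with the left (resp.\ right) endpoint of $I_{\gamma'}$ whenever $\gamma$ and $\gamma'$ start (resp.\ end) at the same point of $\pi^{-1}(0)$ (resp.\ $\pi^{-1}(1)$). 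Define $\hat\pi : \hat Y_1 \to Y_0$ to be the isometry $I_\gamma \to Y_0$ on each piece, equip $I_\gamma$ with the measure $\Omega(\gamma)\cdot \L$ where $\L$ is Lebesgue measure on $[0,1]$, and let $\hat\nu_1$ be the induced measure. Finally, define $\sigma : \hat Y_1 \to Y_1$ by sending $I_\gamma$ isometrically onto the path $\gamma \subset Y_1$; this is consistent with the gluings and satisfies $\hat\pi = \pi \circ \sigma$.

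Next I would check that $(Y_0, \hat Y_1, \hat\pi, \hat\nu_1)$ is a special admissible edge inverse. Conditions (A) and (B) of Definition \ref{def_admissible_edge_inverse} are immediate from the construction. For (C) one has $\hat\pi_*(\hat\nu_1) = \sum_\gamma \Omega(\gamma)\cdot \nu_0 = \nu_0$ since $\Omega$ is a probability measure. The continuity condition (D) at the two endpoints of $Y_0'$ is vacuous since $\Star(v, Y_0')$ then contains a unique edge; at an interior vertex $v'$ of $Y_0'$, every $\hat w \in \hat\pi^{-1}(v')$ is an interior vertex of a unique $I_\gamma$ and thus has exactly one adjacent edge on each side, each of $\hat\nu_1$-mass $\Omega(\gamma)\cdot m^{-1}$, so the ratio in (D) equals $\Omega(\gamma)$ regardless of which side is chosen. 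The two specialness conditions of Section \ref{eym} hold by design: each component of $\hat\pi^{-1}((0,1))$ is $I_\gamma$ minus its two endpoints, and $\hat\D(\hat\pi(\hat w))(\hat w) = \Omega(\gamma)$ is constant along it.

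It remains to exhibit $\sigma$ as an iterated quotient in the sense of Subsection \ref{subsec_quotients}. The map $\sigma$ is simplicial and an isometry on each edge, and by \eqref{edge} the lifts $\gamma \in \Gamma$ containing a fixed edge $e_{j,1} \subset Y_1$ have total $\Omega$-mass $\mu_1(e_{j,1})/\mu_0(e_j')$; a direct computation then shows that $\sigma_*(\hat\nu_1)$ agrees with $\nu_1$ on each edge of $Y_1$, since $\nu_1$ is a constant multiple of arclength there. To realize $\sigma$ as a composition of the elementary identifications of Subsection \ref{subsec_quotients} I would proceed in two stages: first, for each interior vertex $v'$ of $Y_0'$ and each $w \in \pi^{-1}(v')$, iteratively identify the finitely many vertices in $\hat\pi^{-1}(v') \cap \sigma^{-1}(w)$ (using the vertex-identification variant); second, for each interior edge $e_j'$ of $Y_0'$ and each edge $e_{j,1} \subset \pi^{-1}(e_j')$, iteratively identify the finitely many edges in $\hat\pi^{-1}(e_j') \cap \sigma^{-1}(e_{j,1})$ by the unique isometry compatible with $\hat\pi$. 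After all these steps the combinatorial structure of the quotient agrees with $Y_1$, and the pushforward measure agrees with $\nu_1$ by the mass computation above. The main obstacle I anticipate is the bookkeeping in this last combinatorial step—ensuring that after each elementary identification the remaining pairs are still related by the unique isometry compatible with the current projection, and keeping track of the vertex identifications induced by edge identifications—but since the graphs are finite and one can work locally at a single interior vertex or edge, this reduces to a straightforward finite check.
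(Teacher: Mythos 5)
Your construction is exactly the paper's: build $\hat Y_1$ by gluing copies $I_\gamma$ of $Y_0'$ indexed by the lift space $\Gamma$ along common endpoints, weight each copy by $\Omega(\gamma)$, and use \eqref{edge} to conclude $\sigma_*(\hat\nu_1)=\nu_1$. The only difference is that you carry out in detail two verifications the paper leaves implicit (the admissibility/specialness axioms for $\hat Y_1$ and the realization of $\sigma$ as an iterated elementary quotient), and both checks go through as you describe.
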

\begin{proof}
Regard, $Y_0'$ as a path $\gamma'_0$, and let $\Gamma$ denote the collection of
lifts to $Y_1$, as in Section \ref{mlp}.
 For each $\gamma_1\in \Gamma$ take
a copy $I_{\gamma_1}$ of $Y_0'$ and
form the quotient space $\hat Y_1$ of
$\bigcup_{\gamma_1\in\Gamma}I_{\gamma_1}$  by the equivalence relations
generated as follows: For
all $\gamma_{1,1}, \gamma_{1,2}\in \Gamma$, identify 
$I_{\gamma_{1,1}}(0)$ with $I_{\gamma_{1,2}}(0)$ if and only if
$\gamma_{1,1}(0)=\gamma_{1,2}(0)$.  Similarly, identify
$I_{\gamma_{1,1}}(1)$ with $I_{\gamma_{1,2}}(1)$ if and only if
$\gamma_{1,1}(1)=\gamma_{1,2}(1)$.  Give $\hat Y_1$ the path metric
on components.  There is a natural projection $\sigma:\hat Y_1\to Y_1$.
Put  $\hat\pi=\sigma\circ \pi$. Then the restriction of 
$\sigma$ to $\hat\pi^{-1}(0)\cup  \hat\pi^{-1}(1)$ is $1$-$1$ and
onto $\pi^{-1}(0)\cup  \pi^{-1}(1)$. 

It should be clear that the only remaining point
is to specify the measure $\hat\nu_1$ such that $\sigma_*(\hat\nu_1)=\nu_1$.  
To this end, we use an appropriate
continuous fuzzy section $\hat \D_0$ of $\hat \pi$ defined as follows.
For all $y_0'$ in the interior of 
$Y_0'$, $\gamma_1\in\Gamma$  and $y_1\in \hat\pi^{-1}\cap I_{\gamma_1}$,
we put
\begin{equation}
\label{hatDdef}
\hat\D(y_0')(y_1)=\Omega(\gamma_1)\, ,
\end{equation}
where $\Omega$ is the probability measure on $\Gamma$ defined in (\ref{measdef}).
in 
Then there is a unique extension of $\hat \D_0$ to a continuous 
fuzzy section of $\hat\pi$ on all of $Y_0'$.
It then follows from (\ref{edge}) that 
$\sigma_*(\hat \D_0)=\D_0$, which implies
 $\sigma_*(\hat \nu_1)=\nu_1$.
This suffices to complete the proof.
\end{proof}

\section{Analytic dimension 1}
\label{s:ad}

In this section, we assume familiarity with certain material from 
\cite{cheeger} (see  in particular Sections 2 and 4) including
 the fact that a PI space $(X,d,\mu)$  has a measurable cotangent
bundle $TX^*$. In particular, there is a $\mu$-a.e. well
defined fibre $TX^*_x$. We also, use the Sobolev spaces $H_{1,p}$ and the fact that they
are reflexive.

We show:
\begin{theorem}
\label{ad1}
If  $(X_\infty,d_\infty,\mu_\infty)$ is the measure Gromov-Hausdorff limit of
an admissible inverse system, then
 the dimension of the fibre of the cotangent bundle is 1 $\mu$-a.e..
\end{theorem}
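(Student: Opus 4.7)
The plan is to sandwich the analytic fibre dimension between $1$ and $1$.

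\emph{Lower bound.} First I would exhibit a Lipschitz function $u:X_\infty\to\R$ whose pointwise Lipschitz constant is positive on a set of positive $\mu_\infty$-measure. Fix an edge $e_0\subset X_0$ with arclength parametrization $\sigma:e_0\to\R$, extended to a globally Lipschitz bump on $X_0$, and set $u=\sigma\circ\pi_0^\infty$. The path-lifting argument given in the discussion of Axiom~(2) shows that every point of $(\pi_0^\infty)^{-1}(\mathrm{int}(e_0))$ lies on an isometrically embedded geodesic in $X_\infty$ along which $u$ has unit speed, so $\Lip u\equiv 1$ there. This set has positive $\mu_\infty$-measure by Axiom~(5), forcing analytic dimension $\geq 1$.

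\emph{Upper bound.} For any Lipschitz $f:X_\infty\to\R$, I would show $df=a\,du$ $\mu_\infty$-a.e.\ for some measurable $a$, with $u$ as above. Define $f_i:X_i\to\R$ as the disintegration of $f$ against $\pi_i^\infty$, and let $\tilde f_i=f_i\circ\pi_i^\infty$; in the finite-stage picture of Section~\ref{s:piproof} this is the projective limit of the iterated averagings built from $\I_{\D_j}$. By Lemma~\ref{continuous} together with a projective-limit argument, $f_i$ is Lipschitz with $\Lip f_i\leq\Lip f$ uniformly in $i$. The sequence $\tilde f_i$ converges to $f$ $\mu_\infty$-a.e.\ by the martingale convergence theorem applied to the increasing $\sigma$-algebras $\Sigma_i$ of Section~\ref{prelim}, and by the reflexivity of $H_{1,p}(X_\infty)$ (which holds by Theorem~\ref{t:pi}), a subsequence converges weakly in $H_{1,p}$ with control on minimal upper gradients. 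On each edge $e_i\subset X_i$, $f_i$ and the analogous $u_i$ are 1-variable Lipschitz functions, so $df_i=a_i\,du_i$ pointwise on $e_i$ for some scalar $a_i=a_i(x_i)$. At a regular $x\in X_\infty$ (one whose projections $x_i$ lie in the interior of an edge of $X_i$ for all large $i$), one shows the $a_i(x_i)$ stabilize to a limit $a(x)$, and passing to the limit in $H_{1,p}$ together with Cheeger's Rademacher theorem yields $df(x)=a(x)\,du(x)$.

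\emph{Main obstacle.} The delicate step is passage to the limit: verifying that the edge-wise proportionality constants $a_i$ converge at $\mu_\infty$-a.e.\ regular $x$, and that the weak limit of the minimal upper gradient of $\tilde f_i-a(x)\tilde u_i$ vanishes there. Axiom~(6) is precisely what keeps the push-downs $f_i$ Lipschitz across the vertices of $X_i'$ (via Lemma~\ref{continuous}), allowing the edge-by-edge 1-variable calculus to patch into a global differentiation statement on $X_\infty$. Combined with Cheeger's differentiation theorem applied to the PI space supplied by Theorem~\ref{t:pi}, this yields the upper bound $\leq 1$ and completes the proof.
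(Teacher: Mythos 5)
Your overall strategy---reduce to the $1$-dimensionality of the cotangent bundle of each finite stage $X_i$, then pass to the limit via reflexivity of $H_{1,p}$---is the same as the paper's, and your lower bound is fine. The gap is precisely the step you yourself flag as the ``main obstacle'': you never establish that the edge-wise coefficients $a_i(x_i)$ stabilize at $\mu_\infty$-a.e.\ regular point, nor that the minimal upper gradient of $\tilde f_i-a(x)\tilde u_i$ tends to zero there. This is not a routine verification: martingale convergence controls the functions $\tilde f_i$, not their differentials, and an a.e.\ statement about convergence of the derivative coefficients would itself be a differentiation theorem that nothing in your outline supplies.

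The paper's proof shows that this pointwise identification is unnecessary, and the argument becomes soft once it is dropped. First, every Lipschitz $h:X_\infty\to\R$ is the \emph{uniform} limit of functions $h_i=\tilde h_i\circ \pi_i^\infty$ with $\tilde h_i$ Lipschitz on $X_i$ (with at most a doubled Lipschitz constant); this follows from (\ref{eqn_distance_growth}) alone, so neither the averaging operators $\I_{\D_i}$ nor Lemma \ref{continuous} nor Axiom (6) is needed for the pushdown. Each $d\tilde h_i$ is a bounded measurable multiple of $df_i$ because $df_i$ trivializes $T^*X_i$ (Axioms (1)--(2)), hence each $dh_i$ is a bounded measurable multiple of $du=df_\infty$. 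Second, since the $h_i$ are uniformly Lipschitz and converge uniformly to $h$, reflexivity of $H_{1,p}$ plus Mazur's lemma produce finite convex combinations $\hat h_i$ converging to $h$ in $H_{1,p}$-norm; the submodule $\{a\,du:\ a\in L^p\}$ is closed in $L^p(T^*X_\infty)$ because $\Lip u\equiv 1$ bounds $|du|$ from below, so the limit differential $dh$ again has the form $a\,du$. No limit of the $a_i$ is ever taken. If you replace your stabilization step by this closedness-of-the-rank-one-submodule argument, your proof goes through; as written, it is incomplete at the step you identified.
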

\begin{proof}
Without essential loss of generality, we can assume $X_0=\R$. (Otherwise, we restrict attention
to the inverse image of each individual open edge in $X_0$.)

Let $f:\R\to \R$ denote the identity map viewed as a $1$-Lipschitz function
on $\R$. Let $f_i=f\circ \pi_{i-1}:X_i\to \R$.  From Axioms (1) and (2) in the definition 
of admissible inverse systems, it is clear that $df_i$ defines a trivialization of
the cotangent bundle of $X_i$.
Let $\pi_i^\infty:X_\infty\to X_i$ denote the natural projection and set $f_\infty=f\circ\pi_i^\infty$.

It is easy to see that any $L$-Lipschitz function $h:X_\infty \to \R$, 
is the uniform limit as $i\to\infty$ of $2L$-Lipschitz functions of the form 
$h_i=\tilde h_i\circ \pi_i^\infty$ where $\tilde h_i$ is a $2L$-Lipschitz
function on $X_i$.  It follows that $d\tilde h_i$ is a bounded measurable function times $df_i$ 
and hence, that  $dh_i$ is a bounded measurable function times $df_\infty$.
Clearly, the same holds for any finite linear combination of the $h_i$.

By the reflexivity of the Sobolev space $H_{1,p}$ 
it follows that there is a a sequence $\hat h_i$ of such combinations which converges
to $h$ in $H_{1,p}$. It follows that $df$ is a bounded measurable function times
$df_\infty$, which suffices to complete the proof.
\end{proof}

\section{Bi-Lipschitz nonembedding in Banach spaces with the RNP}
\label{s:blnernp}
Recall that a Banach space $\mathcal V$ is said to have the Radon-Nikodym Property
if every Lipschitz map $f:\R\to\mathcal V$ is differentiable almost everywhere. Separable dual spaces
such as $L_p$ for $1<p<\infty$ and $\ell_1$ have the Radon-Nikodym Property but $L_1$ 
does not.

In this section we show that except in degenerate cases, the Gromov-Hausdorff limit $(X_\infty,d_\infty)$ of
an admissible inverse system does not bilipschitz embed in any Banach spaces with the
Radon-Nikodym property. However it follows directly from the main result of \cite{ckellone} these spaces 
do bilipschitz embed in $L_1$. 

Since by Theorem \ref{t:pi}, $(X_\infty,d_\infty,\mu_\infty)$ is a PI space, according
to \cite{ckdppi},  it will suffice to give conditions
on  $(X_\infty,d_\infty,\mu_\infty)$ which guarantee that for a subset of positive 
$\mu_\infty$ measure, some tangent cone is not bilipschitz to a Euclidean space. According to the following
lemma, in our situation, the only possibility for the dimension of this Euclidean space is $1$; compare
Corollary \ref{cor_top_dim_1}.

Let $(X_i,\pi_i,\mu_i)$ denote an admissible inverse system with subdivision parameter $m\geq 2$.
Let $V_i^{\geq 3}\subset X_i$ denote the set of vertices of $X_i$ with degree
at least $3$.  Given a vertex $v_i\in X_i$, we define the {\bf halfstar of $v_i$ in $X_i$}
to be the union $\halfstar(v_i,X_i)\subset X_i$
of the segments of length $\frac12 m^{-i}$ emanating from $v_i$.

\begin{lemma}
\label{lem_tangent_cone_structure}
Let $(X_i,\pi_i,\mu_i)$ denote an admissible inverse system with subdivision parameter $m\geq 2$.

\begin{enumerate}
\item Let $x_\infty\in X_\infty$ and assume $\pi_i^\infty(x_\infty)$ is a vertex of $X_i$.
Then there is a subset $Y_\infty\subset X_\infty$
which projects isometrically under $\pi_i^\infty$ to the halfstar $\halfstar(\pi_i^\infty(x_\infty),X_i)$.
\vskip2mm

\item Let $x_\infty\in X_\infty$. Then every tangent cone   of $X_\infty$
at $x_\infty$ is 
homeomorphic to $\R$ if and only if every such tangent cone 
is isometric to $\R$. This holds if and only if
$$
\liminf_{i\ra\infty}\; m^i\cdot d_i(\pi_i^\infty(p_\infty),V_i^{\geq 3})=\infty\,.
$$
\item For all $x_\infty\in X_\infty$, every tangent cone at $x_\infty$ has topological dimension $1$. 
\end{enumerate}
\end{lemma}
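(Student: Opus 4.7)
For part (1), I build a compatible sequence of isometric sections $\sigma_j \colon \halfstar(v_i, X_i) \to X_j$ for $j\geq i$, where $v_i := \pi_i^\infty(x_\infty)$, and take $Y_\infty$ to be the inverse limit of $\sigma_j(\halfstar(v_i,X_i))$.  Because $v_i$ is a vertex of $X_i$ and $\pi_{j-1}\colon X_j\to X_{j-1}'$ is simplicial, the compatible point $v_j := \pi_j^\infty(x_\infty)$ is a vertex of $X_j$ for every $j\geq i$.  Starting from $\sigma_i = \id$, the inductive step constructs $\sigma_j$ by lifting each arm of $\halfstar(v_{j-1},X_{j-1})$ edge-by-edge: each arm consists of $m^{j-1-i}$ subdivided edges of $X_{j-1}'$, and the openness of $\pi_{j-1}$ together with the edge-isometry property (Axioms (1)--(2)) lets me lift each sub-edge successively, starting from $v_j$, to produce an isometric lift of the full arm.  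Since $\pi_{j-1}$ is $1$-Lipschitz, $\sigma_j$ is distance-nondecreasing; and because all lifted arms emanate from the common vertex $v_j$ and $d_j$ is a path metric, distances between points on different arms are bounded above by the sum of their arclength distances to $v_j$, which matches the halfstar metric.  Thus $\sigma_j$ is an isometry onto its image, and compatibility of the $\sigma_j$'s yields $Y_\infty$ with $\pi_i^\infty|_{Y_\infty}$ an isometry.

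For part (2), the main tool is (\ref{eqn_distance_growth})/(\ref{containment}), which implies that at scale $r$ the ball $B_r(x_\infty)\subset X_\infty$ is $O(\theta m^{-i})$-Gromov--Hausdorff close to $B_r(\pi_i^\infty(x_\infty))\subset X_i$.  If the liminf condition holds, then for every small $r>0$ I can choose $i$ with $m^{-i}\ll r \ll d_i(\pi_i^\infty(x_\infty),V_i^{\geq 3})$, so the ball in $X_i$ is a branching-free interval of length $2r$; rescaling by $1/r$ and letting $r\to 0$ gives a tangent cone isometric to $\R$.  Conversely, if the liminf is a finite $M<\infty$, then along a subsequence $i_k$ there is a vertex $w_{i_k}\in V_{i_k}^{\geq 3}$ within $Mm^{-i_k}$ of $\pi_{i_k}^\infty(x_\infty)$; choosing $y_\infty\in X_\infty$ with $\pi_{i_k}^\infty(y_\infty)=w_{i_k}$ and applying part (1) produces an isometrically embedded halfstar at $y_\infty$ with at least three arms.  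In the rescaled space $(X_\infty,m^{i_k}d_\infty,x_\infty)$ the point $y_\infty$ sits at bounded distance from $x_\infty$ and is a branching point, so the associated tangent cone is not topologically $\R$.  This yields the second equivalence, and the first is then automatic: a tangent cone homeomorphic to $\R$ has no branching point, which forces the liminf condition and hence isometry with $\R$.

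For part (3), any tangent cone at $x_\infty$ is a pointed Gromov--Hausdorff limit of $(X_\infty,r_k^{-1}d_\infty,x_\infty)$, and by the same comparison it is simultaneously the pointed Gromov--Hausdorff limit of $(X_{i(k)},r_k^{-1}d_{i(k)},\pi_{i(k)}^\infty(x_\infty))$ for a suitable diagonal choice $i(k)\to\infty$.  The open-star cover argument from the proof of Corollary \ref{cor_top_dim_1} then applies verbatim: each $X_{i(k)}$ is covered by open stars of vertices with $1$-dimensional nerve and uniformly small diameter in the rescaled metric, so every compact subset of the tangent cone admits arbitrarily fine open refinements of multiplicity $\leq 2$, giving topological dimension $\leq 1$.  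The lower bound $\geq 1$ follows because the path-lifting argument in the proof of Corollary \ref{cor_top_dim_1} produces a nontrivial isometric arc through $x_\infty$ in $X_\infty$, which yields at least a half-line in every tangent cone.  The main obstacle I anticipate is in part (1), specifically verifying that $\sigma_j$ is distance-preserving rather than merely distance-nondecreasing between points on distinct arms; this is where one must use both that the lifted arms share the common vertex $v_j$ and that $d_j$ is the path metric on $X_j$.
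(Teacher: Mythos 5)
Your proof is correct and follows essentially the same route as the paper's: arm-by-arm path lifting for (1), a diagonal choice of scales plus persistence of lifted tripods under rescaled Gromov--Hausdorff limits for (2), and a reduction to the open-star covering argument of Corollary \ref{cor_top_dim_1} for (3). The extra care you take in (1) to verify that the lifted halfstar maps isometrically (not merely in a distance-nondecreasing way) onto $\halfstar(\pi_i^\infty(x_\infty),X_i)$ fills in a detail the paper leaves implicit.
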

\begin{proof}
(1).  Let $Y_i=\halfstar(\pi_i^\infty(x_\infty),X_i)$.
Given a geodesic path of length $\frac12 m^{-i}$ emanating from $\pi_i^\infty(x_\infty)$, we can
lift it to a path in $X_{i+1}$ starting at $\pi_{i+1}^\infty(x_\infty)$; see the discussion of
Axiom (2) in Section \ref{intro}.
By taking the union of one such lift for each
path, we obtain a lift $Y_{i+1}$ of $Y_i$.  Iterating this produces
a compatible sequence $\{Y_j\subset X_j\}_{j\geq i}$ that projects isometrically
to $\halfstar(\pi_i^\infty(x_\infty),X_i)$ under the projections $\pi_i^j:X_j\ra X_i$. 
Then the inverse limit of $\{Y_j\}$ is the desired subset.

(2).  If $\liminf_{i\ra\infty}\; m^i\cdot d_i(\pi_i^\infty(p_\infty),V_i^{\geq 3})=D<\infty$, 
then using path lifting one gets sequences $i_j\ra \infty$, $\{x_{j,\infty}\}\subset X_\infty$, 
such that $\pi_{i_j}^\infty(x_{j,\infty})\in Y_{i_j}^{\geq 3}$, and $d(x_{j,\infty},p_\infty)<2D m^{-i_j}$.
Then by (1), for every $j$ the rescaled pointed space
$(X_\infty,m^{i_j}d_\infty,p_\infty)$
contains an isometric copy of a ``tripod''  of size $\frac{1}{2}$
within the ball $B(p,2(D+1))\subset \,(X_\infty, m^{i_j}d_\infty)$. (By a tripod
of size $\frac{1}{2}$, we mean $3$ line segments, each of length $\frac{1}{2}$, emanating from 
a single point, equipped with the path metric.)
  Therefore
any pointed Gromov-Hausdorff limit of a subsequence of the sequence
$\{(X_\infty,m^{i_j}d_\infty,
p_\infty)\}_{j}$ will contain an isometric copy of such a tripod, and hence
cannot be homeomorphic to $\R$.

Suppose conversely, that $\liminf_{i\ra\infty}\; m^i\cdot d_i(\pi_i^\infty(p_\infty),V_i^{\geq 3})=\infty$.
 Let
 $D_i=m^i\cdot d_i(\pi_i^\infty(p_\infty),V_i^{\geq 3})$.   Then $D_i\ra \infty$, so we can choose
sequences $\{j_i\}$, $\{R_i\}$ such that: 
\begin{itemize}
\item $j_i-i\ra \infty$ and $R_i\ra \infty$
as $i\ra \infty$.
\vskip1mm

\item
 $B_{m^{-i}R_i}(\pi_{j_i}^\infty(p_\infty))\subset X_{j_i}$ contains only degree $2$ vertices
and is therefore isometric to an interval. 
\end{itemize}
It follows that the pointed sequence
$\{(X_{j_i},m^{i}d_{j_i},\pi_{j_i}^\infty(p_\infty))\}$  converges to $(\R,0)$ in the pointed Gromov-Hausdorff
topology,
 and also to any tangent cone at
$(X_\infty,p_\infty)$, since the projection map 
$\pi_{j_i}^\infty: (X_\infty,m^id_\infty)\to (X_{j_i},m^i d_{j_i})$
%$m^kX_\infty\ra m^kX_{j_i}$
 is a $Cm^{i-j_i}$-Hausdorff approximation.

(3).    It is easy to see that up to rescaling of the metric, a tangent cone %$(W,\star)$
at a point of $X_\infty$
%at $p_\infty$, which is
is itself the
pointed Gromov-Hausdorff limit %of a sequence of rescalings of $X_\infty$, is itself
%the inverse limit (up to rescaling) 
of an admissible inverse system.  Then, by
Corollary \ref{cor_top_dim_1}, it follows that every 
such tangent cone has topological dimension 1.
\end{proof}

Thus  we obtain the following:

\begin{theorem}
\label{blne}
If $\{(X_i,d_i,\mu_i)\}$ is an admissible inverse system, and a positive $\mu_\infty$
measure set of points $x_\infty\in X_\infty$ satisfy
\begin{equation}
\label{eblne}
\liminf_{i\ra\infty}\; m^i\cdot d_i(\pi_i^\infty(x_\infty),V_i^{\geq 3})<\infty\,,
\end{equation}
then $(X_\infty,d_\infty)$ does not bilipschitz embed in any Banach space
with the Radon-Nikodym Property.
\end{theorem}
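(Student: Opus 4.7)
The plan is as follows. By Theorem \ref{t:pi}, $(X_\infty,d_\infty,\mu_\infty)$ is a PI space, so the bilipschitz differentiation result of \cite{ckdppi} applies: if $X_\infty$ were to admit a bilipschitz embedding into a Banach space with the Radon-Nikodym Property, then for $\mu_\infty$-a.e.\ $x_\infty\in X_\infty$, every tangent cone at $x_\infty$ would have to be bilipschitz equivalent to some Euclidean space $\R^N$. So it will suffice to exhibit, on a positive $\mu_\infty$ measure set, a tangent cone which is not bilipschitz to any $\R^N$. In fact, by Lemma \ref{lem_tangent_cone_structure}(3), every tangent cone of $X_\infty$ has topological dimension $1$, which already restricts $N$ to $N=1$; alternatively one could combine Theorem \ref{ad1} with standard facts to reach the same conclusion. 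Thus it remains to show that at a positive measure set of points, some tangent cone is not bilipschitz to $\R$.

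This is exactly what the hypothesis (\ref{eblne}) delivers. Fix any $x_\infty$ satisfying (\ref{eblne}), and choose a sequence $i_j\to\infty$ and vertices $w_{i_j}\in V_{i_j}^{\geq 3}$ with $m^{i_j}\cdot d_{i_j}(\pi_{i_j}^\infty(x_\infty),w_{i_j})\leq D<\infty$. By Lemma \ref{lem_tangent_cone_structure}(1), the halfstar $\halfstar(w_{i_j},X_{i_j})$ lifts isometrically to a subset $\hat T_{i_j}\subset X_\infty$. Since the valence of $w_{i_j}$ is $\geq 3$, the halfstar contains a tripod of size $\tfrac12 m^{-i_j}$, and after rescaling this yields, inside $(X_\infty,m^{i_j}d_\infty,x_\infty)$, an isometric copy of a fixed tripod of size $\tfrac12$ located within the ball of radius $2D+1$ about $x_\infty$. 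By Proposition \ref{gdoubling}, the rescaled sequence is uniformly doubling, so we may pass to a pointed Gromov-Hausdorff convergent subsequence; the limit is a tangent cone $Y$ at $x_\infty$ that contains an isometric tripod. Any bilipschitz map $Y\to\R$ would send this tripod bilipschitzly into $\R$, contradicting the fact that $\R$ has no branch points (three disjoint arcs emanating from a common point cannot be bilipschitzly embedded into $\R$).

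Combining the two steps: any positive measure set of points satisfying (\ref{eblne}) is a positive measure set on which the conclusion of the \cite{ckdppi} differentiation criterion fails, ruling out a bilipschitz embedding into any Banach space with the Radon-Nikodym Property.

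The only genuine technical points are (i) invoking the differentiation theorem of \cite{ckdppi} in the correct contrapositive form and confirming that the relevant Euclidean dimension must be $1$ in our setting, and (ii) justifying the subsequential tangent cone extraction, both of which are immediate from the already-proved Theorem \ref{t:pi}, Theorem \ref{ad1}, and Lemma \ref{lem_tangent_cone_structure}. The main conceptual step, and the reason the hypothesis (\ref{eblne}) is stated in this particular form, is that it precisely encodes the existence of a branching tangent cone.
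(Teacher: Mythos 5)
Your proposal is correct and follows essentially the same route as the paper: invoke Theorem \ref{t:pi} to get the PI property, apply the differentiation criterion of \cite{ckdppi}, and use Lemma \ref{lem_tangent_cone_structure} to produce, at each point satisfying (\ref{eblne}), a tangent cone of topological dimension $1$ containing an isometric tripod, hence not bilipschitz (indeed not homeomorphic) to any $\R^n$. The only difference is that you unpack the subsequential tangent-cone extraction and the halfstar-lifting explicitly, whereas the paper delegates these steps to the proof of Lemma \ref{lem_tangent_cone_structure}(1)--(2).
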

\begin{proof}
By Lemma \ref{lem_tangent_cone_structure}, any tangent cone at such a point $x_\infty$
has topological dimension $1$, and contains an isometric copy of a tripod.  Therefore
it cannot be homeomorphic to $\R^n$ for any $n$.  
Now \cite{ckdppi} implies that $X_\infty$ does not bilipschitz embed in any Banach
space with the Radon-Nikodym Property.
\end{proof}

\begin{remark}
Examples which fail to satisfy (\ref{eblne}) are ``degenerate'' in an obvious sense.
\end{remark}

\section{Higher dimensional inverse systems}
\label{higher}

In this section we consider  higher dimensional inverse systems, where each
$X_i$ is a cube complex.  We would like to point out  
that there are other ways of generalizing
to higher dimension; in particular, one can construct examples of inverse systems
where $X_0$ is the Heisenberg group with the Carnot metric,  the mappings
$\pi_i:X_{i+1}\ra X_i$ are ``branched mappings'', and the inverse limit is a PI space.

We recall that the {\bf star} of a face $C$ in a polyhedral complex $X$ is the union
$\Star(C,X)$ of the faces containing it.  A {\bf gallery} in an $n$-dimensional
polyhedral complex is a sequence $C_0,\ldots,C_N$
of top dimensional faces where $C_{i-1}\cap C_i$ is a codimension $1$ face for all $1\leq i\leq N$.

Fix $n\geq 1$.  We consider an inverse system

\begin{equation}
\label{is_n}
%\cdots\stackrel{\pi_{-i-1}}{\longleftarrow}X_{-i}\stackrel{\pi_{-i}}{\longleftarrow}
%\cdots\stackrel{\pi_{-1}}{\longleftarrow}
X_0\stackrel{\pi_0}{\longleftarrow}\cdots \stackrel{\pi_{i-1}}{\longleftarrow}X_{i}
\stackrel{\pi_{i}}{\longleftarrow}
\cdots\, .
\end{equation}
such that each $X_i$ is a connected cube complex equipped with a  path metric $d_i$ and a measure
$\mu_i$, such that the following conditions hold, for some constants
$2\leq m\in \Z$, $\De$, $\th$, $C\in (0,\infty)$ and
every $i\in \Z$\,:

\begin{enumerate}
\setlength{\itemsep}{.5ex}
\item   (Bounded local metric geometry) $(X_i,d_i)$ is a nonempty connected cube complex that is a union of $n$-dimensional faces isometric to the
$n$-cube $[0,m^{-i}]^n$  (with respect to the path metric $d_i$), such that every link contains at most $\De$ faces.

\item (Simplicial projections are open)  If $X_i'$ denotes the  cube complex obtained by
subdividing each cube of $X_i$ into
$m^n$ subcubes isometric to $[0,m^{-(i+1)}]^n$, then $\pi_i$ induces a map
$\pi_i:(X_{i+1},d_{i+1})\ra (X_i',d_i)$ which is open, cellular (with respect to the cube structure), and an isometry on every face.
\item (Gallery diameter of  fibers is controlled)
For every  $x_i\in X_i'$, any two points in the inverse image $\pi_i^{-1}(x_i)\subset X_{i+1}$ can be joined by a gallery of $n$-cubes $C_0,\ldots,C_N$,
where $N\leq \De$. \item (Bounded local metric measure geometry.)
The measure $\mu_i$ restricts to a constant multiple of Lebesgue measure on each $n$-cube $C_i\subset X_i$, and  $\frac{\mu_i(C_{i,1})}{\mu_i(C_{i,2})}\in [C^{-1},C]$ for any two adjacent $n$-cubes $C_{i,1},C_{i,2}\subset X_i$.
\item (Compatibility with projections) $$
(\pi_i)_*(\mu_{i+1})=\mu_i\, ,
$$
where $(\pi_i)_*(\mu_{i+1})$ denotes the pushforward of $\mu_{i+1}$ under $\pi_i$.
\item (Continuity across codimension $1$ faces) For every pair of
codimension $1$ faces $c_i'\subset X_i'$, and $c_{i+1}\subset \pi_i^{-1}(v_i')$,
the quantity
%\subset X_{i+1}$ %the $\mu_{i+1}$-measure of $\pi_i^{-1}(e_i')\cap \Star(w,X_{i+1})$ is proportional to
%the $\mu_i$ measure of the edge $e_i'\subset \Star(v,X_i')$.  In other words,  for
%$v$ and $w$ as above, the quantity 
\begin{equation}
\label{con12}
\frac{\mu_{i+1}(\pi_i^{-1}(C_i')\cap \Star(c_{i+1},X_{i+1}))}{\mu_i(C_i')}
\end{equation}
is the same for all $n$-cubes $C_i'\subset \Star(c_i',X_i')$. %(Here $\Star(x,G)$ denotes the star
%of a vertex $x$ in a graph $G$, i.e. the union of the edges containing $x$.)
\vskip2mm
\end{enumerate}

The biggest difference between the axioms above and Definition \ref{admissible} is in Axiom (3) 
above, where path diameter has been replaced by gallery diameter.  Note that the gallery diameter
 is the same a
path diameter in the case of graphs.  A bound on the  path diameter would be sufficient
to verify most of the properties that hold for admissible inverse systems
of graphs.  However,
it is not sufficient to recover the main result --- the $(1,1)$-Poincare inequality
as the following example illustrates.

\begin{example}
Consider the $2$-dimensional  inverse system with  subdivision
parameter $m=2$, where:
\begin{itemize}
\item   $X_0$ is the unit square
$[0,1]^2$.
\item  $X_1$ is obtained by taking two copies of the subdivided complex $X_0'$ and gluing 
them together along their central vertices. \item All projection maps $\pi_i:X_{i+1}\ra X_i'$ with $i>0$ 
are isomorphisms.
\end{itemize}
Then $X_\infty$ is isometric to $X_1$, and does not satisfy a $(1,1)$-Poincare inequality;
this is because the gluing locus --- a singleton --- has zero $1$-capacity.
\end{example}

Let $X_\infty$ be the inverse limit of an inverse system satisfying 
(1)-(6) above.
The proof of the Poincar\'e inequality for $X_\infty$
using path families carries over in a straightforward
way, when one uses geodesic paths that intersect each $n$-cube $C$ in a segment parallel to an edge of $C$.
So does the proof using continuous fuzzy sections.

\begin{remark}
What is essential   in Axioms (1) and (4) 
is that they imply that $X_i$ is 
doubling and satisfies a $(1,1)$-Poincar\'e inequality
on scale $m^{-i}$.  In the above example, this doesn't
hold.  However, if Axiom (4) is appropriately modified, then Axiom (3) can be left 
as is.
\end{remark}

\bibliography{piex}
\bibliographystyle{alpha}
%  \addcontentsline{toc}{subsection}{References}
\end{document}